\documentclass[11pt,english]{article}
\usepackage[margin = 2.5cm]{geometry}

\usepackage{amsmath}
\usepackage{amsthm}
\usepackage{amssymb}
\usepackage{mathtools}
\usepackage{graphicx}
\usepackage{xcolor}
\usepackage[hidelinks, bookmarks = false]{hyperref}
\usepackage{enumitem}
\usepackage{bbm}

\usepackage{caption}
\newtheorem{theorem}{Theorem}[section]
\newtheorem{corollary}[theorem]{Corollary}

\newtheorem{lemma}[theorem]{Lemma}

\newtheorem{definition}[theorem]{Definition}

\newtheorem{conjecture}[theorem]{Conjecture}

\newcounter{claimcounter}[theorem]
\newtheorem{claim}[claimcounter]{Claim}

\def\b1K{\mbox{\boldmath $K$}_{-1}}
\def\bK{\mbox{\boldmath $K$}}

\newbox\noforkbox \newdimen\forklinewidth
\forklinewidth=0.3pt \setbox0\hbox{$\textstyle\smile$}
\setbox1\hbox to \wd0{\hfil\vrule width \forklinewidth depth-2pt
 height 10pt \hfil}
\wd1=0 cm \setbox\noforkbox\hbox{\lower 2pt\box1\lower
2pt\box0\relax}

\def\sub'm{\prec_{\bK'}}
\def\grpf #1 #2{{\rm grp}_{#2}(#1)}
\def\fldf #1 #2{{\rm fld}_{#2}(#1)}
\def\dclf #1 #2{{\rm dcl}_{#2}(#1)}
\def\rclf #1 #2{{\rm rcl}_{#2}(#1)}
\def\aclf #1 #2{{\rm acl}_{#2}(#1)}
\def\acff #1 #2{{\rm acf}_{#2}(#1)}
\def\strf #1 #2{{\rm str}_{#2}(#1)}
\def\tclf #1 #2{{\rm acf}_{#2}(#1)}

\def\hbar{{\bf h}}

\date{\today}

\newcommand{\F}{\mathcal{F}}
\newcommand{\M}{\mathcal{M}}
\newcommand{\A}{\mathcal{A}}

\newcommand{\G}{\mathcal{G}}

\newcommand{\D}{\mathcal{D}}

\newcommand{\J}{\mathcal{J}}
\newcommand{\N}{\mathcal{N}}
 
\newcommand{\cP}{\mathcal{P}}

\newcommand{\Z}{\mathcal{Z}}

\newcommand{\cH}{\mathcal{H}}
\newcommand{\cL}{\mathcal{L}}
\newcommand{\Pc}{\mathcal{P}}

\newcommand{\ve}{\varepsilon}

\newcommand{\ex}{\mathrm{ex}}

\usepackage{mathtools}

\newcommand{\floor}[1]{\left\lfloor #1 \right\rfloor}
\newcommand {\tuple}[1]{\langle #1 \rangle}

\newtheorem{prob}{Problem}

\usepackage{xpatch}
\xpatchcmd{\proof}{\itshape}{\normalfont\proofnamefont}{}{}
\newcommand{\proofnamefont}{}
\renewcommand{\proofnamefont}{\bfseries}

\title{Rational exponents near $3/2$}
 \author{Tao Jiang \footnote{Dept. of Mathematics, Miami University, Oxford, OH 45056, USA, {\tt jiangt@miamioh.edu}.  }\and Sean Longbrake \footnote{Dept. of Mathematics, Emory University,  Atlanta, GA 30322, USA, {\tt sean.longbrake@emory.edu}} \and Liana Yepremyan \footnote{Dept. of Mathematics, Emory University,  Atlanta, GA 30322, USA {\tt lyeprem@emory.edu}.  Research is supported by the National Science Foundation grant 2247013: Forbidden and Colored Subgraphs.}}

\begin{document}

\maketitle

\begin{abstract}
Given a graph $H$, the extremal number $\ex(n,H)$ is the maximum number of edges in an $n$-vertex
graph not containing $H$ as a subgraph. The well-known rational exponents conjecture of Erd\H{o}s and Simonovits
states that for any rational $\gamma\in (1,2)$ there exists a single bipartite graph $H$ satisfying $\ex(n,H)=\Theta(n^\gamma)$.
Among other results, the conjecture has been verified for all $\gamma=1+a/b$, where $b>a^2$, by Jiang and Qiu \cite{JQ} and
for all $\gamma=2-a/b$, where $b>\max\{a, (a-1)^2\}$, by Conlon and Janzer \cite{CJ}. 

In this paper, we establish the rational exponents conjecture for many $\gamma$ near the center of the interval, namely, for all $\gamma=1+\frac{rt-1}{2rt+2r}$, where $r,t$ are natural numbers satisfying $t\geq 2$,  $r\geq 2t+3$.
\end{abstract}

\section{Introduction}

Given a family $\cH$ of graphs, a graph $G$ is {\it $\cH$-free} if $G$ does not contain any member of $\cH$ as a subgraph. The {\it extremal number} $\ex(n,\cH)$ of $\cH$ (also called the {\it Tur\'an number} of $\cH$) is the largest number of edges in an $\cH$-free $n$-vertex graph.
When $\cH$ consists of a single graph $H$, we write $\ex(n,H)$ for $\ex(n,\cH)$. The study of the function $\ex(n,\cH)$ is usually referred
to as the {\it Tur\'an problem}, and it is one of the central problems in extremal graph theory. When $\cH$ consists only of non-bipartite graphs, the celebrated
Erd\H{o}s-Stone-Simonovits theorem \cite{ES, E-Stone} determines the asymptotics of $\ex(n,\cH)$. When $\cH$ contains a bipartite graph,
much less is known in general. Nevertheless, in recent years, much progress has been made on the Tur\'an problem for bipartite graphs,
particularly on the so-called {\it rational exponents conjecture} of Erd\H{o}s and Simonovits (see, for example \cite{Erdos-problem}).

\begin{conjecture}[Rational exponents conjecture] \label{conj:ES}
For every rational number $\gamma\in [1,2]$, there exists a graph $H$ with $\ex(n,H)=\Theta(n^\gamma)$.
\end{conjecture}

The biggest breakthrough on the conjecture was made by Bukh and Conlon \cite{BC} who showed that for any rational number $\gamma\in [1,2]$
there exists a finite family $\cH$ of graphs such that $\ex(n,\cH)=\Theta(n^\gamma)$. Following this breakthrough, much progress has been made on
the conjecture in its original form (which asks for a single graph $H$ rather than a family $\cH$) by various authors (see \cite{CJL, Janzer-bipartite, JJM, JMY, JQ, KKL} for instance) and Conjecture \ref{conj:ES} has now been verified for many values of $\gamma$. In particular, a theorem of Jiang and Qiu \cite{JQ} established many exponents near one while a theorem of 
of Conlon and Janzer \cite{CJ} established many exponents near two.

\begin{theorem} [\cite{JQ}] \label{thm:JQ}
For each rational of the form $\gamma=1+a/b$, where $a,b$ are natural numbers with $b>a^2$, there exists a bipartite
graph $H$ with $\ex(n,H)=\Theta(n^\gamma)$.
\end{theorem}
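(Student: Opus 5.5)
The plan follows the Bukh--Conlon framework of \emph{balanced rooted trees}, which supplies the lower bound, together with a direct embedding-counting argument for the upper bound. The latter is where the hypothesis $b>a^2$ enters.

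\textbf{Construction and lower bound.} For a rooted tree $(T,R)$, with $R$ a set of leaves, define $\rho_T=e(T)/(v(T)-|R|)$. Call $(T,R)$ balanced if every subset $S$ of non-root vertices satisfies $e_S/|S|\ge \rho_T$, where $e_S$ is the number of edges incident to $S$. Let $T^p$ denote the union of $p$ copies of $T$ glued along $R$. The theorem of Bukh and Conlon gives, for $p$ large,
\[
\ex(n,T^p)=\Omega\bigl(n^{2-1/\rho_T}\bigr).
\]
So the first step is to choose a balanced tree with $2-1/\rho_T=1+a/b$, that is, $\rho_T=b/(b-a)$. I would use a tree of caterpillar or spider type. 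Its internal non-root part is a path (or a union of paths) on $b-a$ vertices carrying $b$ edges, and it is arranged so that the leaves hanging off the path are the roots. The distribution of root leaves along the path is chosen as evenly as possible, in a "Bresenham'' pattern, so that every sub-interval has density at least $b/(b-a)$. Verifying balancedness then reduces to checking intervals, since connected subsets are the worst case. This step is routine once the pattern is fixed.

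\textbf{Upper bound.} I would show $\ex(n,T^p)=O(n^{1+a/b})$ in the following order.
\begin{enumerate}
\item Reduce to a $K$-almost-regular host graph $G$ with average degree $d=Cn^{a/b}$, using the Erd\H{o}s--Simonovits regularization lemma (or its Jiang--Seiver version).
\item Count embeddings of $T$ greedily along the path, starting from one end. Each non-root vertex costs a factor of about $d$, and each root leaf also contributes a factor of $d$ of choices.
\item Pigeonhole the resulting copies over the possible images of $R$. Comparing $d^{e(T)}$ against $n^{|R|}$ shows that some root tuple $\phi(R)$ is shared by many copies. This is exactly where the choice of $\rho_T$ makes $d=n^{a/b}$ the threshold.
\item From many copies on one root tuple, extract $p$ copies that are internally vertex-disjoint, which gives a $T^p$.
\end{enumerate}

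\textbf{Main obstacle.} The hard step is the extraction of internally disjoint copies. Copies sharing a root tuple can overlap heavily in non-root vertices. These degenerate overlaps must be shown to be rare, by bounding the number of copies that pass through a prescribed partial overlap. Each overlap pattern amounts to embedding a smaller rooted forest in which some internal vertices are fixed. Its count is controlled by the balancedness inequalities, but only with slack. The slack needed to beat the logarithmic and constant losses from almost-regularity amounts to a strict inequality between consecutive densities of the form $\lceil kb/a\rceil$ versus $kb/a$. This is where I expect $b>a^2$ to be needed: it ensures the root leaves are spaced at least $a$ apart, so every proper overlap pattern loses a factor of $n^{\epsilon}$.

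To make this rigorous, I would first try a Janzer-style "good/bad path'' dichotomy. A path whose endpoints have unusually many common extensions is declared bad, and one shows that bad paths form a negligible fraction. The remaining good copies can then be chosen greedily to be disjoint.
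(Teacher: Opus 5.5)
The paper does not prove this statement. It is quoted from Jiang--Qiu~\cite{JQ}, where, as the introduction indicates, it follows from verifying the Bukh--Conlon conjecture for suitable spiders (subdivided stars). Judged on its own merits, your lower-bound half is fine. A balanced tree with $e(T)=b$, $a+1$ root leaves and $b-a$ non-roots has $\rho_T=b/(b-a)$, so Theorem~\ref{lem:BC-lower} gives $\Omega(n^{1+a/b})$. The upper bound, however, has a genuine gap.

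\textbf{The averaging gives only a constant.} Your steps 2--3 show only that copies of $T$ average $n d^{b}/n^{a+1}\approx C^{b}$ per root tuple, which is a constant. Step 4, extracting $p$ internally disjoint copies, is the entire content of the Bukh--Conlon conjecture for $T$.

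\textbf{Balancedness does not bound overlaps.} The justification you offer for step 4 does not work. In an arbitrary $K$-almost-regular $T^p$-free host, the number of extensions of a partial embedding with some internal vertices fixed is bounded only by powers of $Kd$. Balancedness controls expected counts in the random algebraic construction behind the lower bound, not worst-case counts in $G$. Nothing in your counting prevents all $\approx C^{b}$ copies on a root tuple from passing through one common internal vertex (an anchor, in the sense of Lemma~\ref{lem:anchored}). So no ``$n^{\epsilon}$ loss per overlap pattern'' comes for free, and the good/bad-path dichotomy is only named, not carried out.

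\textbf{What is missing.} You need an argument that uses $T^p$-freeness to exclude such concentration. One way, as this paper does for its own trees, has two stages:
\begin{enumerate}
\item Show subtree by subtree, using $T^p$-freeness each time, that heavy copies are negligible, as in Lemmas~\ref{lem:2-paths}, \ref{lem:short-spiders} and~\ref{lem:spider}.
\item For root tuples whose family is not $L$-strong, locate an anchor and do a case analysis on its position. In each case, embed $T^p$ through auxiliary bipartite graphs, as in Section~\ref{sec:main-proof}.
\end{enumerate}
Conditions like $b>a^2$ enter as explicit exponent inequalities in that case analysis (compare $r\ge 2t+3$ here), not through a spacing heuristic.

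\textbf{Choice of tree.} For $a\ge 3$, a caterpillar with roots at distinct spine vertices is not a spider. The Bukh--Conlon upper bound for it is not among the cases the introduction lists as known. You would therefore be committing to an unproved case rather than the spider case on which \cite{JQ} relies.
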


\begin{theorem} [\cite{CJ}] \label{thm:CJ}
For each rational number of the form $\gamma=2-a/b$, where $a,b$ are natural numbers with $b\geq \{a, (a-1)^2\}$,
there exists a bipartite graph $H$ with $\ex(n,H)=\Theta(n^\gamma)$.
\end{theorem}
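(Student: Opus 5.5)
This theorem is cited from Conlon and Janzer \cite{CJ}, so I will not reprove it in full. If I had to, I would follow the standard two-part scheme. The lower bound $\ex(n,H)=\Omega(n^{2-a/b})$ would come from the Bukh--Conlon random algebraic method applied to a balanced rooted tree of density $b/a$. The upper bound would come from a supersaturation and counting argument tailored to the specific $H$.

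\textbf{Construction.} The plan is to take a rooted tree $(T,R)$ with root set $R$. The tree should have $b$ edges and $a$ non-root vertices, and be balanced, meaning every subset $S$ of non-root vertices spans at least $(b/a)|S|$ edges to $R\cup S$. A convenient choice is a path-like or star-like shape whose leaves lie in $R$. The condition $b\ge (a-1)^2$ is exactly what lets one place the $b$ leaf edges so the density condition holds for every subset $S$. Let $H=T^{p}$ be the union of $p$ copies of $T$ sharing $R$, with $p$ large. The Bukh--Conlon random polynomial construction then gives an $n$-vertex $H$-free graph with $\Omega(n^{2-a/b})$ edges. Balancedness of $T$ is exactly the hypothesis their method needs, so this step should be routine.

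\textbf{Upper bound and main obstacle.} The real task is to show $\ex(n,T^p)=O(n^{2-a/b})$ for a single graph, not a family. First I would pass to a $K$-almost-regular host graph via the Erd\H{o}s--Simonovits / Jiang--Seiver regularization lemma. Then I would count homomorphic copies of $T$ extending a fixed embedding of $R$. A Sidorenko-type or dependent-random-choice estimate shows the average number of extensions per root tuple is $\gg p$ once $e(G)\gg n^{2-a/b}$. Many extensions of one tuple must contain $p$ vertex-disjoint ones unless there are many degenerate copies. The degenerate copies are controlled by the balancedness inequality, where the slack provided by $b\ge\max\{a,(a-1)^2\}$ is used. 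Making this degeneracy count work for all subsets simultaneously is the main obstacle. I would handle it the way Conlon--Janzer do, by choosing $T$ so that its only dense substructures are the stars at the roots, which reduces the degenerate-count to a Kővári--Sós--Turán-type estimate.
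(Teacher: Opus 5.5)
Your two-part scheme is the right framework, and it matches how the paper places this result. The lower bound is the Bukh--Conlon random algebraic construction for a power $T^p$ of a balanced rooted tree with $\rho(T)=b/a$, and the upper bound is a matching bound for $T^p$. The paper only quotes the theorem from \cite{CJ}: Theorem~\ref{lem:BC-lower} gives the lower bound, and the Bukh--Conlon upper bound for powers of height-two trees (Theorem~\ref{thm:height-two}) is named as its main ingredient.

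However, you misattribute the hypothesis. The condition $b\ge\max\{a,(a-1)^2\}$ is not what makes a balanced tree of density $b/a$ exist. As the paper notes after Conjecture~\ref{conj:BC}, such trees exist for every rational $\gamma\in(1,2)$ (see \cite{BC}). The restriction is the range of $(a,b)$ for which the \emph{upper bound} for the particular trees realizing $b/a$ could be proved. Two smaller slips: $\rho_T(S)$ counts all edges incident to $S$, including those going to non-root vertices outside $S$; and $b$ is the total number of edges of $T$, not the number of leaf edges.

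The more serious issue is that the upper bound, which is the whole content of the theorem, is not actually argued. At the threshold degree $d\approx n^{1-a/b}$, the number of copies of $T$ per root tuple is on average only a large constant. Nothing in your sketch prevents the extensions of a typical tuple from all passing through one vertex (an anchor in the sense of Lemma~\ref{lem:anchored}), in which case no two of them are internally disjoint.

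Balancedness controls only the \emph{average} number of extensions of leaf tuples of proper subtrees. What is needed is that copies of $T$ containing a \emph{heavy} proper subtree form a negligible fraction of all copies. If balancedness alone gave this, Conjecture~\ref{conj:BC} would follow for every balanced tree, and that conjecture is open. A K\H{o}v\'ari--S\'os--Tur\'an count can at best dispose of heavy stars around a single non-root vertex (compare Lemma~\ref{lem:2-paths}). The difficult heavy configurations are the larger subtrees through the center. Excluding them is precisely where Conlon and Janzer use the specific height-two structure and a restriction on its parameters (compare the hypothesis of Theorem~\ref{thm:height-two}).

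So either you invoke their upper bound as a black box, in which case your sketch adds nothing beyond the citation, or the essential step is missing.
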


In this paper, we are interested in establishing many exponents in the middle of the spectrum, near $3/2$.
Given a graph $H$, let $H'$ denote the graph obtained from $H$ by subdividing each edge of $H$ exactly once.
We note that a theorem of Janzer \cite{Janzer-bipartite} shows $\ex(n,K'_{s,t})=\Theta(n^{\frac{3}{2}-\frac{1}{2s}})$ when $t$ is sufficiently large relative to $s$, which gives infinitely one-parameter family of  exponents near $3/2$.
Our main result establishes an infinite two-parameter family of exponents near $3/2$.

 To describe our result, we need some definitions,
first introduced in \cite{BC}. Let $F$ be a graph and $R$ a proper subset of $V(F)$, called the {\it root set}.
For each nonempty subset $S\subseteq V(F)$, let $e_S$ denote the number of edges of $F$ incident to a vertex in $S$ and let $\rho_F(S)=\frac{e_S}{|S|}$.
Let $\rho(F)=\rho_F(V(F)\setminus R)$. We say that $(F,R)$ (or $F$ if $R$ is clear) is {\it balanced} if $\rho_F(S)\geq \rho(F)$ holds for each nonempty $S\subseteq V(F)\setminus R$. 

Let $F_R^\ell$ denote the graph consisting of $\ell$ labeled copies $F_1,\dots, F_\ell$ of $F$ that
such that for each $v\in R$, the images of $v$ in $F_1,\dots F_\ell$ are the same
and that $F_1,\dots, F_\ell$ are pairwise vertex disjoint outside the common image of $R$.
We call $F_R^\ell$ the {\it $\ell$-th power} of $F$ {\it rooted at $R$}.
When the context is clear, we will drop the subscript $R$. 

Bukh and Conlon \cite{BC} proved the following celebrated result.
\begin{theorem} [\cite{BC}] \label{lem:BC-lower}
For every balanced rooted bipartite graph $(F,R)$ with $\rho(F)>0$, there exists 
a positive integer $\ell_0=\ell_0(F)$ such that for all $\ell\geq \ell_0$, $\ex(n,F_R^\ell)=\Omega(n^{2-\frac{1}{\rho(F)}})$.
\end{theorem}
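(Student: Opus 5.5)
We prove the Bukh--Conlon lower bound with the random algebraic method. Write $s=|V(F)\setminus R|$ and $e=e_{V(F)\setminus R}$, so that $\rho=\rho(F)=e/s$. Fix a large prime power $q$ and a large integer $d$. We will build a random bipartite graph with parts $U=V=\mathbb{F}_q^t$, where $t$ is chosen so that $q^t\approx n$. Take $f_1,\dots,f_s$ to be independent uniformly random polynomials in $\mathbb{F}_q[x_1,\dots,x_t,y_1,\dots,y_t]$ of degree at most $d$ in each of $x$ and $y$. Join $u\in U$ to $v\in V$ when $f_i(u,v)=0$ for all $i$. An edge then appears with probability $q^{-s}$. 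If we choose $q^{t}$ with $q^{t-s}\approx n^{1-1/\rho}$, i.e. $q^{s}\approx n^{1/\rho}$, the expected number of edges is about $n^{2}q^{-s}=n^{2-1/\rho}$. The standard random-polynomial lemma says that for $d$ large relative to the number of points involved, evaluations at distinct points are close to independent and uniform. Hence any fixed configuration of at most $C$ edges on distinct vertex pairs appears with probability exactly $q^{-s\cdot(\#\text{edges})}$.

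Next we control copies of $F$ hanging off a fixed root tuple. Fix an embedding of the roots, $w\in U^{a}\times V^{b}$ according to the sides of $R$. Let $X_w$ be the number of extensions of $w$ to a copy of $F$. Then $\mathbb{E}X_w\approx n^{s}q^{-se}$. Balancedness gives $\rho_F(S)\ge\rho$ for every nonempty $S\subseteq V(F)\setminus R$. From this we will show that $\mathbb{E}[X_w^k]=O_k(1)$ for each fixed $k$. The point is that in a union of $k$ extensions, each new block $S$ of non-root vertices contributes at most $n^{|S|}q^{-e_S}\le n^{|S|(1-\rho\log_n q^{s}/ \cdots)}=O(1)$, by the choice $q^s= n^{1/\rho}$ and $e_S\ge \rho|S|$. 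So bounded moments hold, but they alone do not force $X_w$ to be bounded.

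The key algebraic step, which we expect to be the main obstacle, is to upgrade these moment bounds to a dichotomy. The set of extensions of $w$ is the $\mathbb{F}_q$-points of a variety $W_w$ of bounded complexity, defined by $\le se$ polynomial equations of degree $\le d$ in $\le st$ variables. We will apply the Bukh--Conlon Lang--Weil-type lemma: either $|W_w(\mathbb{F}_q)|\le c_0$, or $|W_w(\mathbb{F}_q)|\ge q/2$, with $c_0$ depending only on $s,t,d,e$. A subtlety is degenerate extensions, which are maps that are not injective. Their number is the points of a union of lower-dimensional pieces, and we will apply the same dichotomy to the injective part, defined by removing the diagonal subvarieties. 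Combining with Markov,
\[
\Pr\bigl[X_w\ge q/2\bigr]\le \frac{\mathbb{E}X_w^k}{(q/2)^k}=O_k(q^{-k}).
\]
Take $\ell_0=c_0+1$. Then the number of root tuples $w$ having at least $\ell_0$ pairwise-disjoint extensions, which requires $X_w>c_0$ and hence $X_w\ge q/2$, has expectation at most $n^{|R|}O(q^{-k})$. With $k$ large this is $o(n^{2-1/\rho})$.

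Finally we delete vertices. For every bad root tuple $w$ we remove one vertex of $w$. The expected number of removed edges is at most the number of bad tuples times the maximum degree, and this is $o(n^{2-1/\rho})$. We control the degree by a second moment or by discarding high-degree vertices, since degrees are concentrated near $nq^{-s}$. The remaining graph has $\Omega(n^{2-1/\rho})$ edges in expectation and contains no $F_R^{\ell}$ for any $\ell\ge\ell_0$. Fixing an outcome that achieves this expectation, and padding $n$ to a nearby value of the form $q^t$ using Bertrand-type density of primes, gives $\ex(n,F_R^\ell)=\Omega(n^{2-1/\rho(F)})$.
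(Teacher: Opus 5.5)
Your proposal is correct and is essentially the Bukh--Conlon random algebraic proof, which the paper cites from \cite{BC} without reproving: a random polynomial graph on $\mathbb{F}_q^t\times\mathbb{F}_q^t$ with $n=q^{e}$ and $s$ polynomials, bounded moments of $X_w$ via balancedness, the Lang--Weil dichotomy applied to the non-degenerate extensions, and deletion of one vertex from each bad root tuple. Only minor fixes are needed. Each new block costs $q^{-s e_S}$, not $q^{-e_S}$. The constants must be chosen in the order $k$, then $d$, then $c_0$, then $\ell_0=c_0+1$. Both side-assignments of the roots should be included in $X_w$. No degree concentration is needed at the end, since the trivial degree bound $n$ already gives $n^{|R|+1}O(q^{-k})=o(n^{2-1/\rho})$ for $k$ large.
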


Motivated by Theorem~\ref{lem:BC-lower}, Bukh and Conlon proposed the following conjecture as a way of
tackling the rational exponents conjecture.

\begin{conjecture}[Bukh-Conlon Conjecture \cite{BC}] \label{conj:BC}
For any balanced rooted tree $(T,R)$ and any natural number $\ell$, we have
\[\ex(n, T^\ell_R)=O_\ell(n^{2-\frac{1}{\rho(T)}}).\]
\end{conjecture}

Indeed, for any balanced rooted tree $(T,R)$ for which Conjecture~\ref{conj:BC} holds, by Theorem~\ref{lem:BC-lower},
$\ex(n,T^\ell_R)=\Theta(n^{2-\frac{1}{\rho(T)}})$ holds for sufficiently large $\ell$. Since
for any rational $\gamma\in (1,2)$, it is easy to construct a balanced rooted tree $(T,R)$ (see \cite{BC}) with $\rho(T)=\frac{1}{2-\gamma}$, Conjecture~\ref{conj:BC} implies Conjecture~\ref{conj:ES}.

Let $r,t$ be positive integers. Let $T_{r,t}$ denote the height two tree obtained from an $r$-star by joining $t$ leaves to each
leaf of it.  Let $R$ be the set of leaves of $T_{r,t}$. For each positive integer $\ell$, let $F_{r,t}^\ell=[T_{r,t}]^\ell_R$.
The following theorem was proved Conlon and Janzer \cite{CJ} (see \cite{JJM} for an earlier and weaker version of the theorem), which forms the most important ingredient of Theorem \ref{thm:CJ}. 

\begin{theorem} \label{thm:height-two}
Let $\ell,r,t$ be positive integers, where $r\geq t+2\geq 3$. Then $\ex(n,F^\ell_{r,t})=O(n^{2-\frac{r+1}{rt+r}})$.
\end{theorem}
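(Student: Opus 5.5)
We would argue by supercounting: an $F^\ell_{r,t}$-free host graph with many edges must carry so many copies of $T_{r,t}$ that $\ell$ of them share a leaf set, and the real work is making those $\ell$ copies disjoint outside the roots.

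\textbf{Reduction and the counting heuristic.} Write $\rho=\rho(T_{r,t})$. The non-root vertices of $T_{r,t}$ are the centre and the $r$ middle vertices, and there are $r+rt$ edges, so $\rho=\frac{rt+r}{r+1}$ and the target exponent is $2-\frac1\rho$. Let $G$ be an $n$-vertex $F^\ell_{r,t}$-free graph with $e(G)\ge Cn^{2-1/\rho}$, where $C=C(\ell,r,t)$ is large. By the standard Erd\H{o}s--Simonovits regularization (in the Jiang--Seiver form), we may pass to a $K$-almost-regular subgraph on $m$ vertices. Its minimum degree is at least $\delta=c\,C m^{1-1/\rho}$ and its maximum degree is at most $K\delta$. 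We then derive a contradiction for $C$ large.

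The heuristic is as follows. A homomorphic copy of $T_{r,t}$ is a choice of a centre, $r$ middle vertices among its neighbours, and $t$ leaves from each middle vertex. So there are about $m\delta^{r+rt}$ such copies, and
\[
m\delta^{r+rt}=C^{r+rt}\,m\cdot m^{(r+rt)-(r+1)}=C^{r+rt}m^{rt}.
\]
Since there are at most $m^{rt}$ possible ordered leaf tuples, a typical leaf tuple is the image of about $C^{r+rt}$ copies. The remaining task is to turn this abundance into $\ell$ copies that share their leaves and are vertex-disjoint outside the leaves.

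\textbf{Proper copies and forcing disjointness.} First restrict to \emph{proper} copies, meaning injective embeddings. Degenerate ones, in which leaves coincide or a leaf equals a middle vertex or the centre, number at most $O(m\delta^{r+rt-1})$, so they are negligible. Next fix a leaf tuple $L$ carrying at least $M$ proper copies, with $M$ a large constant. If no $\ell$ of these copies were pairwise disjoint outside $L$, then by a sunflower or greedy argument many of them would share a fixed centre vertex or a fixed middle vertex. For a fixed middle vertex $y$ used with a fixed $t$-subset of $L$, such copies are controlled by the number of $t$-sets with large common neighbourhood. This suggests the definition: call a $t$-set \emph{heavy} if its common neighbourhood has size at least some constant $D$, and call a copy \emph{bad} if one of its $r$ leaf-$t$-sets is heavy or it can be completed in too many ways. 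The plan is to show that bad copies form only a small fraction of all copies. The remaining good copies on a common leaf set then have only boundedly many choices for each middle vertex and for the centre, so the greedy argument extracts $\ell$ copies pairwise disjoint outside $L$. This is a copy of $F^\ell_{r,t}$, a contradiction.

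\textbf{Main obstacle.} The hard step is bounding the number of bad copies, that is, copies whose leaf $t$-sets or centre--middle configurations are heavy. We would first try a double-counting bound: a heavy $t$-set with common neighbourhood of size $D$ lies in only boundedly many copies relative to its weight. We would combine this with an induction on $\ell$ (using $F^{\ell'}_{r,t}$-freeness for smaller $\ell'$ inside neighbourhoods) to show that the number of copies through heavy configurations is at most $\varepsilon m\delta^{r+rt}$. Here the hypothesis $r\ge t+2$ should be exactly what makes the count work. A heavy configuration saves a factor that must beat the loss from fixing $r$ middle vertices, and the inequality needed compares $\delta^{r}$ against the savings raised to the power $t$. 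We expect this exponent comparison, under $r\ge t+2$, to be the delicate part. If a direct count fails, we would fall back on iterating over the number of heavy branches, say $j$ of the $r$ branches, and bound each case separately, using balancedness of $T_{r,t}$ for the sub-tree formed by the light branches.
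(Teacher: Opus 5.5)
The paper gives no proof of this statement; it is quoted from Conlon and Janzer \cite{CJ}, so your outline has to stand on its own. It has a genuine gap. The decisive step is left open, and the notion of ``bad'' you propose to control is the wrong one.

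At this density the leaf $t$-sets of almost all copies have common neighbourhoods of polynomial size, since $\delta^t/m^{t-1}=\Omega(m^{(r-t)/(rt+r)})$ and $r>t$. Concretely, the copies of $T_{r,t}$ in which some leaf $t$-set has fewer than $D$ common neighbours number at most $O(Dm^{t}\delta^{r+rt-t})$. That is an $O(Dm^{t-1}\delta^{-t})=o(1)$ fraction of all $\Theta(m\delta^{r+rt})$ copies. So under your definition almost every copy is bad, and ``bad copies are rare'' is false, not merely unproven.

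Separately, the proposed induction on $\ell$ has nothing to run on. $F^\ell_{r,t}$-freeness does not imply $F^{\ell'}_{r,t}$-freeness for $\ell'<\ell$; the implication goes the other way.

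The quantity that actually matters is the codegree of the $(t+1)$-set $B_i\cup\{w\}$. In other words, it concerns stars $K_{1,t+1}$ centred at $y_i$ whose leaves are $w$ and the $i$-th leaf block. This is the unsubdivided analogue of the spiders $S_{t+1}$ in Lemma~\ref{lem:spider}, whose leaf vector contains the centre $w$.

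Heavy stars of this kind are easy to discard. Suppose a positive proportion of the $(t+1)$-subsets of some $N(y)$ had at least $v(F^\ell_{r,t})$ common neighbours. Then Lemma~\ref{lem:dense-partite} gives a copy of $\Gamma_{r,t,\ell}$ among them, and hence, greedily, a copy of $F^\ell_{r,t}$.

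Now restrict to light copies and to leaf vectors carrying many copies. Then no vertex can be the image of $w$ in a constant fraction of the copies over a fixed leaf vector $Z$. By Lemma~\ref{lem:anchored}, the only remaining obstruction to finding $\ell$ copies disjoint outside $Z$ is an anchor at a middle vertex. That means a single vertex $y$ plays the role of $y_i$ in a constant fraction of the copies over $Z$. Equivalently, it is a heavy copy of the rooted tree with roots $\{y\}\cup\bigcup_{j\neq i}B_j$ and non-roots $w$ and $y_j$ for $j\neq i$.

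Proving that copies containing such a configuration are negligible in an $F^\ell_{r,t}$-free graph is the substance of \cite{CJ}. Compare the long anchored-subfamily case analysis in Section~\ref{sec:main-proof} for the subdivided analogue. It needs an argument of a different kind from the local one that handles heavy stars. Your outline only flags it as ``the delicate part'' and gives no argument for it, so as written it restates the supersaturation template rather than proving the theorem.
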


Recall that $T'_{r,t}$ is the tree obtained from $T_{r,t}$ by subdividing each edge of $T_{r,t}$ once.
Let $R$ denote the set of leaves of $T'_{r,t}$.  For each positive integer $\ell$, let $H_{r,t}^\ell=[T'_{r,t}]^\ell_R$.
Equivalently, $H^\ell_{r,t}=[F^\ell_{r,t}]'$. Our main result is the following.
\begin{theorem} [Main theorem] \label{thm:main} 
Let $\ell,r,t$ be positive integers, where $t \geq 2, r \geq 2t + 3$. Then, 
\[\ex(n,H^\ell_{r,t})=O(n^{1+\frac{rt-1}{2rt+2r}}).\]
\end{theorem}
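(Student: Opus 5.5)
The plan is to reduce the problem to a graph that is essentially $K$-regular, and then count the homomorphic copies of $T'_{r,t}$ that are rooted at fixed leaf sets. First the exponent. $T'_{r,t}$ has $2(r+rt)$ edges. Its non-root vertices are the center, the $r$ subdivision vertices next to the center, the $r$ middle vertices, and the $rt$ subdivision vertices on the outer edges, for $2r+rt+1$ in total. So $\rho = \frac{2rt+2r}{rt+2r+1}$, and $2-1/\rho = 1+\frac{rt-1}{2rt+2r}$, as required.

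\textbf{Regularization.} By the standard Erd\H{o}s--Simonovits / Jiang--Seiver regularization, we may assume $G$ is $K$-almost-regular on $m$ vertices with minimum degree $\delta\ge c\,m^{\frac{rt-1}{2rt+2r}}$. We may further assume $G$ is bipartite, and we argue toward a contradiction from the assumption that $G$ contains no $H^\ell_{r,t}$.

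\textbf{Structure of the count.} The tree $T'_{r,t}$ consists of the subdivided star $K'_{1,r}$, formed by the center, the $r$ midpoints and the $r$ middle vertices, with a subdivided $t$-star hanging off each middle vertex. I will work in the auxiliary "square" multigraph on one part, where a vertex pair is weighted by its number of common neighbours. Paths of length two in $G$ then correspond to edges of this multigraph, and $T'_{r,t}$ corresponds to $T_{r,t}$ with each edge replaced by a $2$-path. The aim is to adapt the Conlon--Janzer argument behind Theorem~\ref{thm:height-two} to this weighted setting. The first step is to discard light pairs and restrict to pairs whose codegree is at least a suitable threshold $\ell'$; for these pairs, $\ell$ rooted copies extend easily. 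The second step is to define, for each middle vertex $w$, the rooted $t$-sets $L$ of leaves reached by $2$-paths. A leaf set $(L_1,\dots,L_r)$ is called \emph{heavy} if it admits many (at least $C$) embeddings with distinct centers and internally disjoint paths. If a heavy leaf set exists, greedily picking $\ell$ vertex-disjoint extensions gives $H^\ell_{r,t}$. I will bound the number of such degenerate collisions using the degree bounds together with the weaker Theorem~\ref{thm:height-two}, applied to the square graph restricted to heavy pairs.

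\textbf{Counting and obstacle.} By convexity (Jensen over centers, then over middle vertices), the number of homomorphic copies of $T'_{r,t}$ is at least $m\,\delta^{2r+2rt}$. The number of root tuples is at most $m^{rt}$. Since the exponent is chosen to be balanced, $m\delta^{2r+2rt}/m^{rt}\to\infty$, so some root tuple has unboundedly many copies. The main obstacle is showing that most of these homomorphic copies are genuine and pairwise "disjoint enough", that is, ruling out the cases where a subdivision vertex coincides with a leaf or where different branches share midpoints. In the Conlon--Janzer argument this is exactly where the condition $r\ge t+2$ enters. After subdivision, the bound on degenerate copies loses roughly a square root, and this is why we need $r\ge 2t+3$. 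I would handle it by splitting the degenerate configurations according to which vertices coincide. For each configuration I would bound its count by $m^{rt}\delta^{\text{(fewer)}}$, using the balancedness inequality $\rho(S)\ge\rho$ for each subtree $S$, with slack supplied by $r\ge 2t+3$. If this does not close directly, I would fall back on a dependent-random-choice step: first pass to a subgraph in which every relevant pair of leaves has codegree below $\ell'$ in the auxiliary graph, which makes these collisions negligible.
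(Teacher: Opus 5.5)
There is a genuine gap. Your proposal stops exactly where the real work begins. From ``some root tuple has many copies'' nothing follows, because a large family of copies of $T'_{r,t}$ with a common leaf vector $Z$ need not contain $\ell$ members that are disjoint outside $Z$. If it does not, a maximal-disjoint-subfamily and pigeonhole argument (Lemma~\ref{lem:anchored}) shows that a constant fraction of the copies send one fixed non-leaf ($w$, some $x_i$, some $y_i$, or some $v_{i,j}$) to one and the same vertex of $G$. These anchored copies are honest injective embeddings. So splitting ``degenerate configurations according to which vertices coincide'' and appealing to $\rho(S)\ge\rho$ never touches them: balancedness controls only the average number of copies per leaf tuple, not whether those copies are spread out.

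Handling anchors is the entire content of the argument, and the paper needs a separate idea for each anchor position:
\begin{itemize}
\item Anchors at $w$ are excluded only after proving that $L$-heavy copies of $S_{t+1}$ and $S^-_{t+1}$ are rare (Lemmas~\ref{lem:short-spiders} and~\ref{lem:spider}).
\item Anchors at $x_r$ yield a bipartite graph between the images of $w$ and of $x_r$ with enough $2$-paths to apply Lemma~\ref{lem:subdiv-lem1}.
\item Anchors at $y_r$ are split into three regimes according to how many centre-images each $x_r$-image sees. These are handled by Lemma~\ref{lem:spiders-to-graph}, Lemma~\ref{lem:subdiv-lem1}, and the asymmetric Lemma~\ref{lem:subdiv-lem3}.
\item Anchors at $v_{i,j}$ are handled by trimming to nice subtrees and showing that $L$-strong nice subtrees number only $O(nd^{e(D)-1})$ (Lemma~\ref{lem:strong-subtree-lemma}).
\end{itemize}
The hypothesis $r\ge 2t+3$ enters in the very unbalanced $y_r$-anchored regime, where Lemma~\ref{lem:subdiv-lem3} needs $d\gg n^{t/r}$ up to logarithmic factors. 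It does not come from a square-root loss on degenerate copies. Your dependent-random-choice fallback is too unspecified to substitute for any of this.

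The one concrete step you do propose also points the wrong way. Since $H^\ell_{r,t}\subseteq K'_{s,s}$ for a constant $s$, the K\H{o}v\'ari--S\'os--Tur\'an argument of Lemma~\ref{lem:2-paths} shows that $2$-paths whose endpoints have codegree above a constant account for only $O(nd^{2-1/s})$ of the $\Theta(nd^2)$ $2$-paths. But $nd^2\asymp n^{2-\frac{r+1}{rt+r}}$ is exactly the threshold in Theorem~\ref{thm:height-two}, so the square graph restricted to heavy pairs is polynomially too sparse for that theorem to apply. You are therefore forced to work with light $2$-paths, where greedy extension to internally disjoint subdivision paths is unavailable. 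This is why the paper discards heavy $2$-paths and heavy spiders rather than light ones, and then needs the anchored-subfamily machinery above.
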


Note that the $t=1$ case was solved as a special case of the main theorem of \cite{Janzer-bipartite}.
Let us provide some contexts to our main result. 
First, it gives the first large two-parameter infinitely family of rational exponents
near $3/2$, complementing the earlier large family near $1$ and large family near $2$.
Second, it verifies the Bukh-Conlon conjecture for a new large family of trees, namely, subdivisions
of balanced height two tree $T_{r,t}$ with $t \geq 2, r\geq 2t  + 3$. Previously, to the best of our knowledge, 
the Bukh-Conlon conjecture has only been verified for height two trees $T_{r,t}$,
for $r\geq t^3-1$ in \cite{JJM} and for $r\geq t+2\geq 3$
in \cite{CJ}, for certain spiders (i.e subdivisions of stars)
in \cite{CJL}, \cite{Janzer-bipartite}, \cite{JQ}, \cite{KKL}, for balanced double stars and
for $3$-combs in \cite{JMY}, and for $4$-combs in \cite{KKL}, where a {\it $t$-comb} is the tree obtained
by adding a leaf to each vertex of a $t$-vertex path. 
The proof of our main theorem is intricate in nature.
We substantially expand on the existing tools while adding several novel ideas.
First, we expand on the ideas in \cite{BC, JJM} to develop a template that makes extensive and versatile
use of so-called {\it anchored-subfamilies}, which we feel can potentially be further developed into
a systematical approach for establishing the Bukh-Conlon conjecture for more families of rooted trees.
Second, we develop a few embedding lemmas for subdivisions, which combine dependent random choice approach
with anchored subfamily notion and may be viewed as
a dependent random choice type template for subdivisions. We believe these lemmas, particularly developed
for the asymmetric bipartite setting, will be useful for future work on related Tur\'an problems for bipartite graphs.

We organize the rest of the paper as follows. In Section~\ref{sec:notation} we introduce some notation
and prove a few lemmas. In Section~\ref{sec:strong-anchor}, we introduce two crucial notions and prove
a few properties of these two notions. In Section~\ref{sec:subdiv-embed}, we establish a few useful
embedding lemmas for subdivisions. In Section~\ref{sec:main-proof}, we give the proof of the main theorem,
Theorem~\ref{thm:main}.


\section{Notation and bounds on some substructures} \label{sec:notation}

Given a graph $G$, we use $\Delta(G), \delta(G)$ to denote the maximum and minimum degree of $G$, respectively.
Let $K\geq 1$ be a real. We say that a graph $G$ is {\it $K$-almost regular} if $\Delta(G)\leq K \delta(G)$.
Given a vertex $v$ in $G$, let $N_G(v)$ denote the {\it neighborhood} of $v$ in $G$. 
Given a set $S$ of vertices, let $N^*_G(S)=\bigcap_{v\in S} N_G(v)$ denote the {\it common neighborhood} of $S$ in $G$.
When the context is clear, we drop the subscript.
Below, we formally define the trees $T_{r,t}$ and $T'_{r,t}$.

\begin{definition} [$T_{r,t}$ and its subdivision] \label{def:Trt}
{\rm We define the tree $T_{r,t}$ to be the graph with vertex set 
$V=\{w\}\cup\{y_i: \in [r]\}\cup \{z_{i,j}: i\in [r], j\in [t]\}$
and edge set $E=\{wy_i: i\in [r]\}\cup \bigcup_{i=1}^r \{y_iz_{i,j}: j\in [t]\}$. We call $w$ the \emph{center} of $T_{r,t}$. We let $T'_{r,t}$ denote the subdivision of $T_{r,t}$.
For each $i\in [r]$, let $x_i$ denote the vertex used to subdivide the edge $wy_i$.
For each $i\in [r],  j\in [t]$, let $v_{i,j}$ denote the vertex used to subdivde the edge 
$y_i z_{i,j}$. Hence, $E(T'_{r,t})=\{wx_i: i\in [r]\}\cup \{x_iy_i: i\in [i]\}\cup\{y_iv_{ij}: i\in [r], j\in [t]\}\cup \{v_{i,j}z_{i,j}: i\in [r], j\in [t]\}$.
}
\end{definition}

In this paper, we  use a standard tool to reduce a dense host graph to an almost regular induced subgraph with similar relative density. 
Such a reduction tool was first developed by Erd\H{o}s and Simonovits \cite{ES-regular}.
For convenience, we will use the following version.

\begin{lemma}[\cite{Jiang-Seiver},  Proposition 2.7] \label{lem:almost-regular}
Let $\varepsilon, C$ be positive reals where $\varepsilon <1$ and $C\geq 1$. Let $n$ be a positive integer that is sufficiently large as a function of $\varepsilon$. Let $G$ be an $n$-vertex graph with $e(G)\geq Cn^{1+\varepsilon}$. Then $G$ contains a subgraph $G$ on $m\geq n^{\frac{\varepsilon(1-\varepsilon)}{2(1+\varepsilon)}}$ such that $e(G')\geq \frac{2C}{5} m^{1+\varepsilon}$ and $\Delta(G')\geq K\delta(G')$, where $K=20\cdot 2^{\frac{1}{\varepsilon^2+1}}$.
\end{lemma}
Throughout the paper,  a tree $T$ will always be vertex-labeled. 
\begin{definition} [$(T,q)$-linkedness]
{\rm
Let $T$ be a tree whose vertices are labeled $1,\dots, v(T)$. We will automatically view all subtrees of $T$ as labeled as well.
If  $v_1,\dots, v_p$ are the leaves of $T$ in the natural order determined by their labels, then we call $\tuple{v_1,\dots, v_p}$ the 
{\it leaf vector} of $T$.  Let $G$ be a graph.   Let $q$ be a positive integer. Given a $p$-tuple $\tuple{y_1,\dots, y_p}$ of vertices in $G$, we say that $\tuple{y_1,\dots, y_p}$ is
{\it $(T,q)$-linked} in $G$ if there exist $q$ labeled copies of $T$ in $G$ that have  leaf vector $\tuple{y_1,\dots, y_p}$ but are otherwise vertex-disjoint.
}
\end{definition}

\begin{definition}[Heavy, light, and admissible trees]
{\rm Let $L$ be a positive integer. Let $T$ be a tree on $[v(T)]$ and $G$ be a graph. We call a labeled copy $\widetilde{T}$ of $T$ in $G$ {\it $L$-light} if the number of labeled copies of $T$ in $G$ that share the same leaf vector as $\widetilde{T}$ is at most $L^{e(T)^2}$. Otherwise, we say that $F$ is {\it $L$-heavy} in $G$. We say that $\widetilde{T}$ is {\it $L$-admissible} in $G$ if every proper subtree of $\widetilde{T}$ is $L$-light in $G$.
}
\end{definition}

We also recall the  K\H{o}v\'ari-S\'os-Tur\'an theorem \cite{KST}.

\begin{lemma} [\cite{KST}] \label{lem:KST}
Let $s$ be a positive integer. If $H$ is a $K_{s,s}$-free bipartite graph with parts $X,Y$, each of size $m$.
Then $e(H)\leq (s-1)^{1/s} m^{2-1/s}+(s-1)m$.
\end{lemma}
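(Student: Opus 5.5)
The statement immediately above is Lemma~\ref{lem:KST}, the K\H{o}v\'ari--S\'os--Tur\'an bound. I will prove it by the standard double count of pairs $(y,S)$, where $y\in Y$ and $S$ is an $s$-subset of $N(y)\subseteq X$.

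Since $H$ is $K_{s,s}$-free, each $s$-subset $S\subseteq X$ lies in the common neighborhood of at most $s-1$ vertices of $Y$. Hence
\[
\sum_{y\in Y}\binom{d(y)}{s}\le (s-1)\binom{m}{s}.
\]
Let $d=e(H)/m$ be the average degree on the $Y$ side. If $d< s-1$, the bound $e(H)\le (s-1)m$ already holds, so assume $d\ge s-1$.

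The function $x\mapsto\binom{x}{s}$, extended as $\binom{x}{s}=\frac{x(x-1)\cdots(x-s+1)}{s!}$ for $x\ge s-1$ and as $0$ below, is convex. By Jensen's inequality,
\[
m\binom{d}{s}\le (s-1)\binom{m}{s}.
\]
Now use the bounds $\binom{d}{s}\ge \frac{(d-s+1)^s}{s!}$ and $\binom{m}{s}\le \frac{m^s}{s!}$. These give
\[
m\,(d-s+1)^s\le (s-1)\,m^s,
\]
so $d-s+1\le (s-1)^{1/s}m^{1-1/s}$. Multiplying by $m$ yields
\[
e(H)=md\le (s-1)^{1/s}m^{2-1/s}+(s-1)m,
\]
which is the claimed bound.

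The only delicate step is the convexity/Jensen step, because $\binom{x}{s}$ is not convex on all of $\mathbb{R}$. Truncating at $s-1$ fixes this: the truncated function vanishes on $(-\infty,s-1]$ and is convex and nondecreasing beyond it (all roots of the polynomial lie at or below $s-1$), so it is convex overall. The truncation agrees with the true binomial coefficient at every integer degree, including those below $s$, so the counting inequality is unaffected. Everything else is routine.
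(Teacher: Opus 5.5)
Your proof is correct and is the classical K\H{o}v\'ari--S\'os--Tur\'an double count of pairs $(y,S)$ with $S\subseteq N(y)$, $|S|=s$. The paper gives no proof of its own and simply cites \cite{KST}. You handle the two fiddly points properly: the reduction to $d\ge s-1$, and the convexity of the truncated $\binom{x}{s}$ that justifies Jensen, including the edge case $s=1$.
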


We will also make use of the following well-known result of Erd\H{o}s.

\begin{lemma}[\cite{Erdos}]\label{lem:dense-partite}
Let $F$ be a fixed $q$-partite $q$-uniform hypergraph.
There exist positive reals $\delta_F$ and $C_F$ depending in $F$ 
such that $\ex(n,F)\leq C_Fn^{q-\delta_F}$.
\end{lemma}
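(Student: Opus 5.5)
The plan is to reduce to complete $q$-partite hypergraphs and then induct on $q$ with a convexity (Kővári–Sós–Turán style) counting argument. Since $F$ is $q$-partite with $v(F)$ vertices, it is a subhypergraph of the complete $q$-partite $q$-graph $K^{(q)}(s,\dots,s)$ with $s=v(F)$. So it suffices to prove that $\ex(n,K^{(q)}(s,\dots,s))\le C n^{q-\delta}$ for some $\delta=\delta(q,s)>0$ and $C=C(q,s)$. I would aim for $\delta=s^{-(q-1)}$, but any positive value produced by the induction is enough.

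Base case $q=1$. Here $K^{(1)}(s)$ is just $s$ singleton edges, so any $1$-graph without it has at most $s-1\le n^{1-\delta}$ edges for $\delta<1$. (For $q=2$ one could instead invoke Lemma~\ref{lem:KST} directly.)

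Inductive step, $q\ge 2$. Let $H$ be an $n$-vertex $q$-graph with $e(H)\ge Cn^{q-\delta}$.

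\emph{Making $H$ $q$-partite.} Partition $V(H)$ uniformly at random into $V_1,\dots,V_q$. Each edge is transversal with probability $q!/q^q$. Hence some partition keeps a $q$-partite subgraph $H_0$ with $e(H_0)\ge c_q e(H)$ edges.

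\emph{Counting stars.} For each transversal $(q-1)$-tuple $T\in V_1\times\dots\times V_{q-1}$, let $d(T)$ be the number of $v\in V_q$ with $T\cup\{v\}\in H_0$. There are $N\le n^{q-1}$ such tuples, and $\sum_T d(T)=e(H_0)$. By convexity of $x\mapsto\binom{x}{s}$,
\[
\sum_T \binom{d(T)}{s}\;\ge\; N\binom{e(H_0)/N}{s}\;\gtrsim\; n^{q-1}\bigl(n^{1-\delta}\bigr)^s .
\]
Averaging over the at most $\binom{n}{s}\le n^s$ choices of $s$-sets $S\subseteq V_q$, some $S$ lies in the link of at least $c\,n^{q-1-s\delta}$ tuples $T$. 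These tuples form a $(q-1)$-partite $(q-1)$-graph $L_S$ on $V_1\cup\dots\cup V_{q-1}$.

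\emph{Applying induction.} By induction, $\ex(n,K^{(q-1)}(s,\dots,s))\le C' n^{q-1-\delta'}$. Choose $\delta<\delta'/s$, and take $C$ large (and $n$ large) so that $c\,n^{q-1-s\delta}>C'n^{q-1-\delta'}$. Then $L_S$ contains a copy of $K^{(q-1)}(s,\dots,s)$. Because $L_S$ lives on parts disjoint from $V_q\supseteq S$, appending $S$ as the $q$-th part gives $K^{(q)}(s,\dots,s)\subseteq H$. This yields $\delta_q=\delta_{q-1}/s$ with a suitable constant; small $n$ are absorbed into $C_F$.

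The main obstacle is the exponent bookkeeping in the convexity step. When the average degree $e(H_0)/N$ is small, the lower bound $\binom{x}{s}\gtrsim x^s$ fails. This is handled by the assumption $e(H)\ge Cn^{q-\delta}$ with $\delta<1$, which makes the average at least of order $n^{1-\delta}\gg s$. One must also check that the loss from the $n^s$ averaging is beaten by the gain, which is exactly the condition $\delta<\delta'/s$.
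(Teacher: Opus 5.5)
Your argument is correct and is exactly Erdős's original proof from the cited reference; the paper itself gives no proof, only the citation. The steps match: reduce to $K^{(q)}(s,\dots,s)$, induct on $q$ by applying Jensen to $\binom{d(T)}{s}$ over transversal $(q-1)$-tuples, then average over $s$-subsets of the last part, which gives $\delta=s^{-(q-1)}$. Two small details are worth stating explicitly: $\binom{x}{s}$ should be extended by $0$ below $s-1$ so that it is convex, and $N\binom{e/N}{s}$ is nonincreasing in $N$, which is what justifies replacing $N$ by $n^{q-1}$.
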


The following lemma is a special case of Lemma 2.5 in \cite{CJL}.  Since our notation is slightly different, we give a proof for completeness.

\begin{lemma} \label{lem:2-paths}
Let $\varepsilon>0, K$ be real number, $s$ be a positive integer then there exists $d_0=d_0( \varepsilon, s, K)$ such that for any $L\geq (s^2+2s)^{1/4}$ and $d\geq d_0$ the following holds.  If
 $G$ is an $n$-vertex $K_{s,s}'$-free $K$-almost regular graph with minimum degree $d$
then the number of labeled $L$-heavy $2$-paths  in $G$ is at most $\varepsilon nd^2$.
\end{lemma}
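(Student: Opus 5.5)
The plan is to localize the counting at the middle vertex of each $2$-path and then apply Lemma~\ref{lem:KST} inside a single neighborhood. A labeled $2$-path $xyz$ in $G$ has leaf vector $\tuple{x,z}$. The labeled copies of the path with this leaf vector are the paths $xy'z$ with $y'\in N^*(\{x,z\})$. Since $e(T)=2$, such a path is $L$-heavy exactly when $|N^*(\{x,z\})|>L^{4}$, and by the hypothesis $L\geq (s^2+2s)^{1/4}$ this gives $|N^*(\{x,z\})|> s^2+2s$.

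First I define an auxiliary graph $A$ on $V(G)$ in which $xz$ is an edge ($x\neq z$) whenever $|N^*(\{x,z\})|\ge s^2+2s$. Every $L$-heavy labeled $2$-path $xyz$ then satisfies $xz\in E(A)$ and $x,z\in N(y)$.

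The key claim is that $A$ contains no $K_{s,s}$. Suppose $X,Z$ are disjoint $s$-sets with every pair $x\in X$, $z\in Z$ adjacent in $A$. I process the $s^2$ pairs one at a time and pick for each pair a common neighbour in $G$ to serve as its subdivision vertex. This common neighbour must avoid the $2s$ vertices of $X\cup Z$ and the at most $s^2-1$ subdivision vertices already chosen. That forbids at most $s^2+2s-1$ vertices, while at least $s^2+2s$ common neighbours are available, so a valid choice always exists. The result is a copy of $K'_{s,s}$ in $G$, a contradiction.

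Now fix a vertex $y$. The number of $L$-heavy labeled $2$-paths with middle vertex $y$ is at most the number of ordered pairs $(x,z)$ with $x,z\in N(y)$ and $xz\in E(A)$, which is $2e(A[N(y)])$. To bound this, consider the bipartite double cover of $A[N(y)]$: its parts are two copies of $N(y)$, each of size $m=d(y)\le \Delta(G)\le Kd$ (I pad the parts with isolated vertices to size exactly $Kd$ if needed). This double cover is $K_{s,s}$-free. Indeed, a $K_{s,s}$ in it yields sets $X,Z\subseteq N(y)$ of size $s$ with all cross pairs adjacent in $A$. Since $A$ has no loops, $X\cap Z=\emptyset$, so this would be a $K_{s,s}$ in $A$. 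Lemma~\ref{lem:KST} therefore gives
\[
2e(A[N(y)]) \le (s-1)^{1/s}(Kd)^{2-1/s}+(s-1)Kd .
\]
Summing over the $n$ choices of $y$, the total number of $L$-heavy labeled $2$-paths is at most $n\big[(s-1)^{1/s}(Kd)^{2-1/s}+(s-1)Kd\big]$. This is at most $\varepsilon n d^2$ once $d\ge d_0(\varepsilon,s,K)$, because the bracket is $O_{s,K}(d^{2-1/s})=o(d^2)$.

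The main obstacle is the embedding step showing $A$ is $K_{s,s}$-free; this is exactly where the threshold $s^2+2s$, and hence the condition on $L$, is needed. One might first try to bound $e(A)$ globally by $O(n^{2-1/s})$ and multiply by the maximum codegree. That only gives $O(n^{2-1/s}d)$, which is too weak when $d$ is small compared with $n^{1-1/s}$. This is why the proof applies Lemma~\ref{lem:KST} inside each neighborhood $N(y)$ instead, using that $N(y)$ has size at most $Kd$ by almost regularity.
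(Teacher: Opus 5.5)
Your proposal is correct and follows essentially the same route as the paper. The paper defines a graph $F_v$ on $N_G(v)$ whose edges are the pairs $ab$ with $avb$ $L$-heavy; this is a subgraph of your $A[N(v)]$. It shows $F_v$ is $K_{s,s}$-free by the same greedy embedding of $K'_{s,s}$, applies Lemma~\ref{lem:KST} in each neighbourhood of size at most $Kd$, and sums over $v$. Your bipartite double cover is a welcome extra detail, since Lemma~\ref{lem:KST} is stated only for bipartite hosts and the paper applies it to $F_v$ directly.
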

\begin{proof}
For each $v$ in $V(G)$, let $F_v$ be a graph on vertex set $V(F)=N_G(v)$ such that for any pair $a,b\in V(F)$, $ab\in E(F_v)$ if and
only if $avb$ is a heavy $2$-path in $G$. Note that the number of labeled heavy $2$-paths in $G$ with central vertex $v$ is given by
$2e(F_v)$.  Observe that if $F_v$ contains a copy $D$ of $K_{s,s}$ then $G$ contains a copy of $K_{s,s}'$ by greedily replacing every
edge $ab$ of $D$ with a corresponding $2$-path in $G$ joining $a$ to $b$ that avoids the vertices already used, where here we use that each edge $ab$ corresponds to an $L$-heavy $2$-path for $L \geq (s^2 + 2s)^{1/4}$. This, however, would
contradict $G$ being $K'_{s,s}$-free. Hence $F_v$ is $K_{s,s}$-free. By Lemma \ref{lem:KST}, $e(F_v)\leq 2s|V(F)|^{2-1/s}
\leq 2s(Kd)^{2-1/s}$.

Hence, the number of labeled $L$-heavy $2$-paths in $G$ is at most $4sn(Kd)^{2-1/s}\leq \varepsilon nd^2$, as long as $d \geq (4s K^2\ve^{-1})^s$.
\end{proof}

Next, we define a few subgraphs frequently used in our proofs.

\begin{definition} \label{defn:special-graph}
{\rm 
A {\it spider} $S_p$ of $p$ legs of height two is a graph formed from a star $K_{1, p}$ by replacing every edge with a path of length $2$. Formally, $S_p$ has vertex set
$V=\{w,a_1,\dots, a_p, b_1,\dots, b_p\}$ and edge set $E=\{wa_i: i\in [p]\}\cup \{a_ib_i: i\in [p]\}$.
Let $S^-_p$ be obtained from $S_p$ by deleting $b_p$. We will view $\tuple{b_1,\dots, b_{p-1}, a_{p}}$
as the leaf vector of $S^-_p$.}
\end{definition}

\begin{definition}{\rm Given a bipartite graph $H$ with a bipartition $(V_1,V_2)$, we denote the hypergraph on $V_1$ with
edge set $\{N_H(b): b\in V_2\}$ by $\N_{V_1}(H)$ and call it the {\it neighborhood hypergraph} of $H$ induced
on $V_1$. We say that $\N_{V_1}(H)$ is $q$-partite if $V_1$ can be partitioned in $q$ subsets $A_1,\dots, A_q$
such that each member of $\N_{V_1}(H)$ contains at most vertex in each $A_i$. Note that this implies that each vertex in $V_2$ has degree at most $q$ in $H$. }
\end{definition}

\begin{definition}
Let $A, B_1,\dots, B_r$ be disjoint sets of vertices, where $A=\{a_1,\dots, a_\ell\}$ 
and $\forall i\in [r], |B_i|=t$.
Let $\Gamma_{r,t,\ell}$ denote the $(t+1)$-uniform hypergraph with edges $\{a_i\cup B_j: i\in [\ell], j\in [r]\}$.
Note in particular that $\Gamma_{r,t,\ell}$ is a $(t+1)$-partite $(t+1)$-uniform hypergraph.
\end{definition}
 Notice that the $\Gamma_{r, t, \ell}$ is a neighborhood hypergraph for the graph $F_{r, t}^{\ell}$.

\begin{lemma} \label{lem:strong-tuples-embed}
Let $q$ be a positive integer. 
Let $H$ be a bipartite graph with a bipartition $(V_1,V_2)$, where the neighborhood hypergraph
$\N_{V_1}(H)$ is $q$-partite. Let $L$ be positive integer satisfying $L\geq v(H')$.
Let $G$ be a graph. Let $\F$ be a collection of $q$-tuples of vertices in $G$ that are $(S_q,L)$-linked in $G$. 
If  $\F$, viewed as a $q$-uniform hypergraph, contains a copy of $\N_{V_1}(H)$
then $G$ contains $H'$.
\end{lemma}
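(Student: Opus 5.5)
We embed $H'$ greedily. The copy of $\N_{V_1}(H)$ inside $\F$ fixes where the vertices of $V_1$ go, and the $(S_q,L)$-linkedness of the tuples lets us route each vertex of $V_2$ through a spider. Concretely, fix an embedding $\phi$ of $\N_{V_1}(H)$ into $\F$. This is an injective map $\phi: V_1\to V(G)$ under which each hyperedge $N_H(b)$, $b\in V_2$, goes to a $q$-tuple $\phi(N_H(b))\in\F$. We send each $a\in V_1$ to $\phi(a)$. The subdivision $H'$ then needs three things for every $b\in V_2$: an image $w_b$ of $b$, an image of each subdivision vertex on an edge $ab$, and disjointness of all these images.

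For a vertex $b$, the $q$-partiteness of $\N_{V_1}(H)$ means $|N_H(b)|\le q$. Moreover, since the hyperedge $N_H(b)$ is embedded as a $q$-tuple of $\F$, we may take $|N_H(b)|=q$. If hyperedges of smaller size are allowed, one should check how $\F$ "contains" them; I would handle this by noting that a sub-spider of an $S_q$ copy still works, using only the legs indexed by $N_H(b)$. The tuple $\phi(N_H(b))=\langle y_1,\dots,y_q\rangle$ is $(S_q,L)$-linked, so there are $L$ copies of $S_q$ with leaf vector $\langle y_1,\dots,y_q\rangle$ that are pairwise disjoint apart from these leaves. In each copy, the center $w$ serves as the image of $b$, and $a_i$ serves as the subdivision vertex between $b$ and the $i$-th neighbor. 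This realizes the star of $b$ in $H'$ together with its subdivided edges.

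We process the vertices $b\in V_2$ one at a time. Let $U$ be the set of vertices already used: $\phi(V_1)$ together with the non-leaf vertices of the spiders chosen so far. Then $|U|<v(H')$ throughout. Each used vertex lies in at most one of the $L$ internally disjoint spiders for the current tuple, since those spiders meet only in the leaves $y_1,\dots,y_q$, which are already in $\phi(V_1)$. The spiders are labeled with distinct leaves, so no spider has an internal vertex equal to one of its own leaves. Hence at most $|U|<v(H')\le L$ of the $L$ spiders are blocked, and some spider has all of its $q+1$ internal vertices outside $U$. Choosing it keeps everything disjoint.

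After all of $V_2$ is processed, the images of $V_1$, the vertices $w_b$, and the leg vertices are all distinct. Every edge $ab$ of $H$ becomes a path $\phi(a)\,a_i\,w_b$ of length two, so $G$ contains $H'$.

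The main obstacle is this counting, which must be done against vertices of $\phi(V_1)$ as well as earlier spiders, since an internal vertex of a spider may coincide with $\phi(a')$ for some $a'$ outside the current edge. The bound $|U|<v(H')$ covers both cases, and this is exactly why the hypothesis is $L\ge v(H')$.
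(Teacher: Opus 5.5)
Your proof is correct and is essentially the paper's argument: fix the embedded copy of $\N_{V_1}(H)$, then choose for each $b\in V_2$ one of the $L$ internally disjoint copies of $S_q$ on its linked tuple whose non-leaves avoid everything already used, which is possible since $L\ge v(H')$. Your explicit inclusion of $\phi(V_1)$ in the forbidden set is worthwhile extra care, because the paper's one-line version only requires the non-leaf sets of the chosen spiders to be pairwise disjoint and leaves this point implicit.
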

\begin{proof} Suppose $V_2=\{y_1,\dots, y_m\}$.  For each $i\in [m]$, let $e_i=N_{V_1}(y_i)$.
By our assumption, there is a copy $N^*$ of $\N_{V_1}(H)$
in $G$ whose edges are $(S_q,L)$-linked tuples. For each $i\in [m]$, let $f_i$ denote the image of $e_i$ in $N^*$.
By our assumption, for each $i\in [m]$, there are $L$ internally disjoint copies of $S_q$ with leaf set $f_i$.
Since $L\geq v(H')$, one can find $m$ copies of $S_q$: $T_1,\dots, T_m$ such that
for each $i\in [m]$, the leaf set of $T_i$ is $f_i$ and that the sets of non-leaves of the $T_i$'s
are pairwise disjoint. Now $\bigcup_{i=1}^m T_i$ forms a copy of $H'$ in $G$.

\end{proof}

Next, similar to Lemma~\ref{lem:2-paths}, we show that an almost regular  $H_{r, t}^{\ell}$-free graph does not
contain too many $L$-heavy spiders $S_{t + 1}$ or too many $L$-heavy $S^-_{t+1}$. We prove the statement
about $S^-_{t+1}$ first, since it will be used to prove the statement about $S_{t+1}$.

\begin{definition} [$D$-shadows] \label{defn:D-shadow}
{\rm Let $T$ be a labeled tree and $D$ a proper subtree. 
Given a family $\F$ of labeled copies of $T$ and $F\in \F$, we
call the image of $D$ in $F$ the {\it $D$-shadow of $F$} and denote it by $F[D]$.
We let $\partial_D(\F)=\{F[D]: F\in \F\}$ and call it the {\it $D$-shadow} of $\F$.
}
\end{definition}

The next lemma about $S^-_{t+1}$ is a somewhat involved and uses a non-standard averaging 
together with linkage arguments.

\begin{lemma} \label{lem:short-spiders}
Let $r,t,\ell$ be fixed positive integers.
Let $\varepsilon>0$ be a real. There exists a positive integer $L_1 = L_1(r,t, \ell)$ such
that for all $L\geq L_1$  and all sufficiently large $n$ the following holds.
There exists a $d_0 = d_0(\ve, r, t, L)$ such that for any $n$-vertex $H_{r,t}^\ell$-free $K$-almost regular graph $G$ with minimum degree $d \geq d_0$, the number of $L$-heavy copies of $S_{t+1}^-$ in $G$ is at most $\varepsilon nd^{2t+1}$.
\end{lemma}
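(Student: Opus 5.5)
Write $S=S^-_{t+1}$ with center $w$, middle vertices $a_1,\dots,a_{t+1}$, and leaf vector $\langle b_1,\dots,b_t,a_{t+1}\rangle$. The plan is to argue by contradiction. Assume $G$ has more than $\varepsilon nd^{2t+1}$ labeled $L$-heavy copies of $S$. From this family I want to extract an embedding of $H^\ell_{r,t}$.

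\textbf{Structure of a heavy copy.} By Lemma~\ref{lem:2-paths}, applied with $s$ large enough that $K'_{s,s}\subseteq H^\ell_{r,t}$, all but $\frac{\varepsilon}{2}nd^{2t+1}$ copies of $S$ have every leg $wa_ib_i$ $L$-light. The count here is at most $\varepsilon' nd^2\cdot (Kd)^{2t-1}$ after extending a heavy $2$-path. Being $L$-heavy with all legs light then forces many copies over the same leaf vector that differ in the center $w$. Precisely, given a leaf vector $\langle b_1,\dots,b_t,a_{t+1}\rangle$ with more than $L^{e(S)^2}$ copies, the following holds. Since each $b_i$ together with a fixed $w$ allows only boundedly many $a_i$ (light paths), a sunflower/greedy argument produces $L$ copies whose non-leaf vertices are pairwise disjoint. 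So each heavy copy has a leaf vector that is $(S,L)$-linked.

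\textbf{Averaging.} This is the non-standard averaging step. Fix the vertex $a_{t+1}=:y$, which plays the role of $y_i$ in $T'_{r,t}$. Write $x=w$ and take the $b_j$ as the $z_{i,j}$. Each linked leaf vector $\langle b_1,\dots,b_t,y\rangle$ yields $L$ internally disjoint copies of the path $y\,x\,a_j\,b_j$ fans. These are exactly the branch of $T'_{r,t}$ hanging from $x_i$: $x_i y_i$ gets replaced by the edge $y w$, and $w$ plays $x_i$, so the subdivided $y_iv_{ij}z_{ij}$ becomes $w a_j b_j$. Hmm — the roles must be matched carefully. Actually $S^-_{t+1}$ rooted at $a_{t+1}$ is $x_i$–$y_i$–$v_{ij}$–$z_{ij}$ with $a_{t+1}=x_i$, $w=y_i$, $a_j=v_{ij}$, $b_j=z_{ij}$. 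So a linked tuple gives $L$ disjoint copies of the branch of $T'_{r,t}$ at $x_i$ with prescribed leaves $z_{i,1..t}$ and attachment vertex $x_i$.

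For each vertex $u$ (playing $w$ of $T'_{r,t}$), consider its neighbors $x\in N(u)$. Define a $(t+1)$-uniform hypergraph $\mathcal{F}$ whose edges are $\{x\}\cup\{b_1,\dots,b_t\}$ for linked tuples. By averaging, there are $\Omega(\varepsilon nd^{2t+1})$ edges. I then pick $u$ so that the edges with $x\in N(u)$ number $\Omega(\varepsilon d^{2t+2})$ on a vertex set of size $O(d^{t+1})$-scale. More precisely, restrict to the $x\in N(u)$ and to the $b$'s at distance $3$, which gives a set of size $O(d^3)$.

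\textbf{Embedding.} Obtain a copy of $\Gamma_{r,t,\ell}$ inside this restricted family via Lemma~\ref{lem:dense-partite}. After making the family $(t+1)$-partite by a random partition, the required density is $|V|^{t+1-\delta}$, and this must be beaten. This is the main obstacle: the naive density $d^{2t+2}$ versus vertex set $d^{3(t+1)}$ fails. I would therefore refine by fixing also the $a$-layer, i.e.\ counting relative to $\sum_u\prod$ degrees, and use almost regularity to compare. Once the copy is found, its edges $\{x_i\}\cup B_j$ correspond to $\ell$ copies (indexed by $A$) and $r$ branches. These combine with the edges $ux$, and the linkedness ($L\geq v(H^\ell_{r,t})$) is used greedily as in Lemma~\ref{lem:strong-tuples-embed} to make all internal vertices disjoint. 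This produces $H^\ell_{r,t}$, a contradiction.

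If the density count genuinely fails, I expect the fix to be an iterated averaging. That iteration would pass to tuples whose sub-spiders are also light, reducing the count to admissible copies, which is what the admissibility definition is set up for.
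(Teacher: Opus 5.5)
\textbf{The embedding step does not yield $H^\ell_{r,t}$, and the counting step is missing.} Lemma~\ref{lem:strong-tuples-embed} applied with $\Gamma_{r,t,\ell}$ needs $(t+1)$-tuples that are $(S_{t+1},L)$-linked. That is, every vertex of the tuple, including the $A$-vertex, must be reached from the spider centre by a path of length two. Your tuples $\{a_{t+1}\}\cup\{b_1,\dots,b_t\}$ are only $(S^-_{t+1},L)$-linked, with the short leg at $x=a_{t+1}$.

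In a copy of $\Gamma_{r,t,\ell}$, each $A$-vertex $x^k$ lies in the $r$ edges $\{x^k\}\cup B_j$, so it is joined by \emph{single edges} to $r$ spider centres. The union is a copy of $F^\ell_{r,t}$ in which only the edges $y_iz_{i,j}$ are subdivided; the $r\ell$ subdivision vertices $x_i$ are missing. So $H^\ell_{r,t}$-freeness gives no contradiction. Your own stated role assignment ($a_{t+1}=x_i$, $w=y_i$) would instead need $r\ell$ distinct attachment vertices and $\ell$ distinct centres $u^{(k)}$, whereas you fix a single $u$. The edges $ux$ cannot supply the missing subdivision vertices. Independently, as you note yourself, the density in your localisation tends to zero. ``Fix the $a$-layer'' or ``iterated averaging'' does not replace an actual count.

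The missing idea, which is how the paper argues, is to upgrade to $S_{t+1}$-linkage of $\tuple{b_1,\dots,b_t,w}$, turning the centre $w$ into a leaf. The steps are as follows.

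First, discard copies containing \emph{any} $L$-heavy $2$-path. You need lightness of $a_iwa_{t+1}$ too, not just the legs; otherwise a vertex in the role of $a_i$ can lie in order-$d$ copies with a given leaf vector, and your sunflower count breaks.

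Second, clean so that every surviving leaf vector carries at least $\frac12 L^{(2t+1)^2}$ surviving copies (heaviness alone does not give this). Also require every $F[D]$, where $D=S^-_{t+1}-a_{t+1}$, to extend to at least $L$ survivors. Then surviving leaf vectors are $(S^-_{t+1},L)$-linked.

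Third, for a survivor with centre $w'$, run over at least $L$ choices of $a_{t+1}\in N(w')$ extending $F[D]$. For each, take another $S^-_{t+1}$ with leaves $\tuple{b'_1,\dots,b'_t,a_{t+1}}$ avoiding $w'$ and the vertices used so far, and append the edge $a_{t+1}w'$. This gives $\floor{L/(t+2)}$ internally disjoint $S_{t+1}$'s with leaf vector $\tuple{b'_1,\dots,b'_t,w'}$.

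The count then localises correctly. Fix $v$ that is the image of $a_1$ in at least $(\varepsilon/4K)d^{2t}$ of the surviving $D$-shadows. For each image $u$ of $b_1$, the remainder is an $S_{t-1}$ centred in $N(v)$. So there are at most $(Kd)^{(2t-1)r}$ choices of $\tuple{T_1,\dots,T_r}$, while convexity gives $\Omega(d^{(2t-1)r+1})$ tuples $\tuple{u,T_1,\dots,T_r}$ with the $T_j$ disjoint. Pigeonhole yields $u_1,\dots,u_\ell$ sharing $T_1,\dots,T_r$. The tuples $\tuple{u_i,b^j_2,\dots,b^j_t,w^j}$ then form a $\Gamma_{r,t,\ell}$ whose edges are $(S_{t+1})$-linked.

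So the images of $b_1$ play the $\ell$ centres of $H^\ell_{r,t}$, and $\{b^j_2,\dots,b^j_t,w^j\}$ play the shared roots $z_{j,\cdot}$. This is not your assignment $a_{t+1}=x_i$, and no appeal to Lemma~\ref{lem:dense-partite} is needed.
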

\begin{proof}
Let $\F$ denote the family of $L$-heavy copies of $S_{t+1}^-$ in $G$.
Suppose for contradiction that $|\F|>\varepsilon nd^{2t+1}$.
Let $M$ denote the number of different leaf vectors involved in members of $\F$.
Since members of $\F$ are $L$-heavy in $G$, we have 
$|\F|\geq M\cdot L^{[e(S_{t+1}^-))^2]}= ML^{(2t+1)^2}$.
Hence, 
\begin{equation}\label{eq:leaf-vector-count}
M\leq |\F|/L^{(2t+1)^2}.
\end{equation}

Let $\F_1$ denote the subfamily of members of $\F$ that contain a $L$-heavy $2$-path.
Let $s=\ell r (t+1)$. Clearly, $G$ is $K'_{s,s}$-free.
By Lemma \ref{lem:2-paths},
by choosing $L_1 \geq \left((\ell(1 + r) + rt)^2 + 2(\ell(1 + r) + rt)\right)^{1/4}$, we can ensure that the number of $L$-heavy $2$-paths in $G$ is at most 
$(\varepsilon/2K^{2t-1})nd^2$. Since $\Delta(G)\leq Kd$, we then have
$|\F_1|\leq (\varepsilon/4K^{2t-1})nd^2)(Kd)^{2t-1}\leq (\varepsilon/4) nd^{2t+1}\leq |\F|/4$.

Let $\F_2=\F\setminus \F_1$. Then $|\F_2|> (3/4)|\F|$. 
By definition, members of $\F_2$ do not contain any $L$-heavy $2$-path in $G$. 

Let $D$ denote the $t$-legged spider obtained from $S_{t+1}^-$ by deleting the leg of length one.
We next clean $\F_2$ by the following two rules
\begin{enumerate}
\item Whenever there is a member $F$ of $\F_2$ such that $F[D]$ is the $D$-shadow of 
fewer than $L$ remaining members of $\F_2$ then delete all such members of $\F_2$.

\item Whenever there is a member $F$ of $\F_2$ such that its leaf vector is the leaf vector of fewer than
$(1/2)L^{(2t+1)^2}$ remaining members of $\F_2$ then remove all the members of $\F_2$ with this
leaf vector.
\end{enumerate}

We continue until no more removals can be performed. Let $\F_3$ denote the remaining subfamily of $\F_2$.
Since there are at most $n(Kd)^{2t}$ different choices of copies of $D$ in $G$, the number of members removed 
by type 1 removals is at most $Ln(Kd)^{2t}$. 
The number of members removed by type 2 removals is at most $M(1/2) L^{(2t+1)^2}\leq |\F|/2$,
where we used \eqref{eq:leaf-vector-count}. Hence,
\[|\F_3|\geq |\F_2|-Ln(Kd)^{2t}-|\F|/2\geq (1/4)|\F|\geq (\varepsilon/4)nd^{2t+1},\]
for $n$  sufficiently large and $d \geq 4L \ve^{-1}$. 
By our assumption, for each $F\in \F_3$ the number of members of $\F_3$ whose leaf vector
matches the leaf vector $L(F)$ of $F$ is at least $(1/2)L^{(2t+1)^2}$. 

\begin{claim} \label{claim:LF-linked}
$L(F)$ is $(S_{t+1}^-,L)$-linked in $\F_3$. 
\end{claim}\begin{proof}
Let $\cH$ the family of copies of $S_{t+1}^-$ that have leaf vector $L(F)$.
Take a maximal collection $\M$ of vertex disjoint members of $\cH$. Let $M = \bigcup_{F \in \M} V(F) \setminus L(F)$. Since $\M$ is maximal, we have that every $F$ in $\cH$ uses some vertex in $M$. We will use this to give a lower bound on $|M|$, and use it to show $|\M|\geq L$.

For each $v \in M$, since each member of $\F_3$ uses only $L$-light $2$-paths, there are at most $L^{4t}$ copies of $S_{t + 1}^-$ in $\cH$ maping $w$ to $v$. Similarly, the number of members of $\cH$ which map $a_i$ to $v$ is at most $L^{4t}$.
Thus, $|\F_3| \leq |M| (t + 1) L^{4t}$. Since $|\M|=|M|/2(t+1)$, we have $|\M| \geq \frac{1}{2(t + 1)^2} L^{(2t + 1)^2 - 4t} \geq L$. 
\end{proof}

Next, we prove a few additional claims about $\F_3$.
Recall that $\partial_D(\F_3)=\{F[D]: F\in \F_3\}$.
Since each member of $\partial_D(\F_3)$
is contained in at most $Kd$ members of $\F_3$,  we have 

\begin{claim}
$|\partial_D(\F_3)|\geq |\F_3|/Kd\geq (\varepsilon/4K) nd^{2t}$.

\end{claim}

Recall that by the cleaning rules used to obtain $\F_3$ from $\F_2$, for each $F\in \F_3$,
$F[D]$ is the $D$-shadow of at least $L$ members of $\F_3$.

\begin{claim} For each $F\in \F_3$, the image of the tuple $\tuple{b_1,\dots, b_t, w}$ 
is $(S_{t+1},\floor{\frac{L}{t+2}})$-linked in $G$.

\end{claim}
\begin{proof} 
Let $b'_1,\dots, b'_t,w'$ denote the images of $b_1,\dots, b_t,w$ in $F$, respectively.
Note that $F[D]$ is  a $t$-legged spider with center $w'$ and leaf vector $\tuple{b'_1,\dots, b'_t}$.
Let $a^1_{t+1}$ denote the image of $a_{t+1}$ in $F$. We wish to show that
$\tuple{b'_1,\dots, b'_t,w'}$ is  $(S_{t+1},\floor{\frac{L}{t+2}})$-linked in $G$.

By Claim~\ref{claim:LF-linked}, $\tuple{b_1',\dots, b_t', a_{t+1}^1}$ is
$(S_{t+1}^-,L)$-linked. We can thus find a copy $T_1$ of $S_{t+1}^-$ whose
leaf vector is $\tuple{b_1',\dots, b_t', a_{t+1}^1}$ such that $V(T_1)$ avoids $w'$.
Now, $T'_1:=T_1\cup w'a_{t+1}^1$ is
a copy of $S_{t+1}$ with leaf vector $\tuple{b_1',\dots, b_t', w'}$,
By the earlier remark, $F[D]$ is the $D$-shadow of at least $L$ members of $\F_3$.
So, we find another member $F_2$ of $\F_3$ with $F_2[D]=F[D]$ but that $F_2$ maps $a_{t+1}$ to 
a vertex $a_{t+1}^2$ outside $V(T'_1)$.
We are able to do so since $L$ is sufficiently large in terms of $r, t, \ell$.
Now, since $\tuple{b_1', \dots, b_t', a_{t+1}^2}$ is the leaf vector of $F_2\in \F_3$,
by Claim~\ref{claim:LF-linked}, it is $(S_{t+1}^-,L)$-linked.
We can thus find a copy $T_2$ of $S_{t+1}^-$ whose
leaf vector is $\tuple{b_1', \dots, b_t', a_{t+1}^2}$ such that $V(T_2)$ share
only $b_1', \dots, b_t'$ in common with $T'_1$.
Now, $T'_2:=T_2\cup w' a_{t+1}^2$ is
a second copy of $S_{t+1}$ with leaf vector $\tuple{b_1',\dots, b_t', w'}$ that is
internally disjoint from $T'_1$.  We can continue like this to find at least $\floor{\frac{L}{t+2}}$ 
internally disjoint copies of $S_{t+1}$ with leaf vector $\tuple{b_1',\dots, b_t', w'}$.
\end{proof}

Since $|\partial_D(\F_3)|\geq (\varepsilon/4K) n d^{2t}$, by averaging there exists some vertex $v$ such that the number of
members of $\partial_D(\F_3)$ that maps $a_1$ to $v$ is at least $|\partial_D(\F_3)|/n\geq (\varepsilon/4K) d^{2t}$.
Let $\D^*$ denote the subfamily of the members of $\partial_D(\F_3)$ that map $a_1$ to $v$.
Let $B_1$ denote the set of images of $b_1$ in the members of $\D^*$.  Then $|B_1|\leq Kd$. 
For each $u\in B_1$, let $\D^*_u$ denote the subfamily of members of $\D^*$ that
map $b_1$ to $u$ and let $\cL(u)=\{T-\{v,u\}: T\in \D^*_u\}$.
Then $|\cL(u)|=|\D^*_u|$. Note that each member of $\cL(u)$ is a copy of $S_{t-1}$.
Since $\Delta(G)\leq Kd$, each vertex of $G$ lies in at most $(Kd)^{2t-2}$ members of
$\cL(u)$. Hence, for each $u$ with $|\cL(u)|\geq 2rt(Kd)^{2t-2}$, the number of tuples
$\tuple{T_1,\dots, T_r}$, where $T_1,\dots, T_r$ are vertex disjoint members of $\cL(u)$
is at least $(|\cL(u)|/2)^r$. Let $U$ denote the set of vertices $u\in B_1$ with $|\cL(u)|\geq 2rt(Kd)^{2t-2}$.
Then 
\[\sum_{u\in U} |\cL(u)|\geq (\varepsilon/4K)d^{2t} - (Kd) 2rt(Kd)^{2t-2}\geq (\varepsilon/8K)d^{2t},\]
when $d$ is sufficiently large. By convexity, the number of tuples $\tuple{u,T_1,\dots, T_r}$ where
$u\in U$ and $T_1,\dots, T_r$ are vertex disjoint members of $\cL(u)$ is at least

\[\sum_{u\in U} |\cL(u)|^r \geq |U|\left(\frac{\sum_{u\in U} |\cL(u)|}{|U|}  \right)^r\geq (Kd) 
\left[\frac{(\varepsilon/8K)d^{2t}}{Kd}\right]^r\geq  (\varepsilon/8)^r K^{1-2r}d^{(2t-1)r+1}.\]

On the other hand, the number of choices for any $T_i$
is at most $(Kd)^{2t-1}$, since there are at most $Kd$ choices to map $w$ to and at most $(Kd)^{2t-2}$
ways to grow $T_i$. So the number of choices for the tuple $\tuple{T_1,\dots, T_r}$ is at most
$(Kd)^{(2t-1)r}$. By averaging, there exists a tuple $\tuple{T_1,\dots, T_r}$ such that there are at least 
$K(\varepsilon/8K^2)^r d\geq \ell$ different vertices $u_1,\dots, u_\ell$ such that for each $i\in [\ell]$,
$T_1,\dots, T_r$ are vertex disjoint members of $\cL(u_i)$.

For each $j\in [r]$, let $w^j$ denote the image of $w$ in $T_i$ and $b^j_1,\dots, b^j_t$ denote the images
of $b_2,\dots, b_t$ in $T_j$, respectively. By Claim 3, for each $i\in [\ell], j\in [t],\tuple{u_i, b^j_2,\dots, b^j_t, w^j}$
is $(S_{t+1}, \floor{\frac{L}{t+2}})$-linked in $G$. Note that the $(t+1)$-uniform hypergraph corresponding to these
$rt\ell$ tuples is a copy of $\Gamma_{r,t,\ell}$. For $L$ sufficiently large, we have $L\geq v(H^\ell_{r,t})$ and 
by Lemma \ref{lem:strong-tuples-embed}, $G$ contains a copy of $H^\ell_{r,t}$, contradicting $G$ being $H^\ell_{r,t}$-free.
Hence, the number of $L$-heavy copies of $S_{t+1}^-$ is at most $\varepsilon nd^{2t+1}$.
\end{proof}

\begin{lemma} \label{lem:spider}
Let $\varepsilon>0$ be a real and $r, t, \ell \geq 1$ integers. There exists a positive integer $L_2$ depending only on $r, t, \ell $ such
that for all $L\geq L_2$ and all sufficiently large $n$ the following holds.
Let $d \geq d_0 = d_0(\ve, r, t, \ell, L)$, and $G$ be an $n$-vertex $H_{r,t}^\ell$-free $K$-almost regular graph with minimum degree $d$.
Then the number of $L$-heavy copies of $S_{t +1}$ is at most $\varepsilon nd^{2(t + 1)}$. 
\end{lemma}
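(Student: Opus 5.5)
We argue by contradiction, closely following the proof of Lemma~\ref{lem:short-spiders} and using that lemma as the main input. Let $\F$ be the family of $L$-heavy copies of $S_{t+1}$ in $G$, and suppose $|\F|>\varepsilon nd^{2t+2}$.

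\textbf{Step 1: removing copies with heavy substructures.} We discard every member of $\F$ that contains an $L$-heavy $2$-path or an $L$-heavy copy of $S^-_{t+1}$, where $S^-_{t+1}$ may sit on any leg we delete.
\begin{itemize}
\item By Lemma~\ref{lem:2-paths}, there are at most $\varepsilon' nd^2$ heavy $2$-paths, each extending to at most $(Kd)^{2t}$ copies of $S_{t+1}$.
\item By Lemma~\ref{lem:short-spiders}, applied with a small $\varepsilon'$, there are at most $\varepsilon' nd^{2t+1}$ heavy $S^-_{t+1}$, each extending to at most $Kd$ copies of $S_{t+1}$.
\end{itemize}
With $\varepsilon'$ small in terms of $\varepsilon,K,t$, this leaves a family $\F_2$ with $|\F_2|\ge \frac34|\F|$. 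Every proper subtree of a member of $\F_2$ of the relevant types is $L$-light.

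\textbf{Step 2: cleaning.} As in Lemma~\ref{lem:short-spiders}, we repeatedly apply two rules:
\begin{itemize}
\item delete all members whose leaf vector is shared by fewer than $\frac12 L^{(2t+2)^2}$ remaining members;
\item delete all members whose $D$-shadow, for $D=S_{t+1}$ minus the leg $wa_{t+1}b_{t+1}$, is the shadow of fewer than $L$ remaining members.
\end{itemize}
The first rule costs at most $|\F|/2$, since the number $M$ of leaf vectors satisfies $M\le |\F|/L^{(2t+2)^2}$. The second costs at most $Ln(Kd)^{2t}=o(nd^{2t+2})$. The surviving family $\F_3$ therefore has size $\Omega(nd^{2t+2})$.

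\textbf{Step 3: linkedness of leaf vectors.} We show that every leaf vector of a member of $\F_3$ is $(S_{t+1},L')$-linked in $G$, using the maximal disjoint family argument of Claim~\ref{claim:LF-linked}. Take a maximal vertex-disjoint collection of copies with that leaf vector, and let $M$ be its set of internal vertices. Because no member of $\F_3$ contains a heavy $2$-path or a heavy $S^-_{t+1}$, fixing the image of any single internal vertex $w$ or $a_i$ allows at most $L^{O(t)}$ completions. Hence $|M|\ge L^{(2t+2)^2-O(t)}$, which gives linkage of order $L/(t+2)$ or more.

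\textbf{Step 4: building $\Gamma_{r,t,\ell}$.} We pass to $D$-shadows, obtaining $|\partial_D(\F_3)|\ge |\F_3|/(Kd)^2=\Omega(nd^{2t})$. Then we repeat the averaging/convexity argument from the end of Lemma~\ref{lem:short-spiders}:
\begin{itemize}
\item fix the image $v$ of $a_1$, and let $u$ range over images of $b_1$;
\item for each $u$, collect the family $\cL(u)$ of residual copies of $S_{t-1}$ together with the extra leg data;
\item use convexity to find $r$ vertex-disjoint residual structures $T_1,\dots,T_r$ common to $\ell$ distinct vertices $u_1,\dots,u_\ell$.
\end{itemize}
This produces $r\ell$ tuples forming a copy of $\Gamma_{r,t,\ell}$ whose edges are $(S_{t+1},L')$-linked. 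Lemma~\ref{lem:strong-tuples-embed} then yields $H^\ell_{r,t}$, a contradiction.

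The main obstacle is Step 4. Here the leg $a_{t+1}b_{t+1}$ must be "forgotten" by passing to the shadow $D$, while the resulting $(t+1)$-tuples $\tuple{u_i,b^j_2,\dots,b^j_t,w^j}$ must still be linked. That linkage requires the cleaning rule on $D$-shadows combined with Step 3, exactly as in Claim~3 of Lemma~\ref{lem:short-spiders}. I expect this step to go through verbatim, with the exponent $2t$ in the count of $\partial_D$ still matching the convexity computation.
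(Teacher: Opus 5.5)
Your Step 4 has a genuine gap. The tuples $\tuple{u_i,b^j_2,\dots,b^j_t,w^j}$ are never shown to be $(S_{t+1},L')$-linked, and the argument of Claim 3 in Lemma~\ref{lem:short-spiders} does not transfer. The counting does match; the linkage is what fails.

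Claim 3 works because the last leaf $a_{t+1}$ of $S^-_{t+1}$ is \emph{adjacent} to the center $w'$. A copy of $S^-_{t+1}$ with leaf vector $\tuple{b'_1,\dots,b'_t,a^1_{t+1}}$ avoiding $w'$, plus the edge $w'a^1_{t+1}$, is a copy of $S_{t+1}$ whose last leaf is $w'$. For $S_{t+1}$ the last leaf $b_{t+1}$ is at distance two from $w'$. Extending a copy with leaf vector $\tuple{b'_1,\dots,b'_{t+1}}$ to $w'$ therefore gives a leg of length four, not a copy of $S_{t+1}$. Nothing in Steps 1--3 produces copies of $S_{t+1}$ having $w^j$ as a leaf. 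So Lemma~\ref{lem:strong-tuples-embed} cannot be applied to your copy of $\Gamma_{r,t,\ell}$. The $D$-shadow cleaning rule in Step 2 exists only to serve this step.

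Here no leg needs to be forgotten. By Step 3, the full leaf vectors $\tuple{b_1,\dots,b_{t+1}}$ are already $(S_{t+1},L)$-linked, so build $\Gamma_{r,t,\ell}$ from them.

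The paper does this with a density argument rather than convexity:
\begin{enumerate}
\item By averaging, find a vertex $v$ that is the center of at least $(\varepsilon/4)d^{2t+2}$ surviving copies.
\item All their leaves lie in $N^2_G(v)$, which has size at most $(Kd)^2$.
\item Each leaf vector carries at most $(t+1)!L^{4t+4}$ of these copies, since every leg is a light $2$-path with fixed ends.
\item Hence the leaf-vector hypergraph has $\Omega(|N^2_G(v)|^{t+1})$ edges and contains $\Gamma_{r,t,\ell}$ by Lemma~\ref{lem:dense-partite}.
\end{enumerate}

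Your convexity scheme can also be repaired by keeping the last leg. Fix the image $v$ of $a_1$, and let $\cL(u)$ be the residual copies of $S_t$ (center in $N(v)$, legs $2,\dots,t+1$), each with at most $(Kd)^{2t+1}$ choices. The same computation then gives $r$ disjoint residuals shared by $\Omega(d)$ vertices $u$. The resulting tuples $\tuple{u_i,b^j_2,\dots,b^j_{t+1}}$ are genuine leaf vectors of survivors, hence linked.

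A minor point in Step 3: fixing the image of some $a_i$ bounds the completions only by $L^{(2t+1)^2}$, via lightness of the $S^-_{t+1}$ obtained by deleting $b_i$, not by $L^{O(t)}$. This is harmless, since $(2t+2)^2-(2t+1)^2=4t+3$.
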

\begin{proof}
Let $\F$ denote the family of $L$-heavy copies of $S_{t+1}$ in $G$. Suppose for contradiction that 
$|\F|>\varepsilon nd^{2t+2}$. 
Let $M$ denote the number of different leaf vectors involved in members of $\F$.
Since members of $\F$ are $L$-heavy in $G$, we have 
$|\F|\geq M\cdot L^{[e(S_{t+1}))^2]}=M \cdot L^{(2t+2)^2}$.
Hence, 
\begin{equation}\label{eq:leaf-vector-count2}
M\leq |\F|/L^{(2t+2)^2}.
\end{equation}

 Let $\F_1$ denote the subfamily of members of $\F$ that contain a $L$-heavy $2$-path
and let $\F_2$ denote the subfamily of members  of $\F$ that contains a $L$-heavy copy of $S_{t+1}^-$.
By Lemma \ref{lem:2-paths} and Lemma \ref{lem:short-spiders}, by choosing $L$ to be sufficiently large, we can ensure
that the number of heavy $2$-paths in $L$ is at most $(\varepsilon/8K^{2t})nd^2$ and that the number of
heavy $S_{t+1}^-$ is at most $(\varepsilon/8K) nd^{2t+1}$. Since $\Delta(G)\leq Kd$, we have
$|\F_1|\leq (\varepsilon/4K^{2t}) n d^2 (Kd)^{2t}\leq (\varepsilon/8) nd^{2t+2}$ and $|\F_2|\leq (\varepsilon/8K) nd^{2t+1}(Kd)
\leq (\varepsilon/8) nd^{2t+2}$.

Let $\F_3=\F\setminus (\F_1\cup \F_2)$. Then $|\F_3|\geq (3/4)|\F|$.

\medskip

 We now clean $\F_3$ as follows. Whenever there is a member $F$ such that the number of remaining members of $\F_3$ that share the same leaf vector as $F$ is less than $(1/2)L^{(2t+2)^2}$,
we remove all the members which contain this leaf vector. Let $\F_4$ denote the remaining subfamily of $\F_3$ after no more removals can be performed. The number of members removed is at most $|M|(1/2)L^{(2t+2)^2}<|\F|/2$, where we used \eqref{eq:leaf-vector-count2}.
Hence, $|\F_4|\geq |\F|/4\geq (\varepsilon/4) nd^{2t+2}$.  By definition, for each $F\in \F_4$, there are at least $\frac{1}{2}L^{(2t+2)^2}$
members of $\F_4$ having the same leaf vector as $F$.

\medskip

\begin{claim}The leaf vector of each member of $\F_4$ is  $(S_{t+1},L)$-linked in $G$. 
\end{claim}
\medskip

\begin{proof} Let $F\in \F_4$. Let $Z=\tuple{z_1,\dots, z_{t+1}}$
be the leaf vector of $F$. Let $\F_Z$ be the subfamily of members of $\F_4$
that have the leaf vector $Z$.
Let $F_1,\dots, F_m$ be a maximum collection of
internally disjoint members of $\F_Z$. We may assume
$m<L$; otherwise we are done. Let $U=\bigcup_{i=1}^m V(F_i)\setminus Z$.
Then $|U|\leq (t+2)m<(t+2)L$. By maximality, each member of $\F_Z$ 
must map one of $\{w,a_1,\dots, a_{t+1}\}$ to a vertex of $U$.  Recall that members of $\F_Z$ contain
no $L$-heavy $2$-paths in $G$ and no $L$-heavy $S_{t+1}^-$ in $G$.
Consider any $u\in U$, the number
of members of $\F_Z$ that map the central vertex $w$ of $S_{t+1}$
to $u$ is at most $(L^4)^{t+1}=L^{4t+4}$.  For any $i\in [t+1]$,
the number of members of $\F_Z$ that map $a_i$ to $u$ is at most
$L^{(2t+1)^2}$, because members of $\F_Z$ contain no $L$-heavy $S_{t + 1}^-$. So $|\F_Z|\leq |U|(L^{4t+4}+(t+1)L^{(2t+1)^2})
<(t+2)^2L\cdot L^{(2t+1)^2}<\frac{1}{2} L^{(2t+2)^2}$, provided that
$L\geq L_2$ and $L_2$ is sufficiently large in terms of $t$. But this contradicts our
earlier claim about members of $\F_4$.
\end{proof}

Since $|\F_4|\geq (\varepsilon/4)nd^{2t+2}$ by averaging, there exists a vertex $v$ such that
the number of members of $\F_4$ that map the central vertex $w$ of $S_{t+1}$ to $v$ is at least $(\varepsilon/4) d^{2t+2}$.
Let $\D$ denote the subfamily of these members having $v$ as the central vertex.
Let $N^2_G(v)$ denote the set of vertices at distance at most two from $v$ in $G$. Then $|N^2_G(v)|\leq (Kd)^2$. 
Let $\cL$ be the $(t+1)$-uniform hypergraph on $N^2_G(v)$ whose hyperedges correspond to the leaf vectors of $\D$. Note that each hyperedge corresponds to no more than $(t + 1)! L^{4t + 4}$ copies of $F$ in $\F_4$ as no member of $\F_4$ contains an $L$-heavy $2$-path. 
Then $v(\cL)\leq (Kd)^2$ and $e(\cL)\geq \frac{\varepsilon}{4(t+1)! L^{4t + 4}} d^{2t+2}\geq \frac{\varepsilon}{4(t+1)!K^{2t+2}L^{4t + 4}} (v(\cL))^{t+1}$.
Recall that $\Gamma_{r,t,\ell}$ is $(t+1)$-partite and by Lemma~\ref{lem:dense-partite}, $\ex(m,\Gamma_{r,t,\ell})=o(m^{t+1})$.
Hence, when $d$ is sufficiently large (thus $v(\cL)$ is sufficiently large), we can ensure that $\cL$ contains a copy of $\Gamma_{r,t,\ell}$.
For $L$ sufficiently large, we have $L\geq 2r\ell(t+1)$. Since $\Gamma_{r,t,\ell}$ is a neighborhood hypergraph
of $F^\ell_{r,t}$, by Lemma \ref{lem:strong-tuples-embed}, $G$ contains a copy of $H^\ell_{r,t}$, a contradiction.
Hence, the number of $L$-heavy copies of $S_{t+1}$ is at most $\varepsilon nd^{2t+2}$.

\end{proof}

\section{Strong trees and anchored subfamilies} \label{sec:strong-anchor}

In this section, we introduce two important notions and a few lemmas related to these notions.

\begin{definition}[strong trees]\label{def:strong}
{\rm 
Let $L$ be a positive integer.
Let $T$ be a tree. We call a labeled copy $F$ of $T$ in a graph $G$ {\it $L$-strong} 
if its leaf vector is $(T,L)$-linked in $G$.
Let $G$ be a graph and $\cH$ be a family of labeled copies of $T$ with 
the same leaf vector $Z$. We say that $\cH$ is {\it $L$-strong} if there exists $L$
members of $\cH$ that are pairwise vertex disjoint outside the common leaf vector $Z$.
}
\end{definition}

For the next lemma, recall the notations related to $T'_{r,t}$, given in Definition~\ref{def:Trt}.
\begin{lemma} \label{lem:strong-subtree-lemma}
Let $r,t,\ell$ be positive integers, and $K$ a positive real.
Let $D$ be a subtree of $T'_{r,t}$ with $rt$ leaves obtained from $T'_{r,t}$ by
deleting at most one $z_{i,j}$ for each $i\in [r]$.   
There exist positive constants $C = C(r, t, \ell, K, D)$ and $L_3$ such
that for all $L\geq L_3$ and all sufficiently large $n$ the following holds.
Let $d \geq d_0(r, t, \ell, K, D, L)$, and $G$ be an $n$-vertex $H_{r,t}^\ell$-free $K$-almost regular graph with minimum degree $d$.
Then the number of $L$-strong copies of $D$ in $G$ is at most $C nd^{e(D)-1}$.
\end{lemma}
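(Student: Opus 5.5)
The idea is to compare $L$-strong copies of $D$ with copies of $T'_{r,t}$, which can never have a linked leaf vector in an $H^\ell_{r,t}$-free graph. I would also use Lemmas~\ref{lem:2-paths}, \ref{lem:short-spiders} and \ref{lem:spider} to control multiplicities. Let $I\subseteq[r]$ be the set of indices $i$ for which some $z_{i,j_i}$ was deleted, so that for $i\in I$ the vertex $v_{i,j_i}$ is a leaf of $D$. Then $e(D)=2r+2rt-|I|$, and the trivial count is $n(Kd)^{e(D)}$; the claim is that strong copies cost one factor of $d$.

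\textbf{Case $I=\emptyset$.} Here $D=T'_{r,t}$. If some labeled copy is $L$-strong with $L\ge \ell$, its leaf vector is $(T'_{r,t},\ell)$-linked. The $\ell$ internally disjoint copies form exactly $H^\ell_{r,t}=[T'_{r,t}]^\ell_R$, a contradiction. So there are no strong copies and the bound holds with any $C$.

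\textbf{Case $I\neq\emptyset$.} Fix $i_0\in I$. Write a strong copy $F$ as $F=F^-+u$, where $u$ is the image of $v_{i_0,j_{i_0}}$ and $F^-=F-u$. For fixed $F^-$, let $U(F^-)$ be the set of $u\in N(y)$, with $y$ the image of $y_{i_0}$, such that $F^-+u$ is $L$-strong. Then
\[\#\{\text{strong copies}\}=\sum_{F^-}|U(F^-)|,\]
and it suffices to show that $|U(F^-)|$ is bounded by a constant for all but a negligible set of $F^-$.

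The engine is a linkage argument. Suppose $u^1,\dots,u^\ell\in U(F^-)$ are such that, simultaneously for every $i\in I$, the $v_{i,j_i}$-leaves can be completed to a common new leaf $z_i$ adjacent to all relevant $u$'s. Each $F^-+u^k$ is $L$-strong, so with $L\ge v(H^\ell_{r,t})$ we greedily choose pairwise internally disjoint copies from the $L$ available ones. Adding the edges $u^kz_{i_0}$ (and the analogous edges at the other $i\in I$) gives $\ell$ internally disjoint copies of $T'_{r,t}$ with a common leaf vector, that is, $H^\ell_{r,t}$, a contradiction. This is the same mechanism as in Lemma~\ref{lem:strong-tuples-embed} and in the last paragraph of Lemma~\ref{lem:short-spiders}.

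To turn this into the bound $|U(F^-)|=O(1)$, I would do the following.
\begin{enumerate}
\item Discard the strong copies whose branch at $y_{i_0}$, together with $x_{i_0}w$, forms an $L$-heavy $S^-_{t+1}$. This branch is the spider centered at $y_{i_0}$ with $t$ legs of length two (the $t-1$ legs to the remaining $z_{i_0,j}$ and the leg through $x_{i_0}$ to $w$) and one leg of length one to $u$, so it is a copy of $S^-_{t+1}$. Also discard copies containing an $L$-heavy 2-path. By Lemmas~\ref{lem:2-paths} and \ref{lem:short-spiders}, these are at most $\varepsilon n d^{e(D)}$; I will need to sharpen this, see the last paragraph.
\item For the remaining copies, lightness says that once the leaves are fixed, the branch is determined up to $L^{O(t^2)}$ choices.
\item Double count the triples $(F^-,u,z)$ with $z\in N(u)$, of which there are at least $|U(F^-)|\cdot d/2$ per $F^-$. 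The linkage argument shows that no $z$ is adjacent to $\ell$ elements of $U(F^-)$ that carry internally disjoint strong copies.
\item Then proceed as in the end of the proof of Lemma~\ref{lem:short-spiders}. Average over the position of $w$. By convexity, many pairs $(F^-,u)$ share $r$ disjoint branch structures, and a final averaging over the $d$ neighbours of $u$ produces $\ell$ good $u$'s with a common extension, as in the ``$u_1,\dots,u_\ell$'' step there.
\end{enumerate}
If $|I|>1$, I would extend the $v$-leaves one index at a time, applying the same argument for each $i\in I$.

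The main obstacle is step 3. The naive double count through 2-paths $y\,u\,z$ only gives $|U(F^-)|\le \ell K^2 d$, which is the trivial bound. So the saving of a factor $d$ has to come from combining linkedness of the whole leaf vector with the lightness of the $S^-_{t+1}$ and $S_{t+1}$ pieces (Lemmas~\ref{lem:short-spiders} and \ref{lem:spider}). Those lemmas only give $\varepsilon nd^{e(D)}$ rather than $nd^{e(D)-1}$, so I would first try to get the required $\ell$-fold common extension by the convexity and averaging argument of Lemma~\ref{lem:short-spiders}, applied to the family of $u$'s attached to a fixed $F^-$. The constant $C$ will then be polynomial in $L$, $K$ and $\ell$.
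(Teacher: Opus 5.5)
\textbf{There is a genuine gap, at exactly the step you flag as the main obstacle.}

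Your reduction (write a strong copy as $F^-+u$ and show $|U(F^-)|=O(1)$) is the right one, and your linkage ``engine'' is the mechanism the paper uses. But you never produce the common new leaf $z_{i_0}$, and the convexity/averaging plan you sketch for finding it is not carried out.

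\emph{The missing idea: no search is needed.} By definition every $u\in U(F^-)$ lies in $N(y)$, where $y$ is the image of $y_{i_0}$ in $F^-$, so take $z_{i_0}=y$. The argument then runs as follows.
\begin{enumerate}
\item Take distinct $u^1,\dots,u^\ell\in U(F^-)$.
\item For each $k$ in turn, choose one of the $L$ internally disjoint copies of $D$ with the leaf vector of $F^-+u^k$. Require its interior to avoid $y$, all the $u^{k'}$, and the interiors of the copies already chosen. This is possible because each forbidden vertex meets the interior of at most one of the $L$ copies.
\item Add the edge $u^ky$. This gives a copy of $T'_{r,t}$ in which $u^k$ plays $v_{i_0,j_{i_0}}$ and $y$ plays $z_{i_0,j_{i_0}}$.
\item All $\ell$ of these copies have the same leaf vector and pairwise disjoint interiors, so together they form $H^\ell_{r,t}$.
\end{enumerate}
Hence $|U(F^-)|\le \ell-1$ for \emph{every} $F^-$ when $|I|=1$. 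This gives at most $\ell\, n(Kd)^{e(D)-1}$ strong copies outright.

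This is precisely the paper's argument: it replaces $z'_{i,1}$ by $y'_i$ in the leaf vector. For $|I|=p>1$ (relabel so that $I=[p]$ and $j_i=1$), the paper makes your ``one index at a time'' step precise with a preliminary cleaning.
\begin{itemize}
\item It deletes every copy $F$ such that, for some $i\in[p]$, the shadow $F[D-v_{i,1}]$ is shared by at most $C/(2K^{e(D)}p)$ copies. This removes at most $\frac{C}{2}nd^{e(D)-1}$ copies.
\item For a surviving $F$, it swaps $v_{1,1},\dots,v_{p,1}$ successively to new neighbours $v''_{i,1}$ of $y'_i$ outside a forbidden set. Each swap stays inside the cleaned family, so the next swap is still available.
\item It uses strength of the final copy to pick a partner avoiding $y'_1,\dots,y'_p$, then adds the edges $v''_{i,1}y'_i$.
\end{itemize}

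\textbf{Your step~1 cannot work as planned.} Lemmas~\ref{lem:2-paths} and~\ref{lem:short-spiders} only bound the discarded copies by $\varepsilon nd^{e(D)}$. That exceeds the target $Cnd^{e(D)-1}$, so discarding them loses more than the statement allows. You acknowledge this but do not repair it.

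With $z_{i_0}=y$, no lightness or heaviness information is needed at all: the proof uses only $L$-strength and $H^\ell_{r,t}$-freeness. Your treatment of the case $I=\emptyset$ is fine.
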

\begin{proof}
Let $C$ be sufficiently large in terms of $r, t, \ell, K,$ and $e(D)$ and let $L \geq L_3$ with $L_3$ sufficiently large in terms of $r, t, \ell$.  
Let $\F$ be the family of all $L$-strong copies of $D$ in $G$. Suppose for contradiction that
$|\F|>C nd^{e(D)-1}$. Suppose $e(D)=rt-p$, where $1\leq p\leq r$. Without loss of generality, suppose $D=T'_{r,t}-\{z_{1,1}, z_{2,1},\dots, z_{p,1}\}$. 

First, we clean $\F$ to obtain a subfamily as follows. Whenever there is a subtree $B$ of $D$
of the form $D-v_{i,1}$ for some $i\in [p]$ and a member $F\in \F$ such that $F[B]$ is the $B$-shadow
of at most $\frac{C}{2K^{e(D)}p}$, we remove all of these members of $\F$. We stop when no more removals can be performed. Clearly, there are at most $n(Kd)^{e(D)-1}p$ choices of $B$. Hence, the number of
members of $\F$ that are removed is at most $\frac{C}{2K^{e(D)}}n(kd)^{e(D)-1}p\leq \frac{C}{2} n d^{e(D)-1}$, when $n$ is sufficiently large. Let $\F_1$ denote the remaining subfamily of $\F$.
Then $|\F_1|\geq \frac{1}{2} nd^{e(D)-1}$. Let $F$ be any member of $\F_1$.
For each $i\in [r]$, let $y'_i$ denote the image of $y_i$ in $F$. For each $i\in [r], j\in [t]$,
let $z'_{i,j}$ denote the image of $z_{i,j}$ in $F$ and let $v'_{i,j}$ denote the image of $v_{i,j}$ in $F$. 

\begin{claim}
Let $Z^*$ denote the $rt$-tuple obtained from $\tuple{z'_{1,1},\dots, z'_{1,t}, \dots, z'_{r,1},
\dots, z'_{r,t}}$ by replacing $z'_{i,1}$ with $y'_i$ for each $i\in [p]$.
Let $S$ be any set of at most $(2rt+2r+1)\ell$ vertices outside $Z^*$.
Then there exists
a labeled copy of $T'_{r,t}$ in $G$ with leaf vector $Z^*$ that avoids $S$.
\end{claim} 
\begin{proof} Note that by our cleaning rule, there are at least $\frac{C}{2K^{e(D)p}} \geq (2rt + 2r + 1)\ell$ members of $\F_1$ that have the same image of $D-v_{1,1}$ as $F$. Among these, we can choose one, $F_1$, that maps $v_{1,1}$ to a vertex $v''_{1,1}$ outside $S\cup\{v'_{1,1}\}$. By a similar reasoning, there is a member $F_2$ of $\F_1$ that maps $v_{2,1}$ to a vertex $v''_{2,1}$ outside $S$ but are otherwise identical to $F_1$. We can continue like this
to find a member $F_p$ such that for each $i\in [p]$, $F_p$ maps $v_{i,1}$
to a  vertex $v''_{i,1}$ outside $S$ but $F_p$ is otherwise identical to $F_{p-1}$.
Now since $F_p\in \F_1\subseteq \F$. By our assumption of $\F$ there are at least $L$ internally
disjoint members of $\F$ that have the same leaf vector as $F_p$. We can find a member $F'$
that avoids $S\cup \{y'_1,\dots y'_p\}$. Now, $F'\cup \{y'_1v''_{1,1}, y'_2v''_{2,1},\dots, y'_pv''_{p,1}\}$ is a copy of $T'_{r,t}$ with leaf vector $Z^*$ that avoids $S$, by our assumption that $L \geq (rt + 2r + 1)\ell$.
\end{proof}

Now, we can apply this property $\ell$ times to find $\ell$ internally disjoint copies of $T'_{r,t}$
with leaf vector $Z^*$. Namely, first we let $S=\emptyset$ and apply the claim to find a copy
$T_1$ of $T'_{r,t}$ with leaf vector $Z^*$. Next, we let $S=V(T_1)\setminus Z^*$ and apply 
the claim to find a copy $T_2$ of $T'_{r,t}$ with leaf vector $Z^*$ that avoids $S$. Next, we
let $S=(V(T_1)\cup V(T_2))\setminus Z^*$ and apply the claim again to find $T_3$. We can continue
like this to find $\ell$ internally disjoint copies of $T'_{r,t}$ with leaf vector $Z^*$. Now, now
the union of them forms a copy of $H_{r,t}^\ell$ in $G$, contradicting $G$ being $H_{r,t}^\ell$-free.
\end{proof}


\begin{lemma} \label{lem:strong-spiders}
Let $p,s,L\geq2$ be integers. Let $\varepsilon>0$ be any real.
Let $H$ be a bipartite graph with a bipartition $(V_1,V_2)$ such that
the neighborhood hypergraph $\N_{V_1}(H)$ in $V_1$ is $q$-partite.
For all $L \geq v(H')$, there exists a constant $C_{1}$ depending on $H, L$ such that the following holds.
Let $G$ be an $H'$-free graph. Let $w\in V(G)$. Let $\cP_w$ be a family of
$L$-light $2$-paths starting at $w$. Suppose $|\cP_w|\geq C_{1}$ and each vertex
in $G$ is the internal vertex of at most $\frac{|\cP_w|}{2q^2}$ members of $\cP_w$.
Let $\F_w$ be the family of all labeled copies of $S_q$ in $G$ whose legs are
members of $\cP_w$. Then
\begin{enumerate}
\item $|\F_w|\geq \frac{1}{2}|\cP_w|^q$.
\item The number of members of $\F_w$ that are $L$-strong in $G$ is at most $\varepsilon |\F_w|$.
\end{enumerate}
\end{lemma}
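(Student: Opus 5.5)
For part 1, I would count ordered $q$-tuples $\tuple{P_1,\dots,P_q}$ of members of $\cP_w$ whose union is a copy of $S_q$ centered at $w$. Such a tuple fails to form a spider exactly when two of the paths share a vertex other than $w$. Paths starting at $w$ and sharing a vertex can do so in three ways: both middle vertices coincide, both endpoints coincide, or the middle vertex of one equals the endpoint of the other.

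I will bound each case separately.
\begin{itemize}
\item Same middle vertex. Each vertex is the internal vertex of at most $|\cP_w|/(2q^2)$ members, so for a fixed $P_i$ at most $|\cP_w|/(2q^2)$ choices of $P_j$ share its middle vertex.
\item Same endpoint, or middle of one equal to endpoint of the other. Here I use $L$-lightness. A $2$-path $wxy$ is $L$-light, so at most $L^{4}$ labeled $2$-paths have leaf vector $\tuple{w,y}$. Hence at most $L^4$ members of $\cP_w$ end at a given vertex.
\end{itemize}
For a fixed $P_i$, the number of conflicting $P_j$ is therefore at most $|\cP_w|/(2q^2)+2L^4$. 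This is at most $|\cP_w|/q^2$ once $|\cP_w|\ge C_1\ge 2q^2L^4$.

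There are $\binom q2$ pairs $(i,j)$, so the number of bad tuples is at most $\binom q2\,|\cP_w|^{q}/q^2\le |\cP_w|^q/2$. This gives $|\F_w|\ge \frac12|\cP_w|^q$. (If one prefers, $C_1$ can be chosen more generously here.)

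For part 2, I argue by contradiction. Suppose more than $\varepsilon|\F_w|\ge \frac{\varepsilon}{2}|\cP_w|^q$ members of $\F_w$ are $L$-strong. Their leaf vectors are $q$-tuples of endpoints of paths in $\cP_w$.

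Let $Y$ be the set of endpoints. Each endpoint carries at most $L^4$ paths, so a single leaf vector arises from at most $L^{4q}$ members of $\F_w$. Also $|Y|\le|\cP_w|$. The $q$-uniform hypergraph $\cL$ on $Y$ formed by leaf vectors of $L$-strong members then has
\[
e(\cL)\ \ge\ \frac{\varepsilon}{2\,q!\,L^{4q}}|\cP_w|^q\ \ge\ \frac{\varepsilon}{2\,q!\,L^{4q}}|Y|^q .
\]
I also need $|Y|\ge |\cP_w|/L^4$, which is large when $C_1$ is large.

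I would make $\cL$ $q$-partite by taking a random partition of $Y$ into $q$ parts, which keeps a $q!/q^q$ fraction of the edges. Then Lemma~\ref{lem:dense-partite} applied to $F=\N_{V_1}(H)$ shows that $\cL$ contains a copy of $\N_{V_1}(H)$ once $|Y|$ is large. This is guaranteed by choosing $C_1$ large in terms of $H$, $L$, $\varepsilon$.

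Every edge of $\cL$ is the leaf vector of an $L$-strong copy of $S_q$, so it is $(S_q,L)$-linked in $G$. Lemma~\ref{lem:strong-tuples-embed} then yields $H'\subseteq G$, which is the desired contradiction.

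The main obstacle is part 1: handling the cross-type collisions, where the middle vertex of one path equals the endpoint of another, using only lightness and the middle-vertex bound. The second part is a routine density plus supersaturation argument.

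A small point needs care. Lemma~\ref{lem:dense-partite} requires $\N_{V_1}(H)$ to be a $q$-uniform $q$-partite hypergraph. If some edges have size less than $q$, I would pad $H$, or note that spiders with fewer legs are sub-spiders so linkedness passes down. I expect the paper intends the uniform case.
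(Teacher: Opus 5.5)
Your proposal is correct and follows the paper's proof essentially step for step. Part 1 discards $q$-tuples with a repeated middle vertex (via the internal-vertex bound) or a repeated endpoint (via $L$-lightness), and part 2 builds the hypergraph of leaf sets of $L$-strong spiders, then uses the $q!L^{4q}$ multiplicity bound, $|\cP_w|/L^4\le |Y|\le |\cP_w|$, Lemma~\ref{lem:dense-partite} and Lemma~\ref{lem:strong-tuples-embed}. You also handle ``middle vertex of one path equals endpoint of the other'' collisions, which the paper's count skips (harmless there, since $G$ is bipartite in every application). Note that the event ``middle vertex of $P_j$ equals endpoint of $P_i$'' is bounded by $L^4$ only if you fix $P_j$ and count $P_i$, so your $2L^4$ should read $3L^4$, which is absorbed by taking $C_1$ larger. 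The random $q$-partition is unnecessary, because Lemma~\ref{lem:dense-partite} applies to any $q$-uniform host hypergraph.
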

\begin{proof}
First, we choose $C_{1}$ to be large enough so that $\ex(m, \N_{V_1}(H))\leq \frac{\varepsilon}{2q!L^{4q}} m^q$
for all $m\geq C_{1}/L^4$. Such $C_{1}$ exists by Lemma~\ref{lem:dense-partite}.
The number of ways to pick members $P_1,\dots, P_q\in \cP_w$ is 
$|\cP_w|^q$. The number of such  $q$-tuples in which for some $i\neq j$, $P_i, P_j$ have the same second vertex
is at most $|\cP_w|^{q-1}\cdot {q \choose 2} \frac{|\cP_w|}{2q^2} <\frac{1}{4}|\cP|^q$, by our assumption.
Since members of $\cP_w$ are $L$-light $2$-paths,
the number $q$-tuples such that for some $i\neq j, P_i, P_j$ have the same third vertex
is at most $|\cP_w|^{q-1}\cdot {q \choose 2} L^4<\frac{1}{4}|\cP_w|^q$. Therefore, the number of $q$-tuples $P_1,\dots, P_q$
that form a copy of $S_q$ is at least $\frac{1}{2}|\cP_w|^q$. By our assumption, $\F_w$ is
the family all labeled copies of $S_q$ whose legs are members of $\cP_w$. By our prior discussion,
$|\F_w|\geq \frac{1}{2}|\cP_w|^q$.

Let $V$ be the set of endpoints of members of $\cP_w$ opposing $w$. Note that since members of $\cP_w$ are $L$-light, $|V|\geq |\cP_w|/L^4\geq C_{1}/L^4$. Also, trivially, $|V|\leq |\cP_w|$. Let $\D$ be the $q$-uniform hypergraph on $V$ whose edges are the unordered $q$-sets corresponding to the leaf vectors of members of $\F_w$ that are $L$-strong in $G$. Suppose for contradiction that the number of members 
of $\F_w$ that are $L$-strong in $G$ is more than $\varepsilon |\F_w|$. 
Note that each $s$-set can correspond to the leaf vector of at most $q!L^{4q}$ members of $\F_w$, since legs of any member
of $\F_w$ are $L$-light and there are $q!$ permutations of the $s$-set.
So, $e(\D)\geq 
\frac{\varepsilon}{q!L^{4q}} |\cP|^q\geq \frac{\varepsilon}{q!L^{4q}}|V|^q>\ex(|V|, \N_{V_1}(H))$. Note that as $|V| \geq C_{1} / L^4$, the bound holds. 
Hence $\D$ contains a copy
$\N^*$ of $\N_{V_1}(H)$. Since edges of $\N^*$ correspond to leaf vectors of $L$-strong copies of $S_q$ in $G$, and $L \geq v(H')$,
we can greedily embed $H'$ into $G$ with $N_{V_1}(H)$ mapped to $\N^*$, contradicting $G$ being
$H'$-free. 
\end{proof}

\begin{definition} [anchors and anchored subfamilies]
{\rm Let $T$ be a tree with $q$ leaves. Let $u$ be a non-leaf of $T$. Let $G$ be a graph.
Let  $Z\in [V(G)]_q$. Let $\mathcal{F}_Z$ be a family of labeled copies of $T$ with leaf vector $Z$. 
If there exists a vertex $v$ in $G$ such that
the subfamily $\A_Z$ of members of $\F_Z$ that map $u$ to $v$ has size at least $\alpha |\mathcal{F}_Z|$, we call $v$ a {\it $(\alpha, u)$-anchor} for $\mathcal{F}_Z$ and we call $\A_Z$ the  {\it $(\alpha,u,v)$-anchored subfamily} of $\F_Z$.}
\end{definition}
\begin{lemma}\label{lem:anchored}
Let $L$ be a positive integer. Let $T$ be a tree with $q$ leaves. Let $\alpha$ be a positive
real with $\alpha\leq \frac{1}{(v(T)-q)^2 L}$.
Let $G$ be a graph and  $Z$ be a vector of $q$ vertices in $G$.
Let $\F_{Z}$ be a family of labeled copies of $T$ with leaf vector $Z$, such that the family $\F_Z$ is not $L$-strong.  
Then there exists a non-leaf vertex $u$ of $T$ and  a vertex $v$ in $G$
such that $v$ is an $(\alpha, u)$-anchor for $\F_Z$.
\end{lemma}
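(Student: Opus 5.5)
A maximal collection of internally disjoint members of $\F_Z$ has fewer than $L$ elements, so the set $U$ of their non-leaf images is small. Every member of $\F_Z$ must hit $U$ through some non-leaf vertex of $T$, and pigeonholing over these (vertex of $U$, non-leaf vertex of $T$) pairs produces the anchor.

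Take a maximal collection $F_1,\dots,F_m$ of members of $\F_Z$ that are pairwise vertex disjoint outside $Z$. Since $\F_Z$ is not $L$-strong, $m\le L-1<L$. Set
\[U=\bigcup_{i=1}^m \bigl(V(F_i)\setminus Z\bigr).\]
Each $F_i$ has exactly $v(T)-q$ vertices outside $Z$, namely the images of the non-leaves of $T$. Hence $|U|\le m(v(T)-q)<L(v(T)-q)$.

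Now take any $F\in\F_Z$. Its leaves are mapped onto $Z$, so all of its vertices outside $Z$ are images of non-leaves of $T$. If $F$ avoided $U$ outside $Z$, we could add it to the collection, contradicting maximality. So some non-leaf $u$ of $T$ has its image in $F$ lying in $U$. This lets us assign each $F\in\F_Z$ to a pair $(u,v)$ with $u$ a non-leaf of $T$, $v\in U$, and $F$ mapping $u$ to $v$.

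There are at most $(v(T)-q)\cdot|U|<(v(T)-q)^2L$ such pairs. By pigeonhole, some pair $(u,v)$ receives at least
\[\frac{|\F_Z|}{(v(T)-q)^2L}\ge\alpha|\F_Z|\]
members of $\F_Z$, using the hypothesis $\alpha\le \frac{1}{(v(T)-q)^2L}$. The subfamily of members mapping $u$ to $v$ therefore has size at least $\alpha|\F_Z|$, so $v$ is an $(\alpha,u)$-anchor for $\F_Z$.

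Two edge cases need checking. First, if $\F_Z$ is empty the statement is vacuous. Second, the statement implicitly requires $\F_Z$ to be nonempty and $T$ to have a non-leaf, which holds since $T$ has $q$ leaves and $v(T)>q$. The main point to verify is that "not $L$-strong" gives $m<L$ directly from the definition of $L$-strong families; this is immediate.
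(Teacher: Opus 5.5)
Your proof is correct and follows the paper's argument. A maximal internally disjoint subcollection has fewer than $L$ members, which gives a set of at most $(v(T)-q)L$ non-leaf images that every member of $\F_Z$ must hit, and pigeonholing then yields the anchor with the bound $|\F_Z|/((v(T)-q)^2L)$. The only cosmetic difference is that you pigeonhole once over (non-leaf, vertex) pairs, whereas the paper pigeonholes first over the vertices of that set and then over the non-leaves of $T$.
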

\begin{proof}
 Consider a maximal collection $\M_Z$ of members of $\F_Z$ that are pairwise
vertex-disjoint outside of $Z$.  Let $S=\left(\bigcup_{F\in \M_Z} V(F)\right)\setminus Z$. Since  $\F_Z$ is not $L$-strong, $|M_Z|<L$. Therefore, $|S|\leq (v(T) - q) L$.
By the maximality of $\M_Z$, every member of $\F_Z$ must contain a vertex in $S$. 
By the pigeonhole principle, there is a vertex $v$ in $S$ that lies in at least $\frac{|\F_Z|}{(v(T) - q) L}$  of the members of $\F_Z$. Applying the pigeonhole
principle  again,  there exists a nonleaf $u$ of $T$ such that at least $\frac{|\F_Z|}{(v(T) - q)^2L}$  members of $\F_Z$ map $u$ to $v$. 
\end{proof}


\section{Embedding lemmas for subdivisions} \label{sec:subdiv-embed}

In this section, we develop some useful lemmas for embedding subdivisions into a host graph.
The first lemma is a standard. The next two are new and all three provide
key ingredients to the proof of Theorem \ref{thm:main}.
We start with the following lemma whose proof follows from Lemma 3.4 of \cite{CJL} or  Lemma 2.8 of \cite{CJ},
and is quite standard. Nevertheless, since our statement is slightly different, we include a proof for completeness.

\begin{lemma} \label{lem:subdiv-lem1}
Let $H$ be a bipartite graph with a bipartition $(A,B)$ such that each vertex in $B$
has degree at most two. There is a constant $C_{2}=C_{2}(H)$ depending on $H$ such that
the following holds. Let $G$ be a bipartite graph with a bipartition $(X,Y)$.
Let $\delta_Y$ denote the minimum degree of a vertex in $Y$.
Suppose $\delta_Y\geq C_{2}$ and that the number of labeled $2$-paths in $G$ centered in $Y$ is 
at least $C_{2}|X|^2$. Then $G$ contains $H$.  In particular, if $\delta_Y\geq C_{2}$
and $e(G)\cdot \delta_Y\geq 2 C_{2}|X|^2$ then $G$ contains $H$.
\end{lemma}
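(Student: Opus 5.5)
We build an auxiliary graph on $X$ and embed $H$ greedily through it, in the spirit of the standard dependent-random-choice-free argument of \cite{CJL,CJ}. First reduce to the case where every vertex of $B$ has degree exactly two. Vertices of degree zero in $B$ are trivial. A vertex of degree one in $B$ can be absorbed by giving its neighbour a fresh private neighbour; this is harmless because every vertex of $X$ that we use will have large degree into $Y$. Then $H$ is a subgraph of $K'$, where $K$ is the graph on $A$ whose edges are the pairs $N_H(b)$, $b\in B$. So it suffices to embed $K'$, and for this it suffices to embed $K'_{s}$ (with $K_s$ the complete graph, $s=|A|$), or even more simply $K'_{s,s}$, which contains a $K_s'$-type structure when $s$ is replaced by $2s$.

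Set $h=v(H)$. Define an auxiliary graph $\Gamma$ on $X$ by letting $ab\in E(\Gamma)$ (for $a\neq b$) whenever $a$ and $b$ have at least $h$ common neighbours in $Y$. The number of labeled $2$-paths $a\,y\,b$ with centre $y\in Y$ and $a\neq b$ is $\sum_{y}d(y)(d(y)-1)$. Since $\delta_Y\ge C_2\ge 2$, this is at least half the total number of labeled $2$-paths centred in $Y$, hence at least $\frac{C_2}{2}|X|^2$. Pairs $(a,b)$ that are not edges of $\Gamma$ contribute at most $h|X|^2$ such paths in total. Every pair contributes at most $|Y|$, but a cruder bound suffices. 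The main point is that if $\Gamma$ had few edges, it would be hard to place many paths. Taking $C_2\ge 4h$, at least $\frac{C_2}{4}|X|^2$ paths go through pairs in $E(\Gamma)$.

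This step is the main obstacle. Edges of $\Gamma$ can carry arbitrarily many paths, so a path count does not directly give a density bound on $\Gamma$. I would handle it as follows: either $\Gamma$ has at least $c|X|^2$ edges for a constant $c$ depending on $H$, or some pair carries an enormous number of paths. Rather than fight this, I would instead pick a vertex $y\in Y$ at random with probability proportional to $d(y)^2$. Then I would look at $\Gamma$ restricted to $N(y)$, which has size at least $C_2$. In expectation, the fraction of pairs in $N(y)$ that are non-edges of $\Gamma$ is at most about $h|X|^2/(\sum_y d(y)^2)\le h/C_2$. Hence some $y$ gives a set $N(y)$ of size $m\ge C_2$ on which $\Gamma$ has density at least $1-2h/C_2$.

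By Turán's theorem, once $C_2$ is large in terms of $s$, this set contains a clique $Q$ of size $s$ in $\Gamma$. Now embed $K'_s$ greedily. Map $A$ into $Q$. Then for each pair of $Q$ (equivalently each $b\in B$ with neighbourhood $\{a,a'\}$), choose a common neighbour in $Y$ of its two images that has not been used yet. This is possible because each such pair has at least $h$ common neighbours and fewer than $h$ vertices of $Y$ are ever used. The "in particular" clause follows since the number of labeled $2$-paths centred in $Y$ is
\[
\sum_{y}d(y)^2-e(G)\ge e(G)(\delta_Y-1)\ge \tfrac12 e(G)\delta_Y,
\]
which is at least $C_2|X|^2$ under the stated hypothesis.
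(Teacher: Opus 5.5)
Your proof is correct and is essentially the paper's proof: the paper also calls a pair in $X$ heavy if it has many common neighbours (there at least $|B|$), picks a centre $y$ at which almost all $2$-paths join heavy pairs by the same weighted averaging you use (weights $d(y)(d(y)-1)$ instead of $d(y)^2$), applies Tur\'an's theorem inside $N(y)$, and embeds $H$ greedily. One slip: your aside that $K'_{2s,2s}$ contains a $K'_s$-type structure is false for $s\ge 3$, since a subdivided triangle would need a triangle in $K_{2s,2s}$. You never use it, though, and the greedy step also makes the reduction to a simple graph $K$ unnecessary, because it handles vertices of $B$ with repeated neighbourhoods directly.
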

\begin{proof}
Let $a=|A|, b=|B|$. Let $C_{2}=\binom{a}{2} b$.
We call a $2$-path $xyz$ with $x,z\in X$ {\it heavy} if $|N_G(x)\cap N_G(z)|\geq b$,
otherwise call it {\it light}. For each vertex $y\in Y$, let $\lambda_H(y), \lambda_L(y)$
denote the number of heavy and light $2$-paths centered at $y$, respectively.
By our assumption, $\sum_{y\in Y} [\lambda_H(y)+\lambda_L(y)] \geq C_{2}|X|^2$.
On the other hand, by definition, $\sum_{y\in Y} \lambda_L(y)\leq b\binom{|X|}{2}<b|X|^2$.
Hence, there exists some $y\in Y$ such that $\frac{\lambda_H(y)}{\lambda_H(y)+\lambda_L(y)}
\geq 1-\frac{b}{C_{2}}$.

Define an auxiliary graph $D$ on $N(y)$ such that $xz\in E(D)$ if and only if $xyz$ is a heavy $2$-path.
Then $e(G)\geq (1-\frac{b}{C_{2}})\binom{|N(y)|}{2}\geq (1-\frac{1}{\binom{a}{2}} )\binom{|N(y)|}{2}$.
By Tur\'an's theorem, $D$ contains an $a$-set $U$ that induces a complete graph.
Since pairs in $U$ correspond to endpoints of heavy $2$-paths in $G$, we can find a copy of $H$ in $G$
by mapping $A$ to $U$ and grow $H$ greedily.

Now, suppose $e(G)\cdot \delta_Y\geq 2C_{2}|X|^2$. Note that the number of labeled
$2$-paths in $G$ centered in $Y$ is at least $e(G)\cdot(\delta_Y-1)\geq C_{2}|X|^2$ since $C_{2}$ is sufficiently large. By the first statement, $G$ contains a copy of $H$.
\end{proof}

Now, we are ready to prove the main lemmas (Lemma~\ref{lem:spiders-to-graph} and Lemma~\ref{lem:subdiv-lem3}) of the section. Their proofs use some novel ideas that roughly speaking combine the dependent random choice approach with
the anchored subfamily  notion. They will be applied to embed subdivisions of one-side-$q$-bounded bipartite graph
similar to how dependent random choice is applied for embedding one-side-$q$-bounded bipartite graph.

\begin{definition} {\rm 
We say a $q$-legged height two spider $S_q$ is $L$-$\emph{leg-light}$ in $G$ if all of its legs are $L$-light $2$-paths. 
}
\end{definition}


\begin{lemma}\label{lem:spiders-to-graph} Let $H$ be a bipartite graph with a bipartition $(V_1,V_2)$ such that the neighborhood hypergraph $\N_{V_1}(H)$ induced on $V_1$ is $q$-partite. For all $L \geq v(H')$, there exists a $C_{3}$ depending on $L, H$ such that the following holds. 
Let $G$ be a bipartite graph. 
Let $\Pc$ be a family of $L$-light $2$-paths in $G$.
Let $W$ be the set of first vertices over members of $\Pc$, $A$ the set of second vertices, and $B$ the set of third vertices.  Suppose for some $M > 0$ and $0<\lambda\leq \frac{1}{2q^2}$, the following conditions hold. 
\begin{enumerate}
\item Each  $w\in W$ is the first vertex for at least $M$ members of $\Pc$. 
\item For each  $w\in W$ and each $b\in B$, there are at most  $\lambda M$ members of $\cP$
that have $w$ as the first vertex and $b$ as the second vertex.
\item $|\Pc| M^{q - 1} \geq C_{3} |B|^q$. 
\item $\lambda^{-1} M \geq C_{3} |B|$.
\end{enumerate}
Then $G$ contains a copy of $H'$. 
\end{lemma}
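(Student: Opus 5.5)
The plan is to reduce to the $q$-uniform hypergraph on $B$ whose edges are leaf sets of $(S_q,L)$-linked spiders, and then finish with Lemma~\ref{lem:dense-partite} and Lemma~\ref{lem:strong-tuples-embed}. I read condition 2 as bounding the number of members of $\Pc$ with first vertex $w$ and a prescribed \emph{second} vertex (a vertex of $A$).

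\textbf{Step 1 (spiders at each $w$).} For $w\in W$ let $\Pc_w$ be the members of $\Pc$ starting at $w$. By conditions 1--2, $|\Pc_w|\ge M$, and each internal vertex lies on at most $\lambda M\le |\Pc_w|/(2q^2)$ members of $\Pc_w$. Condition 4 together with $\lambda\le 1/(2q^2)$ gives $M\ge C_3|B|\lambda\ge$ a large constant, at least once we know $|B|\lambda$ is not tiny. If $|B|\lambda$ is tiny, condition 2 already handles the situation. So Lemma~\ref{lem:strong-spiders} applies to each $w$. Let $\F=\bigcup_w \F_w$ be the family of spiders with all legs in $\Pc$. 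Then
\[|\F|\ge \tfrac12\sum_w |\Pc_w|^q\ge \tfrac12 M^{q-1}|\Pc|\ge \tfrac{C_3}{2}|B|^q\]
by condition 3. Call a leaf vector $Z\in B^q$ \emph{good} if it is $(S_q,L)$-linked in $G$. If at least $\delta|B|^q$ unordered $q$-sets are good, with $\delta=\delta(H,L)$ chosen via Lemma~\ref{lem:dense-partite}, the good sets contain $\N_{V_1}(H)$ once $|B|$ is large. Here $|B|$ is large because $|\Pc_w|\ge M$ and the legs are $L$-light. Lemma~\ref{lem:strong-tuples-embed} then yields $H'$. So I may assume that fewer than $\delta|B|^q$ leaf vectors are good.

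\textbf{Step 2 (bad leaf vectors are anchored).} Group $\F$ by leaf vector into families $\F_Z$. Spiders with good $Z$ number at most $\varepsilon|\F|$ by part 2 of Lemma~\ref{lem:strong-spiders}, summed over $w$. Hence at least $(1-\varepsilon)|\F|$ spiders have bad $Z$. For such $Z$, $\F_Z$ is not $L$-strong, so Lemma~\ref{lem:anchored} with $\alpha=1/((q+1)^2L)$ gives an $(\alpha,u)$-anchor $v_Z$, where $u$ is either the center or some middle vertex $a_i$.

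\emph{Center anchors.} Since legs are $L$-light, at most $q!L^{4q}$ spiders share a center and a leaf set. Hence $|\F_Z|\le q!L^{4q}/\alpha$. The $Z$'s of this type therefore cover at most $q!L^{4q}\alpha^{-1}|B|^q$ spiders, which is less than $|\F|/4$ once $C_3$ is large compared with $L^{4q+2}$.

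\emph{Middle-vertex anchors.} For the $Z$'s anchored at some $a_i$, I have $|\F_Z|\le\alpha^{-1}|\A_Z|$, where $\A_Z$ consists of spiders whose $i$-th leg passes through $v_Z$. I will double count the pairs consisting of a spider together with an index $i$ such that leg $i$ passes through $v_Z$. Fix $w$ and the other $q-1$ legs. The $i$-th leg must go $w\to v_Z\to z_i$. Conditions 2 and 4 are meant to bound, for fixed $w$, the number of completions whose middle vertex equals the anchor determined by the endpoint. Lightness gives at most $L^4$ choices per endpoint, so at most $L^4|B|$ completions in total. The product with condition 2 should give a bound of the form $\lambda M\cdot(\text{const})$ against $|\Pc_w|\ge M$. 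My target is that the spiders whose $i$-th leg hits its anchor make up at most an $O(L^4/(C_3\alpha))$ fraction of $\F_w$, via $\lambda^{-1}M\ge C_3|B|$. Summing over $i$ and $w$ then shows that middle-anchored $Z$'s cover less than $|\F|/4$ spiders.

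\textbf{Contradiction and main obstacle.} Combining the three pieces: good leaf vectors cover at most $\varepsilon|\F|$ spiders, and the two anchored types together cover less than $|\F|/2$. This contradicts the fact that every spider has some leaf vector. The main obstacle is the middle-anchor count, because the anchor $v_Z$ depends on the whole of $Z$, including $z_i$ itself. My first attempt will be to fix $Z\setminus\{z_i\}$ and the center, and then sum over $z_i$, splitting according to whether $v_Z$ has many or few $\Pc$-paths from $w$. Condition 2 caps the first case at $\lambda M$ paths per $(w,v)$. In the second case I will use condition 4 to trade $|B|$ against $\lambda^{-1}M$. If this split is insufficient, I will instead pass first to a subfamily of $\Pc$ in which every $(w,a)$ pair carries about the same number of paths, by dyadic pigeonholing at a logarithmic cost absorbed into $C_3$.
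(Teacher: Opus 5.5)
Your Step 1 and the center-anchor bound are sound. The latter plays the same role as the paper's restriction to leaf vectors with $|\mathcal{F}'_Z|\ge C_3/8$. The gap is the middle-anchor step: the bound you are aiming for is false under the hypotheses, so no refinement of the counting can prove it.

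\textbf{A counterexample to that step.} Take disjoint sets $W,A$ of size $N$, joined by a bipartite graph in which every vertex has degree $m=CN^{1-1/q}$. Attach one private pendant vertex $b_a$ to each $a\in A$. Let $B=\{b_a:a\in A\}$ and let $\mathcal{P}$ consist of all paths $w\,a\,b_a$.
\begin{itemize}
\item Each such $2$-path is the only one between its ends, so all members of $\mathcal{P}$ are $L$-light.
\item With $M=m$ and $\lambda=1/m$, conditions 1 and 2 hold.
\item Conditions 3 and 4 reduce to $m^q\ge C_3N^{q-1}$ and $m^2\ge C_3N$. Both hold for $q\ge 2$ and $C$ large.
\end{itemize}
Every $b\in B$ has a unique neighbour, so all spiders with a given leaf vector $Z$ share their middle vertices. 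Consequently no leaf vector is even $(S_q,2)$-linked, and every $\mathcal{F}_Z$ is $(1,a_i)$-anchored for every $i$. Every spider's $i$-th leg passes through the anchor of its leaf vector. The fraction you want to be $O(L^4/(C_3\alpha))$ is therefore exactly $1$, and the middle-anchored classes carry at least $\frac34|\mathcal{F}|$ spiders.

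Splitting by path counts or dyadic pigeonholing cannot change this, since all hypotheses and lightness hold in this example. Your heuristic $L^4|B|/M$ is not small either. Condition 4 only gives $|B|\le\lambda^{-1}M/C_3$, and $\lambda$ can be as small as $1/M$.

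\textbf{What is missing: using the anchors constructively.} In the example, $H'$ does not come from linked tuples in $B$. It lives in the graph between $W$ and $A$, with branch vertices in $W$ and subdivision vertices in $A$. That graph has $Nm^2\ge C_2N^2$ two-paths centred in $A$, so Lemma~\ref{lem:subdiv-lem1} applies.

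The paper's proof does this in general, as follows.
\begin{itemize}
\item It keeps the $(\alpha,a_q)$-anchored subfamilies for one fixed leg.
\item It cleans so that every $(q-1)$-leg subspider extends in at least $\frac{\alpha}{16q}M$ ways.
\item It fixes a popular prefix $U$ of $q-1$ leaves and popular last leaves $v$.
\item It forms the bipartite graph $R$ between the set $W^*$ of centres and the set $A^*$ of anchor vertices.
\end{itemize}
Two estimates on $R$ then finish the proof.
\begin{itemize}
\item Each edge of $R$ supports at most $L^{4(q-1)}\lambda M$ spiders by condition 2, while there are $\gtrsim|W^*|M$ spiders in total. Hence $e(R)\gtrsim\lambda^{-1}|W^*|$.
\item Each $a\in A^*$ is the common image of $a_q$ for a whole family $\mathcal{F}^*_{U\vee\langle v\rangle}$. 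Hence $d_R(a)\gtrsim|W^*|M/|B|$.
\end{itemize}
Together these give $e(R)\,\delta_{A^*}\gtrsim\lambda^{-1}M|W^*|^2/|B|\ge C_3|W^*|^2$ by condition 4, and Lemma~\ref{lem:subdiv-lem1} embeds $H'$ in $R$. This is where condition 4 is genuinely used.

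(Minor point: you need no case split to get $M$ large. Condition 2 forces $\lambda M\ge 1$, so $M^2\ge\lambda^{-1}M\ge C_3|B|$.)
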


\begin{proof}

Let $C_{2}$ be the constant returned by Lemma~\ref{lem:subdiv-lem1} applied to graph $H'$.  Fix $\ve = 1/2$. Let $C_{1}$ be as in Lemma~\ref{lem:strong-spiders} applied with $H', \ve$, and $L$. For every $w \in W$, let $\cP_w$ denote the subfamily of members in $\cP$ having $w$ as the first vertex. Note that by assumption, $|\cP_w| \geq M$. Let $C_{3}$ be sufficiently large in terms of $C_{1}$, $C_{2}$, $L$, and $v(H')$. 

Note that $\lambda > 1 / M$, and so by condition 4, $M \geq \sqrt{C_{3}}$. Therefore, assuming $C_{3} \geq C_{1}^2$, we have that $|\cP_w| \geq C_{1}$. Let $\F_w$ be the family of all labeled copies of $S_q$ in $G$ whose legs are members of $\Pc_w$. Then by Lemma~\ref{lem:strong-spiders}, which we can apply since $L \geq v(H')$ and $\lambda \leq \frac{1}{2q^2}$, we have that $|\F_w| \geq \frac{1}{2} |\Pc_w|^q$, and the number of members of $\F_w$ that are $L$-strong in $G$ is at most $\ve |\F_w|$. 
Let $\F=\bigcup_{w\in W} \F_w$.
Let $\F'$ be the subfamily of all non-$L$-strong members of $\F$. 
We thus have that 

$$|\F'| \geq \frac{1}{4} \sum_{w \in W} |\Pc_w|^q \geq \frac{M^{q - 1}}{4} |\Pc| 
\geq \frac{C_{3}}{4} |B|^q,$$
where in the last step we used condition 3.
For the rest of the proof, we will use the following notation. Given a vector $U$ of at most $q$ vertices in $V$
and any subfamily $\cH$ of $\F'$, we let $\cH_U$ denote the subfamily of members of $\cH$ that map the vector
of the first $|U|$ leaves of $S_q$ to $U$. If $|U|=q$ then $\cH_U$ is precisely the subfamily of members of $\cH$
that have leaf vector $U$.
For each $Z\in [V]_q$, we say that $Z$ is good if $|\F'_Z| \geq C_{3} / 8$. Note that $\sum_{Z \text{ good }} |\F_Z'| \geq \frac{1}{2} |\F'|$.

\begin{claim}
Let $\alpha = \frac{1}{(q + 1)^2L}$.Then for each $Z\in [V]_q$ which is good, there exists an $i \in [q]$, a vertex $v \in V(G)$ such that $v$ is an $(\alpha, a_i)$-anchor for $\F_Z'$.
\end{claim}
\begin{proof}
By Lemma~\ref{lem:anchored}, there exists a nonleaf $u$ of $S_q$, a vertex $v$ in $G$, such that $v$ is $(\alpha,u)$-anchor of $\F_Z$.
Since the legs of all members of $\F_Z$ are $L$-light $2$-paths,  there can be
at most $L^{4q}<  \frac{|\F_Z|}{(q+1)^2L} \leq |\A_Z|$ many members of $\F_Z$ that map the 
central vertex $w$ of $S_q$ to $v$. Hence, we must have $u=a_i$ for some $i\in [q]$.
\end{proof}
For each good $Z$, fix an $i$ such that there is an $(\alpha, a_i)$-anchor $v_Z$. Let $c(Z) = i$
and $\A_Z$ the subfamily of members of $\F'_Z$ that map $a_i$ to $v_Z$.
By the pigeonhole principle, there is some $i\in [q]$ such that  $\sum_{Z \text{ good }: c(Z) = i} |\F_Z'| \geq \frac{1}{2q} |\F'|$,  Fix such an $i$. Without loss of generality, assume that $i = q$. 

Let \[\F''=\bigcup \left \{\A_Z: Z\in [V]_q \mbox{ is good and } c(Z) = q\right \}.\] 
Observe that $|\F''| \geq \frac{\alpha}{2q} |\F'|$. 

We clean up $\F''$ further via  the following deletion. Let $S_{q - 1}^*$ denote the subspider of $S_q$ consisting of the first $q - 1$ legs. Whenever there is a member $F \in \F''$ such that $F[S^*_{q-1}]$ is the $S_{q - 1}^*$-shadow of fewer than $\frac{\alpha}{16q} M$ members of $\F''$, remove all such members of $\F''$.
Let $\F'''$ denote the remaining subfamily of $\F''$.  Note that by definition, $|\partial_{S^*_{q - 1}}(\F'')| \leq \sum_{w \in W} |\cP_w|^{q - 1}$. Since $|\F''| \geq \frac{\alpha}{8q} \sum_{w \in W} |\Pc_w|^q
\geq \frac{\alpha}{8q}M\sum_{w\in W}|\Pc|^{q-1}\geq \frac{\alpha M}{8q}|\partial_{S^*_{q-1}}(\F'')|$,
we have that $|\partial_{S^*_{q - 1}}(\F'')| \leq \frac{8q}{\alpha M} |\F''|$. 
Therefore, the number of members we removed in the above process is at most $|\partial_{S^*_{q-1}}(\F)| \cdot \frac{\alpha}{16q} M\leq
\frac{1}{2}|\F''|$. Hence, $|\F'''| \geq \frac{1}{2} |\F''|$. 

By the pigeonhole principle, there exists a $U \in [B]_{q - 1}$ so that $|\F_U'''| \geq |\F'''||B|^{-(q - 1)}$. 
For a vertex $v\notin U$, we call $v$ {\it good} if 
$|\F'''_{U \vee \tuple{v}}| \geq \frac{1}{2} |\F_U'''| |B|^{-1}\geq \frac{1}{2}|\mathcal{F}'''||B|^{-q}$. 
Let $\F^*_U$ be the union of $\F'''_{U \vee \tuple{v}}$ over all good $v$.
Note that $|\F^*_U| \geq \frac{1}{2}|\F_U'''|$.

Let $W^*$ be the set of images of the center $w$ over members of $\F^*_U$, and $A^*$ the set of images of $a_q$ over members of $\F^*_U$. Let $R$ be the bipartite graph with parts $W^*, A^*$ by placing an edge between $v \in W^*$ and $a \in A^*$ if there is some member $F \in \F^*_U$ that maps $w$ to $v$ and $a_q$ to $a$. 

We make two claims about $R$ which we will use in conjunction with Lemma~\ref{lem:subdiv-lem1} to find a copy of $H'$.

\begin{claim}\label{claim:eR}
$$e(R) \geq \frac{\alpha}{32q L^{4(q - 1)}}\lambda^{-1}|W^*|.$$
\end{claim}
\begin{proof}

Consider any $v\in W^*$. Then some member $F$ of $\F^*_U\subseteq \F'''_U$ maps $w$ to $v$.  Note that
$F[S^*_{q-1}]$ has center $v$ and leaf vector $U$.
By the cleaning rule we used to obtain $\F'''$, there are at least $\frac{\alpha}{16q} M$ members of $\F'''$ that
share the same $S^*_{q-1}$-shadow as $F$. Also, these members are in $\F'''_U$. Hence $|\F'''_U|\geq \frac{\alpha}{16q}|W^*|M$ and thus $|\F^*_U|\geq \frac{\alpha}{32q}|W^*|M$.

On the other hand, we can upper bound $|\F_U^*|$ as follows. Consider any $va\in E(R)$ where $v\in W^*, a\in A^*$.
Consider the members that map $w$ to $v$ and $a_q$ to $a$. Since members of $\F^*_U$ are $L$-leg-light, 
there are at most $L^{4(q-1)}$ different choices for the subtree $D$ with leaf set $U\cup\{v\}$. 
For each $D$, by condition 2 of the lemma, there are at most $\lambda M$ members of $\F$ that contain $D\cup va$.
Hence, $|\F^*_U| \leq e(R) L^{4(q - 1)} \lambda M$. Combining these two bounds gives the desired inequality. 


\end{proof}

\begin{claim}\label{claim:deltaA}
$$\delta_A \geq \max\{ C_{2}, \frac{\alpha}{32q L^{4(q - 1)}|B|}|W^*|M\}.$$
\end{claim}
\begin{proof}
Fix some $a \in A^*$, then by definition there is some good $v$ such that $a$ is the common image of $a_q$
of all members of $\F_{U \vee \tuple{v}}^*$.
Since the members of $\F$ are $L$-leg-light, $|\F_{U \vee \tuple{v}}^* |\leq L^{4(q - 1)} d_R(a)$. Thus, $d_R(a)\geq \frac{|\F^*_{U\vee\{v\}}|}{L^{4(q-1)}}$. 
Since $v$ is good and $|\F'''| \geq \frac{1}{2}|\F''|\geq \frac{\alpha}{4q}|\F'|\geq \frac{\alpha}{16q} C_{3} |B|^{q}$, we have that  $|\F^*_{U \vee \tuple{v}}| \geq \frac{1}{2}|\F'''| |B|^{-q}\geq \frac{\alpha C_{3}}{32q} $,  
and thus $d_R(a)\geq  \frac{\alpha C_{3}}{32q L^{4(q-1)}}$.

Also, in the proof of Claim~\ref{claim:eR}, we showed $|\F'''_U|\geq \frac{\alpha}{16q} |W^*|M$.
Since $v$ is good, $|\F'''_{U\vee \{v\}}|\geq \frac{1}{2}|\F'''_U||B|^{-1}\geq \frac{\alpha |W^*| M}{32 q |B|}$. 
Thus, $d_R(a)\geq \frac{\alpha |W^*| M}{32 q  L^{4(q-1)} |B|}$.
Thus, picking $C_{3}$ sufficiently large in terms of $C_{2}$, we have that the conclusion of the claim holds. 
\end{proof}

Combining these two claims and our assumption $\lambda^{-1} M \geq C_{3} |B|$, we note that 
$$\delta_A \cdot e(R) \geq \left( \frac{\alpha}{32 q L^{4(q - 1)}} \right)^2 |W^*|^2 \lambda^{-1} M |B|^{-1} \geq C_{3} \left( \frac{\alpha}{32 q L^{4(q - 1)}} \right)^2 |W^*|^2. $$
Since $C_{3}$ is sufficiently large in terms of $C_{2}$ and $\delta_A \geq C_{2}$, we may apply Lemma~\ref{lem:subdiv-lem1} to $R$ and find a copy of $H'$, as desired. 

\end{proof}

We now apply Lemma~\ref{lem:spiders-to-graph} to obtain the following embedding lemma
in the asymmetric bipartite setting. We would like to mention that for the proof of 
our main theorem, it is crucial that we develop such an embedding lemma in the asymmetric
setting. The statement and its proof for the balanced version (see Corollary~\ref{cor:subdiv}) would be much simpler.

\begin{lemma} \label{lem:subdiv-lem3}
Let $K_0>0$ be a real. Let $q\geq 2$ be an integer. Let $H$ be a bipartite graph
with a bipartition $(V_1,V_2)$ such that the neighborhood hypergraph $\N_{V_1}(H)$ induced on $V_1$
is $q$-partite. 
There is a constant $C_{4}=C_{4}(H,K_0)$ such that the following holds. Let $G$ be an $n$-vertex bipartite graph with a bipartition $(X,Y)$. Let $\delta_X,\delta_Y$ denote 
the minimum degree of a vertex in $X$ and in $Y$, respectively. 
Suppose 
\begin{enumerate}
\item $\delta_X\geq C_{4}\log n$ and $\delta_Y\geq C_{4}$,
\item $\forall x\in X, \delta_X\leq d(x)\leq K_0 \delta_X$,
\item $\delta^q_X \delta^q_Y\geq C_{4} |X|^{q-1} (\log n)^q$, 
\item $\delta_X^2\delta_Y\geq C_{4}|X| (\log n)^{2}$.
\end{enumerate}
Then $G$ contains $H'$.
\end{lemma}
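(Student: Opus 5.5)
We reduce to Lemma~\ref{lem:spiders-to-graph}. Build a family $\Pc$ of $L$-light $2$-paths $w\,a\,b$ with $w,b\in X$ and $a\in Y$, so the third-vertex set satisfies $|B|\le |X|$. Fix $L=v(H')$. The natural candidate family is all $2$-paths $xyx'$ with $x,x'\in X$, $y\in Y$. For each $x$ there are at least $\delta_X\delta_Y$ such paths. Condition 1 would hold with $M\approx \delta_X\delta_Y$. Condition 2 asks that a fixed pair $(w,a)$ carries at most $\lambda M$ paths, i.e.\ $d(a)\le \lambda\delta_X\delta_Y$. Condition 3 becomes $|\Pc|M^{q-1}\gtrsim |X|\,(\delta_X\delta_Y)^q\ge C|X|^q$, which is exactly hypothesis 3 up to logs. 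Condition 4 becomes $\lambda^{-1}M\ge C|X|$. Two things remain: control the degrees of $Y$-vertices, which are not bounded above, and ensure lightness.

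\emph{Step 1: lightness.} First discard $L$-heavy $2$-paths. If many $2$-paths $xyx'$ are $L$-heavy (the pair $x,x'$ has at least $L^4$ common neighbours), apply Lemma~\ref{lem:subdiv-lem1}, counting paths centred in $Y$. Its hypothesis is that the number of such paths is at least $C_2|X|^2$, which holds by hypothesis 4 since the total count is at least $|X|\delta_X\delta_Y\gg |X|^2$. More precisely, the light paths number at most $L^4|X|^2$, which is negligible. So I would rather argue directly as follows. If at least half of the paths through some large portion of $X$ are heavy, then the heavy subgraph already yields $H'$ via the argument of Lemma~\ref{lem:subdiv-lem1}. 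Otherwise, most paths are light. Hypothesis 4 with $\delta_Y\ge C_4$ should make the light paths dominate in the needed sense.

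\emph{Step 2: dyadic pigeonholing on $Y$-degrees.} This is where the $\log n$ factors enter. Partition $Y$ into $O(\log n)$ classes $Y_k=\{y: 2^k\le d(y)<2^{k+1}\}$. For each $x$, the edges at $x$ split among the classes, so some class $k$ receives at least $\delta_X/\log n$ of its edges for many $x$. Pigeonhole again over $x$ (using hypothesis 2, all $X$-degrees are within $K_0$) to obtain a set $W\subseteq X$ and a level $D=2^k$ such that every $w\in W$ has at least $\delta_X/(2\log n)$ neighbours in $Y_k$. Moreover, $|W|$ times this is at least a $1/\log n$ fraction of $e(G)$. Take $\Pc$ to be the light paths $w\,a\,b$ with $w\in W$ and $a\in Y_k$. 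Then $M\approx \delta_X D/\log n$. Each $(w,a)$ carries at most $2D$ paths, so $\lambda\approx \log n/\delta_X$. This is at most $1/(2q^2)$ by hypothesis 1, $\delta_X\ge C_4\log n$. Condition 4 reads $\lambda^{-1}M\approx \delta_X^2D/\log^2 n\ge C|X|$, which follows from hypothesis 4 since $D\ge\delta_Y$. Condition 3 reads $|\Pc|M^{q-1}\gtrsim |W|\,M^q$. I need $|W|M\gtrsim e(G)\,D/\log n$, and this should be comparable to $|X|\delta_X\delta_Y/\log n$. Then $|X|(\delta_X\delta_Y)^q/(\log n)^q\ge C|X|^q$ follows from hypothesis 3.

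\emph{Main obstacle.} The delicate point is in Step 2: making the chosen class retain enough paths while keeping $M$ uniform over $W$. Hypothesis 2 gives uniform $X$-degrees, but the per-vertex mass in class $k$ varies. I would pigeonhole with weights $d(x,Y_k)\cdot 2^k$, i.e.\ the path count, over both the level $k$ and a dyadic level of $d(x,Y_k)$. This costs another $\log$ factor, which is absorbed by taking $C_4$ large only if the powers of $\log n$ match. I expect this bookkeeping, and verifying that lightness removal does not destroy the lower bound $M$ per $w$ (handled by a further cleaning that deletes $w$ with too few light paths), to be the hardest part.
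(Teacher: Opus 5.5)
Your outline matches the paper's: dyadic pigeonholing of the $Y$-degrees, discarding $L$-heavy $2$-paths, pruning first vertices with too few paths, then Lemma~\ref{lem:spiders-to-graph} with $M\asymp\delta_X D/\log n$ and $\lambda\asymp\log n/\delta_X$. Your checks of conditions 1, 2 and 4 are right. However, two steps do not go through as written.

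\textbf{Lightness.} The claim that the number of $2$-paths centred in $Y$ is $\gg|X|^2$ ``by hypothesis 4'' is false. Hypotheses 3 and 4 only give $\delta_X\delta_Y\gtrsim|X|^{1-1/q}$ and $\delta_X^2\delta_Y\gtrsim|X|$, up to logarithms. Moreover, if $\delta_X\delta_Y\gg|X|$ held, Lemma~\ref{lem:subdiv-lem1} would give $H'$ at once and the lemma would be trivial. Your fallback via the Tur\'an argument of Lemma~\ref{lem:subdiv-lem1} needs a single centre where heavy pairs have density close to $1$; density $1/2$ is not enough.

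What works is the K\H{o}v\'ari--S\'os--Tur\'an argument of Lemma~\ref{lem:2-paths}, applied at each centre $y$. Since $H'\subseteq K'_{s,s}$, the heavy-pair graph on $N(y)$ is $K_{s,s}$-free, so only $O(d(y)^{2-1/s})=o(d(y)^2)$ paths through $y$ are heavy; this uses only $\delta_Y\ge C_4$. For this to control the paths you actually keep, each centre must send a substantial share of its edges to the allowed first vertices. With your $W$ (vertices grouped by their preferred class), nothing guarantees that.

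The paper secures it by pruning both sides. It takes the class $Y_i$ carrying at least $e(G)/\log n$ edges. It then repeatedly deletes $x$ of degree $<\delta_X/(4\log n)$ and $y$ of degree $<2^{i-3}\delta_Y$, losing at most half the edges. In the resulting $\widehat G$ on $A\cup B$, every $y\in B$ has degree in $[d_B/8,d_B]$, and all paths have both ends in $A$.

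\textbf{The $\log n$ budget in condition 3.} Your target $|W|M\gtrsim e(G)D/\log n$ is unattainable. With $M\asymp\delta_XD/\log n$ and $e(G)\ge|X|\delta_X$ it forces $|W|\gtrsim|X|$, while your pigeonhole gives only $|W|\gtrsim|X|/\log n$. The extra dyadic pigeonhole on $d(x,Y_k)$ costs another $\log n$ that hypothesis 3 cannot pay: its $(\log n)^q$ is exactly consumed by $q$ factors of order $1/\log n$.

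No second pigeonhole is needed. You have two options.
\begin{enumerate}
\item Lower-bound $|\Pc|$ by summing over centres: $|\Pc|\gtrsim e(W,Y_k)D\gtrsim e(G)D/\log n$, once lightness is handled and $W$ is chosen by pigeonholing the class on edges and deleting low-degree $x$. This gives $|\Pc|M^{q-1}\gtrsim|X|(\delta_X\delta_Y)^q/(\log n)^q$.
\item Do as the paper does and keep the third vertices in the pruned set $A$ as well. Then condition 3 only asks for $C_3|A|^q$, and the crude bound $|\Pc'|\ge|A|M$ suffices, since $|A|M^q\ge 2^{-11q}C_4|A||X|^{q-1}\ge C_3|A|^q$.
\end{enumerate}
In the second option the size of $A$ never needs a lower bound, which removes the obstacle you were worried about.
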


\begin{proof}
We may assume that $G$ is $H'$-free; otherwise we are done. 

Let $L = v(H')$. Let $C_{3}$ be as in Lemma~\ref{lem:spiders-to-graph} for $H', L$. Let $C_{4}$ be sufficiently large in terms of $C_{3}, q,  K_0$. 


\begin{claim}
There exist a subgraph $\widehat{G}$ with bipartition $(A, B)$ and  a family $\mathcal{F}$ of labeled copies of $S_q$ in $\widehat{G}$ centered in $A$ such there exist some $d_A, d_B$ such that $d_A=K_0\delta_X$ and $d_B\geq \delta_Y$, and $\forall x\in A,  \frac{d_A}{4 K_0\log n} \leq d_{\widehat{G}}(x)\leq d_A, \quad \forall y\in B, \frac{d_B}{8}\leq d_{\widehat{G}} (y) \leq d_B$, 
\end{claim}
\begin{proof}

First, we do  a standard dyadic regularization of $G$.  For each $i\geq 1$, let $Y_i=\{y\in Y: 2^{i-1} \delta_Y\leq d(y) < 2^{i} \delta_Y\}$.
By the pigeonhole principle,  for some $1\leq i\leq \log n$, the subgraph $G_i$ of $G$ induced by $X\cup Y_i$
has at least $\frac{e(G)}{\log n}$ edges. From $G_i$, we successively
delete vertices in $X$ whose degrees become less than $\frac{\delta_X}{4\log n}$ and vertices in $Y_i$ whose degrees become less
than $2^{i-3}\delta_Y$. Let $\widehat{G}$ denote the remaining subgraph of $G_i$.
Then 
\begin{equation} \label{eq:G*-edge-bound}
e(\widehat{G})\geq \frac{e(G_i)}{2}\geq \frac{e(G)}{2\log n}.
\end{equation}
Let $A=V(\widehat{G})\cap X, B=V(\widehat{G})\cap Y$.
Let $d_A= K_0 \delta_X$ and $d_B=2^{i}\delta_Y$.  Then
\begin{equation} \label{eq:G*-conditions}
 \forall x\in A,  \frac{d_A}{4 K_0\log n} \leq d_{\widehat{G}}(x)\leq d_A, \quad \forall y\in B, \frac{d_B}{8}\leq d_{\widehat{G}} (y) \leq d_B.
 \end{equation}

From this point on we work with $\widehat{G}$ and will drop the subscripts whenever the context is clear. By a similar proof as in  Lemma \ref{lem:2-paths}, by choosing $C_{4}$ to be sufficiently large,
we may assume that the number of $L$-heavy $2$-paths in $\widehat{G}$ centered at each $y\in B$ is at most $\frac{1}{2^8} d_B^2$. Hence the number of heavy $2$-paths centered in $B$ is at most
$|B| \frac{1}{2^8} d_B^2$. Let $\Pc$ denote the family of $L$-light $2$-paths in $\widehat{G}$ that are centered in $B$. Then
\begin{equation} \label{eq:P-lower-2}
|\Pc|\geq |B|\frac{d_B}{8}\left(\frac{d_B}{8}-1\right)- \frac{1}{2^8} |B|d_B^2 \geq  \frac{1}{2^8}d_B e(\widehat{G}) =\frac{1}{2^8}d_B \sum_{x\in A} d_{\widehat{G}}(x).
\end{equation}

\end{proof}

We clean $\Pc$ to get a subfamily $\Pc'$ by removing members as follows. Whenever there is a member $P\in \Pc$
whose first vertex $x$ is the first vertex of fewer than $\frac{1}{2^{9}} d_B d_{\widehat{G}}(x)$ many members of
$\Pc$, remove all such members. By~\eqref{eq:P-lower-2}, $|\Pc'| \geq \frac{1}{2} |\Pc|$. 

We now seek to apply Lemma~\ref{lem:spiders-to-graph} to this collection $\Pc'$. Observe that for all $x \in A$, $d_{\widehat{G}}(x) \geq \frac{d_A}{4 K_0\log n}$, so we will let $M = \frac{1}{2^{11} K_0 \log n} d_A d_B$. Since clearly each initial segment $wb$ of a member of $\Pc'$ extends to at most $d_B$ members of $\Pc'$,
we will let $\lambda = \left( \frac{1}{2^{11} K_0 \log(n)} d_A \right)^{-1}$.
By ~\eqref{eq:P-lower-2}, $|\Pc|\geq \frac{1}{2^{10} K_0\log n}|A| d_Bd_A$. This together with
assumption three of the lemma gives  
\[|\Pc'| M^{q - 1} \geq \left(\frac{1}{2^{11} K_0\log n}\right)^q |A|d_A^qd_B^q\geq \left(\frac{1}{2^{11}}\right)^q C_4 |A||X|^{q-1}\geq  C_{3} |A|^q,\] by choosing $C_4$ to be sufficiently large in terms of $C_3, q$.
So condition three of Lemma~\ref{lem:spiders-to-graph} holds. Similarly, by assumption four of the lemma, we have that 
\[\lambda^{-1} M \geq \left(\frac{1}{2^{11}K_0\log n}\right)^2 d_A^2d_B\geq \left(\frac{1}{2^{11}\log n}\right)^2\delta_X^2\delta_Y\geq 
\frac{1}{2^{22}} C_4 |X| \geq C_{3} |A|,\] 
by choosing $C_4$ to be sufficiently large in terms of $C_3, q$.
So condition four of Lemma~\ref{lem:spiders-to-graph} holds. Thus, $\widehat{G}$ contains a copy of $H'$, and thus $G$ does.  
\end{proof}

We close this section with a quick corollary of the tools we developed in this section.
Though the result is not directly used in our main proof, it might be of independent interests.


\begin{corollary} \label{cor:subdiv}
Let $q\geq 3$ be an integer.
Let $H$ be a bipartite graph with a bipartition $(V_1,V_2)$ such
that its neighborhood hypergraph $\N_{V_1}(H)$ induced on $V_1$ is $q$-partite.
Let $H'$ be the subdivision of $H$.
Then $\ex(n,H')\leq O(n^{\frac{3}{2}-\frac{1}{2q}})$.
\end{corollary}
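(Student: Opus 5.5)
The plan is to reduce Corollary~\ref{cor:subdiv} to Lemma~\ref{lem:subdiv-lem3} (alternatively, directly to Lemma~\ref{lem:spiders-to-graph}) applied to a bipartite almost-regular host. Let $G$ be an $n$-vertex $H'$-free graph with $e(G)\geq Cn^{\frac32-\frac1{2q}}$ for a large constant $C$; we aim for a contradiction.

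First, we regularize. Lemma~\ref{lem:almost-regular} gives a $K$-almost regular subgraph on $m$ vertices with at least $\frac{2C}{5}m^{1+\varepsilon}$ edges, where $\varepsilon=\frac12-\frac1{2q}$ and $m\to\infty$. Taking a max-cut bipartite subgraph and then deleting low-degree vertices, we may assume $G$ is bipartite with parts $(X,Y)$, $m$ vertices, minimum degree $d\geq c\,C m^{\varepsilon}$, and maximum degree at most $Kd$ (after adjusting $K$). This uses the standard trick that deleting vertices of degree below half the average keeps almost-regularity up to constants.

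Next, we apply Lemma~\ref{lem:subdiv-lem3} with $\delta_X,\delta_Y\ge d'$ comparable to $d$, $K_0=2K$ (say), and $|X|\le m$.

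1. Condition 1 needs $d\geq C_4\log m$, which holds since $d$ is polynomial in $m$.
2. Condition 2 is almost-regularity.
3. Condition 3 requires $d^{2q}\gtrsim C_4 m^{q-1}(\log m)^q$, that is, $d\gtrsim m^{\frac12-\frac1{2q}}(\log m)^{1/2}$.
4. Condition 4 requires $d^3\gtrsim m(\log m)^2$, that is, $d\gtrsim m^{1/3}(\log m)^{2/3}$. Since $q\geq3$, we have $\frac12-\frac1{2q}\geq\frac13$, so condition 4 follows from condition 3 up to logs.

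The main obstacle is the logarithmic loss: $d\approx Cm^{\varepsilon}$ does not beat the $(\log m)^{1/2}$ factor in condition 3. I would avoid this by bypassing Lemma~\ref{lem:subdiv-lem3}, whose logs come only from dyadic pigeonholing, and apply Lemma~\ref{lem:spiders-to-graph} directly, since our host is already almost regular. Take $\Pc$ to be the $L$-light $2$-paths centered in $Y$. By the argument of Lemma~\ref{lem:2-paths} (the $K_{s,s}$-free auxiliary graph, using $H'$-freeness with $H\subseteq K_{s,s}$), all but a small fraction of the $2$-paths are light, so $|\Pc|\gtrsim md^2$. After cleaning, each first vertex starts at least $M=cd^2$ paths. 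Each pair $(w,b)$ extends to at most $Kd$ paths, so we take $\lambda=Kd/M\approx 1/d$, which is at most $\frac1{2q^2}$ for large $d$. Condition 3 becomes $md^2\cdot d^{2(q-1)}\geq C_3 m^q$, i.e.\ $d\geq C_3^{1/2q}m^{\frac12-\frac1{2q}}$. Condition 4 becomes $d^3\gtrsim C_3 m$, which holds because $q\ge3$. Both follow by taking $C$ large, so $G$ contains $H'$, a contradiction. The remaining routine checks are that regularization preserves the exponent with constants only, and that the cleaning step keeps a constant fraction of $\Pc$.
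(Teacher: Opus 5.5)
Your final argument is correct and is essentially the paper's proof: you regularize via Lemma~\ref{lem:almost-regular}, discard $L$-heavy $2$-paths using Lemma~\ref{lem:2-paths}, clean so each first vertex starts $\Omega(d^2)$ light paths, and apply Lemma~\ref{lem:spiders-to-graph} with $M\approx d^2$ and $\lambda\approx K/d$, where $q\ge 3$ is exactly what makes condition 4 hold. You are also right that going through Lemma~\ref{lem:subdiv-lem3} would lose an unaffordable $(\log m)^{1/2}$ factor, and your max-cut step (making the host bipartite, as Lemma~\ref{lem:spiders-to-graph} formally requires) supplies a detail the paper leaves implicit.
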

\begin{proof}
Let $\ve = \frac{1}{8}$, and $L \geq  \max \{(s^2 + 2s)^{1/4}, v(H')\}$. Let $C_{3}$ be as in Lemma~\ref{lem:spiders-to-graph} applied with $H$ and $L$. Let $G$ be any $n$-vertex graph with $e(G)\geq Cn^{\frac{3}{2}-\frac{1}{2q}}$, where $C$ is a sufficiently large constant to be specified. We apply Lemma~\ref{lem:almost-regular} to find a subgraph of $G'$ on some $m$ vertices  such that for all $v\in V(G')$, $d\leq d(v)\leq Kd$ and $d \geq \frac{2}{5 K} C m^{\frac{1}{2} - \frac{1}{2 q}}$. 

By Lemma~\ref{lem:2-paths}, we have that the number of $L$-heavy $2$-paths is at most $\ve m d^2$. Let $\Pc$ be the family of non-$L$-heavy $2$-paths in $G'$. Note that as the number of $2$-paths in $G'$ is at least $\frac{1}{4} md^2$, we have that $|\Pc| \geq \frac{1}{8} m d^2$. For each vertex $v \in V(G')$, if it is the first vertex for fewer $\frac{1}{16}d^2$ members of $\Pc$, remove all paths starting at $v$ from $\Pc$. Call the resulting collection $\Pc'$ and note it is of size at least $\frac{1}{16} m d^2$. 

We seek now to apply Lemma~\ref{lem:spiders-to-graph} to $\Pc'$. By our cleaning, we can take $M = \frac{1}{16}d^2$.  By our maximum degree condition on $G'$, we can take $\lambda = 16 K d^{-1}$. Since $d \geq  \frac{2}{5 K} C m^{\frac{1}{2} - \frac{1}{2 q}}$ and $C$ is sufficiently large in terms of $C_{3}$, we have that $|\Pc'| M^{q - 1} \geq C_{3} m^q$. Note further that as $\lambda^{-1} M \geq \frac{1}{2^{7}K} d^3 \geq C_{3} m$, for $q \geq 3$ and $C$ sufficiently large in terms of $C_{3}$. Thus, we may apply Lemma~\ref{lem:spiders-to-graph} to $\Pc'$ to find a copy of $H'$ inside $G$. 

\end{proof}
Note that it is well-known  \cite{AKS, Furedi} that for any bipartite graph $H$, $\ex(n,H')=O(n^{\frac{3}{2}})$, while
by the work of Janzer \cite{Janzer-bipartite} it was known that $\ex(n,K'_{q,t})=O(n^{\frac{3}{2}-\frac{1}{2q}})$.
For $q\geq 3$, Corollary \ref{cor:subdiv} generalizes the result for $K'_{q,t}$.


\section{Proof of Theorem~\ref{thm:main} }  \label{sec:main-proof}
\stepcounter{theorem}

\begin{proof}[Proof of Theorem~\ref{thm:main}]

 Let $K$ be as in Lemma~\ref{lem:almost-regular}. Let $L_1$ be the constant returned by Lemma~\ref{lem:short-spiders} for our chioces of $r, t, \ell$, and $L_2$ be the constant returned by Lemma~\ref{lem:spider} for our choices of $r, t, \ell$. Let $C_{0}$ be the maximum constant $C$ and $L_3$ the maximum constant $L_3$ returned by Lemma~\ref{lem:strong-subtree-lemma} over possible inputs of $D \subseteq T_{r, t}'$ and our fixed choices of $r, t, \ell$. Let $L = \max\left\{\left(3\ell(1 + r + rt)\right)^{1/2}, L_1, L_2, L_3, v(H_{r, t}^{\ell})\right\}$. Let $C_{1}$ be the constant returned by Lemma~\ref{lem:strong-spiders} applied with $L$ and $H$.  Let $C_{2}$ be the constant returned by Lemma~\ref{lem:subdiv-lem1} applied with the graph $H_{r, t}^{\ell}$. Let $C_{3}$ be the constant returned by Lemma~\ref{lem:spiders-to-graph} applied with $H = F_{r, t}^{\ell}$ and $L$.

We use the notation $c\ll f(a,b)$  (or analogously, $c\gg f(a,b)$   to mean that given  a choice of $a,b$, $c$ is chosen to be sufficiently small (large) in terms of $a$, $b$. Given $r, t, \ell$, subsequently fix a choice of constants respecting  dependencies as stated below: 
\begin{itemize}
\item  $\alpha\ll f_1(r,t,\ell,L)$,  $\lambda \ll f_2(r,t,\ell, K)$, $\eta \ll f_3(\alpha, r, t)$, $\eta_1, \eta_2  \ll f_4(r,t, L,  K, \eta, C_{2})$ , $\gamma \ll f_5( r, t, L,K, \eta_1)$ 
\item   $C_{5}\gg f_6(L, r, t,  C_{1}, C_{2}, C_{4})$, $C_{6} \gg f_7(  t, \alpha, \gamma, C_{2}, C_{3})$, $C_{7}\gg f_6( r, t, L,  \alpha, \lambda)$,
\item $C_{8}\gg f_8(r, t, L, K, \alpha, \eta, \eta_1, \eta_2, C_{2}, C_{4},  C_{5},  C_{6}, C_{7})$, 
\end{itemize}
where  in the last lines $C_{4}$ is the constant returned by Lemma~\ref{lem:subdiv-lem3}, with $K_0 = 4 \eta^{-1}_1 K$, $q = t + 1,$ and $H = F_{r, t}^\ell$.

Let $C_{9} = 5 K C_{8}$ and $G$ be a graph on $N$ vertices with no copy of $H_{r, t}^\ell$ for $N$ sufficiently large. We seek to show that $e(G) < C_{9}N^{1 + \frac{rt - 1}{2(rt + r)}}$. 
Suppose for contradiction that  $e(G) \geq  C_{9}N^{1 + \frac{rt - 1}{2(rt + r)}}$. 

We start by applying Lemma~\ref{lem:almost-regular} to find a bipartite subgraph $G'$ of $G$ on some $n$ vertices 
such that for all $v\in V(G')$, 
\[ d\leq d(v)\leq Kd,\]
where $d\geq C_{8} n^{\frac{rt-1}{2rt+2r}}$ and $C_{8} = \frac{ C_{9}}{5K}$.
(Lemma~\ref{lem:almost-regular} doesn't necessarily return a bipartite subgraph, but
at a cost of a factor of $1/2$, one may obtain a bipartite one.)
By taking $N$ sufficiently large, we can make $n$ and $d$ sufficiently large.

Throughout the proof, we use the following notation. Given any family $\G$ of labeled copies
of $T'_{r,t}$ and a vector $U$ of at most $rt$ vertices in $G'$, we let $\G_U$ the subfamily of
members of $\G$ that map the first $|U|$ leaves of $T'_{r,t}$ to $U$. So, in particular,
for any $Z\in [V(G')]_{rt}$, $\G_Z$ is the subfamily of members of $\G$ that have leaf vector $Z$.

Let $\F$ be the family of all labeled copies of $T_{r, t}'$ in $G'$. For $n$ sufficiently large,
by a simple greedy process, one can show that
\begin{equation}\label{eq:F-lower}
|\F|\geq  n \left( \frac{d}{2} \right)^{2rt+2r}.
\end{equation}

By Lemma~\ref{lem:2-paths}, Lemma~\ref{lem:short-spiders}, and Lemma~\ref{lem:spider} with $\ve = \frac{1}{(32K)^{2(r + rt)}}$, 
for $d$ sufficiently large,
the number of members of $\F$ that contains an $L$-heavy $2$-path, or an $L$-heavy copy of
$S_{t+1}^-$, or an $L$-heavy copy of $S_{t+1}$ is at most
\[ \ve [2^{2rt + 2r} nd^2 (Kd)^{2rt+r-2} + 2^{2rt + 2r} n d^{2r-1} (Kd)^{2rt+1} + 2^{2rt + 2r} n d^{2r}(Kd)^{2rt}]\]


Let $\F^{(1)}$ be obtained from $\F$ be removing all members that contain an $L$-heavy $2$-path,
or an $L$-heavy $S_{t+1}^-$ or an $L$-heavy $S_{t+1}$. For $d$ sufficiently large, we have
\begin{equation}\label{eq:F1-lower}
|\F^{(1)}|\geq  \frac{1}{2}n \left( \frac{d}{2} \right)^{2rt+2r}.
\end{equation}

Let $\cH$ be obtained from $\F^{(1)}$ by removing
from $\F^{(1)}$
subfamilies $\F^{(1)}_Z$ with $|\F^{(1)}_Z|\leq \frac{1}{4}|\F^{(1)}| n^{-rt} $.

By our cleaning process, we have
\begin{equation} \label{eq:H-lower}
|\cH|\geq \frac{1}{2}|\F^{(1)}| 
\end{equation}
\begin{equation} \label{eq:H-lower-part2}
\forall Z\in [V(G')]_{rt}  \mbox{ if }  \cH_Z\neq \emptyset  \mbox{ then } 
|\cH_Z|\geq \frac{1}{4} |\F^{(1)}|n^{-rt}\geq \frac{1}{2} |\cH|n^{-rt}, 
\end{equation}

Next, we define types of members of $\cH$ and define subfamilies of $\cH$ accordingly.
Recall that $T'_{r.t}$ has edge set $E=\{wx_i: i\in [r]\}\cup \{x_iy_i: i\in [i]\}\cup\{y_iv_{ij}: i\in [r], j\in [t]\}\cup \{v_{i,j}z_{i,j}: i\in [r], j\in [t]\}$. 
Consider any $Z\in [V(G')]_{rt}$ such that $\cH_Z\neq \emptyset$.
If $\cH_Z$ has an $(\alpha,x_i)$-anchor $v$ for some $i\in [r]$, we fix such $i$ and $v$,  let $c(Z)=i$, $a(Z)=v$,
and let $\A_Z$ be the subfamily of members of $\cH_Z$ that map $x_i$ to $v$.
We say that $\cH_Z$ is of type 1. By definition, $|\A_Z|\geq \alpha|\cH_Z|$.
If $\cH_Z$ is not of type 1, but  has an $(\alpha,y_i)$-anchor $v$ for some $i\in [r]$, we
fix such $i,v$, let $c(Z)=i$, $a(Z)=v$, and let $\A_Z$ be the subfamily of $\cH_Z$ that map
$y_i$ to $v$. By definition, $|\A_Z|\geq \alpha|\cH_Z|$. We say that $\cH_Z$ is of type 2.
If $\cH_Z$ is neither of type 1 nor of type 2, we say that $\cH_Z$ is of type 3. Define

\begin{eqnarray*}
\cH^{(1)}&=&\bigcup \left\{\cH_Z: \cH_Z \mbox{ is of type 1} \right\},\\
\cH^{(2)}&=&\bigcup \left\{\cH_Z: \cH_Z \mbox{ is of type 2}  \right\},\\
\cH^{(3)}&=&\bigcup \left \{\cH_Z: \cH_Z  \mbox{ is of type 3} \right \}.
\end{eqnarray*}

To prove the theorem, we need to consider two main cases. 
The first case is the most technical case and will involve subscases.
Throughout the rest of the discussion, for ease of discussion, we will occasionally abuse notation and use the same letter, say $U$,
to denote both a vector of vertices and the underlying unordered set of vertices. 
We say a vector $U$ of vertices is disjoint from another vector $V$ of vertices, if the two unordered sets are disjoint.
\medskip

\subsection{The case where $|\cH^{(1)}|\geq \frac{1}{3}|\cH|$ or $|\cH^{(2)}|\geq \frac{1}{3} |\cH|$}
Recall that for each $Z\in [V(G')]_{rt}$ such that $\cH_Z$ is of type 1 or $2$, $c(Z)\in [r]$, $a(Z)$, and
$\A_Z$ are defined. If $|\cH^{(1)}|\geq \frac{1}{3}|\cH|$, let $\G=\cH^{(1)}$. If $|\cH^{(1)}|<\frac{1}{3}|\cH|$
but $|\cH^{(2)}|\geq \frac{1}{3}|\cH|$ instead,
then let $\G=\cH^{(2)}$.
By the pigeonhole principle, there is some $i\in [r]$ such that 
$|\bigcup \{\cH_Z:  \cH_Z\subseteq \G, c(Z)=i\}| \geq \frac{1}{r} |\G|$. Fix such an $i$. Without loss of generality, suppose $i=r$.
Let \[\G^{(1)}=\bigcup \{\A_Z:  \cH_Z\subseteq \G, c(Z)=r\}.\] 
By definition,
$|\G^{(1)}|\geq \frac{\alpha }{r}|\G|\geq \frac{\alpha}{3r}|\cH|$.

 Next, we clean up $\G^{(1)}$ further as follows. Recall the definition of $D$-shadows from Definition ~\ref{defn:D-shadow}.  Recall we may assume $\eta \leq \frac{\alpha}{2^{3 + 4rt + 4r} K^{2rt + 2r} 3r}$.
 Whenever there is a proper subtree $D$ of $T'_{r,t}$ and 
 a member $F\in \G^{(1)}$ such that its $D$-shadow $F[D]$ is the $D$-shadow of
 fewer than $\eta d^{2rt+2r - e(D)}$ members of $\G^{(1)}$, remove all such members.
 The number of members removed is at most $\sum_D n(Kd)^{e(D)}\eta d^{2rt+2r-e(D)}\leq \eta n (\sum_D K^{e(D)})d^{2rt+2r}$, where the sums are over all proper subtrees $D$ of $T'_{r,t}$. Since $\eta$ is chosen to be sufficiently small, this is at most $\frac{1}{2}|\G^{(1)}|.$
 Let $\G^{(2)}$ denote the remaining subfamily of $\G^{(1)}$. 
Then $|\G^{(2)}|\geq \frac{1}{2}|\G^{(1)}|$.

By the pigeonhole principle, there exists $U \in [V(G)]_{(r - 1)t}$ such that $|\G^{(2)}_U| \geq |\G^{(2)}|n^{-(r - 1)t}$. Let us fix such a $U$.
Given two disjoint vectors $U,V$ let $U\vee V$ denote the vector obtained by attaching $V$ to the end of $U$.

For any $V \in [V(G)]_t$ that is disjoint from $U$, we say $V$ is {\it good} if $|\G^{(2)}_{U \vee V}| \geq \frac{1}{2}|\G^{(2)}_U|n^{- t}$. 
Let $\G^* = \bigcup_{V \text{ is good}} \G^{(2)}_{U \vee V}$. Note that $\G^*_U = \G^*$, and to emphasize the constrained structure of $\G^*$, we will refer to it as $\G^*_U$ throughout the rest of this work. Observe $|\G^*_U| \geq \frac{1}{2} |\G^{(2)}_U|$.   Note that by ~\eqref{eq:F1-lower}
and prior discussions,

\begin{equation}\label{eq:Gu-lower}
|\G^{(2)}_U|\geq \frac{\alpha}{24 r} n\left(\frac{d}{2}\right)^{2rt+2r}n^{-(r-1)t}=\frac{\alpha}{24 r}n^{1-(r-1)t} \left(\frac{d}{2}\right)^{2rt+2r}. 
\end{equation}

Let 
\begin{eqnarray*}
W &=& \mbox{ the set of images of $w$ in members of $\G^*_U$} \\
X &=& \mbox{ the set of images of $x_r$ in members of $\G^*_U$}\\
Y &=& \mbox{ the set of images of
$y_r$ in members of $\G^*_U$}.
\end{eqnarray*}
 
\begin{claim} \label{claim:GU-lower}
We have $|\G^{(2)}_U|\geq \max\{\eta |W|d^{2t+2}, \eta |X| d^{2t + 1}, 2C_5n^t\}$ and $|\G_U^*| \geq  \max \{ \frac{\eta}{2}|W|d^{2t + 2}, C_{5}n^t \}$. 
\end{claim}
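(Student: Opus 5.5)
Three lower bounds on $|\G^{(2)}_U|$ are needed; the bounds for $\G^*_U$ then follow from $|\G^*_U|\ge \frac12|\G^{(2)}_U|$. Two of them come from the cleaning rule that produced $\G^{(2)}$, and the third comes from \eqref{eq:Gu-lower} together with the choice of $d$.

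For the bound involving $W$, fix $w'\in W$ and pick a member $F\in\G^*_U\subseteq\G^{(2)}_U$ mapping $w$ to $w'$. Let $D$ be the proper subtree of $T'_{r,t}$ obtained by deleting the last leg $\{x_r,y_r,v_{r,j},z_{r,j}: j\in[t]\}$. Then $e(D)=2rt+2r-(2t+2)$, and $F[D]$ has center $w'$ and leaf vector $U$. By the cleaning rule, $F[D]$ is the $D$-shadow of at least $\eta d^{2t+2}$ members of $\G^{(2)}$. Each of these maps $w$ to $w'$ and the first $(r-1)t$ leaves to $U$, so all of them lie in $\G^{(2)}_U$. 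Since the map $w$-image is a function on these families, the families obtained for distinct $w'$ are disjoint. Summing over $w'\in W$ gives $|\G^{(2)}_U|\ge \eta|W|d^{2t+2}$.

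For the bound involving $X$, I argue in the same way with $D'=D\cup\{wx_r\}$. This tree has $e(D')=2rt+2r-(2t+1)$, and it determines both the image of $x_r$ and $U$. Distinct images $x'\in X$ again give disjoint families, so $|\G^{(2)}_U|\ge \eta|X|d^{2t+1}$.

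For the third bound, I use \eqref{eq:Gu-lower}: $|\G^{(2)}_U|\ge \frac{\alpha}{24r}2^{-2rt-2r}n^{1-(r-1)t}d^{2rt+2r}$. Substituting $d\ge C_8 n^{\frac{rt-1}{2rt+2r}}$ gives $d^{2rt+2r}\ge C_8^{2rt+2r}n^{rt-1}$. The exponent of $n$ then becomes $1-(r-1)t+rt-1=t$, so $|\G^{(2)}_U|\ge \frac{\alpha}{24r}2^{-2rt-2r}C_8^{2rt+2r}n^t\ge 2C_5n^t$, since $C_8\gg C_5,\alpha,r,t$. This is precisely where the exponent of the main theorem is calibrated.

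Halving the three bounds gives $|\G^*_U|\ge\max\{\frac\eta2|W|d^{2t+2},C_5n^t\}$.

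The only point needing care is checking that all members sharing a $D$-shadow really lie in $\G^{(2)}_U$. This holds because $D$ contains the first $(r-1)t$ leaves in their labeled positions, so any member with that shadow maps those leaves to $U$. Everything else is bookkeeping.
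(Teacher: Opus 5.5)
Your proposal is correct and follows the paper's proof essentially line by line. The $W$- and $X$-bounds come from the $\eta d^{2rt+2r-e(D)}$ cleaning rule applied to the subtree spanned by $U$ together with the image of $w$ (respectively $x_r$), and the $2C_5 n^t$ bound comes from \eqref{eq:Gu-lower} with $d\ge C_8 n^{\frac{rt-1}{2rt+2r}}$; halving via $|\G^*_U|\ge\frac12|\G^{(2)}_U|$ then gives the bounds for $\G^*_U$. Your explicit remark that the families for distinct images of $w$ (or of $x_r$) are disjoint is a detail the paper leaves implicit.
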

\begin{proof}
Let $v \in W$. By definition, there exists $F\in \G_U^*$ which maps $w$ to $v$.
Let $D$ be the subtree of $F$ with leaf set  $\{v\}\cup U$. By our cleaning rule
in obtaining $\G^{(2)}$,
$D$ is contained  in at least $\eta d^{2t + 2}$ many members of $\G^{(2)}$, all of which are also members of $\G^{(2)}_U$. Since this holds for each $v\in W$, $|\G^{(2)}_U| \geq \eta |W|d^{2t + 2}$. Hence, $|\G_U^*| \geq \frac{\eta}{2}|W|d^{2t + 2}$. 

Let $x \in X$. There exists $F\in \G_U^*$ that maps to $x_r$ to $x$. 
Let $D$ now be the subtree of $F$ with leaf set  $\{x\}\cup U$. Similar to before, $D$
is contained  in at least $\eta d^{2t + 1}$ many members of $\G^{(2)}$, all of which 
are also in $\G^{(2)}_U$. Since this holds for each $x\in X$,
$|\G^{(2)}_U| \geq \eta |X|d^{2t + 1}$. 

Note that by the above discussion and the assumption $C_8$ is sufficiently large compared to $C_5$, 
\[|\G^{(2)}_U| \geq \frac{\alpha}{24 r} n^{ 1 - (r - 1)t} \left(\frac{d}{2}\right)^{2rt+2r} \geq \frac{\alpha}{24 r} \left(\frac{C_{8}}{2}\right)^{2rt+2r}n^t \geq 2 C_{5} n^t.\]
Thus, $|\G^*_U|\geq C_5n^t$.
\end{proof}
\medskip

\begin{claim} \label{claim:multiplicity}
Let $v,x,y$ be vertices in $G'$. Let $V\in [V(G')]_t$ be disjoint from $U$.
Then 
\begin{enumerate}
\item There are at most $(Kd)^{2t+2}L^{4(t+1)^2(r-1)}$ members of $\G^*_U$ that map $w$ to $v$.
\item There are at most $(Kd)^{2t+1}L^{4(t+1)^2 (r-1)}$ members of $\G^*_U$
that map $w$ to $v$ and $x_r$ to $x$. 
\item There are at most $(Kd)^{2t} L^{4(t+1)^2(r-1)}$ members of $\G^*_U$
that map $w$ to $v$, $x_r$ to $x$ and $y_r$ to $y$.
\item There are at most $L^{4(t+1)^2(r-1)+(2t+1)^2}$ members of $\G^*_{U\vee V}$
that map $w$ to $v$, $x_r$ to $x$.
\item There are at most $L^{4(t+1)^2(r-1)+4t}$ members of $\G^*_{U\vee V}$
that map $w$ to $v$, $x_r$ to $x$ and $y_r$ to $y$.
\end{enumerate}
\end{claim}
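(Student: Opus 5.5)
The idea is to count each member $F\in\G^*_U$ by decomposing it into the part hanging off $U$ (the first $r-1$ branches of $T'_{r,t}$, rooted at $w$) and the $r$-th branch (the path $w x_r y_r$ together with the $t$ legs $y_r v_{r,j} z_{r,j}$). We use that every member of $\G^*_U\subseteq\cH\subseteq\F^{(1)}$ contains no $L$-heavy $2$-path, no $L$-heavy $S_{t+1}^-$ and no $L$-heavy $S_{t+1}$.

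The common ingredient is a bound on the first part. Fix $v$ as the image of $w$; the leaves $U$ are fixed. For each $i\le r-1$, the $i$-th branch is the path $w x_i y_i$ together with the legs $y_iv_{i,j}z_{i,j}$, $j\in[t]$. Its leaves $z_{i,1},\dots,z_{i,t}$ are prescribed by $U$.
\begin{itemize}
\item First we bound the choices of $y_i$. Viewing the branch as a copy of $S_{t+1}$ centred at $y_i$ with leaf vector $\tuple{z_{i,1},\dots,z_{i,t},v}$ does not directly pin down $y_i$. Instead, we use the subtree consisting of the $t$ legs from $y_i$ plus the edge $y_ix_i$, which is a copy of $S_{t+1}^-$ with leaf vector $\tuple{z_{i,1},\dots,z_{i,t},x_i}$. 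Alternatively, we use the full $S_{t+1}$ centred at $y_i$ with legs to the $z_{i,j}$ and the leg $y_ix_iw$, whose leaf vector is $\tuple{z_{i,1},\dots,z_{i,t},v}$, with all $t+1$ leaves fixed. Since this $S_{t+1}$ is $L$-light, there are at most $L^{(2t+2)^2}=L^{4(t+1)^2}$ choices for the whole branch.
\end{itemize}
Multiplying over the $r-1$ branches gives at most $L^{4(t+1)^2(r-1)}$ choices for the part determined by $U$ and $v$. This is the common factor in all five items.

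For the $r$-th branch we proceed item by item.
\begin{itemize}
\item In items 1--3 the leaves in $V$ are free. Using $\Delta(G')\le Kd$, we grow greedily: $x_r$ ($Kd$ choices), then $y_r$ ($Kd$), then each pair $v_{r,j},z_{r,j}$ ($(Kd)^2$ each). This gives $(Kd)^{2t+2}$, $(Kd)^{2t+1}$ and $(Kd)^{2t}$ respectively, depending on how many of $x_r,y_r$ are fixed.
\item In item 4, $V$ is also fixed along with $x$. The subtree formed by $y_r$, its $t$ legs and the edge $y_rx_r$ is a copy of $S_{t+1}^-$ with leaf vector $\tuple{z_{r,1},\dots,z_{r,t},x}$, all fixed. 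Since it is $L$-light, there are at most $L^{(2t+1)^2}$ choices.
\item In item 5, $y_r=y$ is fixed as well. Each leg $y v_{r,j} z_{r,j}$ is an $L$-light $2$-path with fixed endpoints, so it has at most $L^4$ choices. This gives $L^{4t}$ in total.
\end{itemize}
Multiplying each of these by the common factor yields the five stated bounds.

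The main point of care is the bound for the first $r-1$ branches. It must use a subtree with all leaves fixed once $v$ and $U$ are given, and the $S_{t+1}$ centred at $y_i$ with the long leg to $w$ is exactly such a subtree. This works because membership in $\F^{(1)}$ excludes $L$-heavy copies of $S_{t+1}$ in any position, and $L$-lightness means at most $L^{e(T)^2}$ copies per leaf vector.
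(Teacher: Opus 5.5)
Your proof is correct and follows the paper's argument. The first $r-1$ branches are $L$-light copies of $S_{t+1}$ centred at the $y_i$, whose leaves are all fixed by $U$ and $v$; this gives the common factor $L^{4(t+1)^2(r-1)}$. The $r$-th branch is then handled by greedy growth with $\Delta(G')\le Kd$ in items 1--3, by $L$-lightness of the $S^-_{t+1}$ with leaf vector $\langle z_{r,1},\dots,z_{r,t},x\rangle$ in item 4, and by $L$-lightness of the $t$ legs in item 5. The detour through $S^-_{t+1}$ with leaf $x_i$ in your first bullet is unnecessary, and it would not suffice on its own since $x_i$ is not fixed; the full $S_{t+1}$ with leaf $v$ does the work, exactly as in the paper.
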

\begin{proof}
Consider any member $F$ of $\G^*_U$ that map $w$ to $v$. The subtree $D$ of $F$ with leaf set $U\cup \{v\}$ is the union of $r-1$ copies of $S_{t+1}$
Since members of $\G^*$ contain no $L$-heavy $S_{t+1}$, there are at most $L^{4(t+1)^2(r-1)}$ 
choices of $D$. For each fixed $D$, since $G'$ has maximum degree at most $Kd$, there are certainly no more than $(Kd)^{2t+2}$ choices of $F$. Hence, there are at most $(Kd)^{2t+2}L^{4(t+1)^2(r-1)}$ members of $\G^*_U$ that map $w$ to $v$.
For similar reasons, there are at most $(Kd)^{2t+1}L^{4(t+1)^2 (r-1)}$ members of $\G^*_U$ that map $w$ to $v$ 
and $x_r$ to $x$, and  
there are at most $(Kd)^{2t} L^{4(t+1)^2(r-1)}$ members of $\G^*_U$
that map $w$ to $v$, $x_r$ to $x$ and $y_r$ to $y$.

Since members of $\G^*_U$ contain no $L$-heavy copy of $S_{t+1}$, no $L$-heavy copy of $S^-_{t+1}$ and no $L$-heavy $2$-path,
there are at most $L^{4(t+1)^2(r-1)+(2t+1)^2}$ members of $\G^*_{U\vee V}$
that map $w$ to $v$, $x_r$ to $x$ and there are at most $L^{4(t+1)^2(r-1)+4t}$ members of $\G^*_{U\vee V}$
that map $w$ to $v$, $x_r$ to $x$, and $y_r$ to $y$.
\end{proof}

We break now into our two cases. 

\subsubsection{ The subcase where $|\cH^{(1)}|\geq \frac{1}{3} |\cH|$
and $\G=\cH^{(1)}$.}

Let $B$ be the bipartite graph with parts $W$ and $X$ whose edges are
the images of $wx_r$ in the members of $\G^*_U$.
Our goal is to apply Lemma~\ref{lem:subdiv-lem1} to show that $B$ contains $H^\ell_{r,t}$.
By the definition of $X$, for each $x\in X$, there is a good $V$ such that
$x$ is an $(\alpha, x_r)$-anchor of $\G^{(2)}_{U\vee V}=\G^*_{U\vee V}$. First we show that 
\begin{claim}
    $$e(B) \geq \frac{\eta}{2K^{2t + 1}L^{4(t + 1)^2 (r-1) }}|W|d.$$
\end{claim}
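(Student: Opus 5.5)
We prove the bound by double counting $|\G^*_U|$: a lower bound from Claim~\ref{claim:GU-lower} and an upper bound from Claim~\ref{claim:multiplicity} in terms of $e(B)$.

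\emph{Lower bound.} Claim~\ref{claim:GU-lower} gives $|\G^*_U|\geq \frac{\eta}{2}|W|d^{2t+2}$.

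\emph{Upper bound.} Each member $F\in\G^*_U$ maps the edge $wx_r$ to some pair $(v,x)$. By the definition of $B$, this pair $vx$ is an edge of $B$. So we partition $\G^*_U$ according to the image of $wx_r$, and the number of classes is at most $e(B)$. By part 2 of Claim~\ref{claim:multiplicity}, each class contains at most $(Kd)^{2t+1}L^{4(t+1)^2(r-1)}$ members of $\G^*_U$. Hence
\[
|\G^*_U|\leq e(B)\,(Kd)^{2t+1}L^{4(t+1)^2(r-1)}.
\]

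\emph{Combining.} Putting the two bounds together,
\[
e(B)\geq \frac{\eta|W|d^{2t+2}}{2K^{2t+1}d^{2t+1}L^{4(t+1)^2(r-1)}}=\frac{\eta}{2K^{2t+1}L^{4(t+1)^2(r-1)}}|W|d,
\]
which is exactly the claimed bound.

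The only point to check is part 2 of Claim~\ref{claim:multiplicity}. Once $w\mapsto v$ and $x_r\mapsto x$ are fixed, two choices remain. First, the $(r-1)$-fold union of $S_{t+1}$'s with leaf vector $U\cup\{v\}$ can be chosen in at most $L^{4(t+1)^2(r-1)}$ ways. This is because no member of $\G^*_U$ contains an $L$-heavy $S_{t+1}$: any such spider is $L$-light, so at most $L^{(2t+2)^2}$ spiders share its leaf vector. Second, the remaining part of $F$ hanging from $x$ has $2t+1$ edges, giving at most $(Kd)^{2t+1}$ extensions by the maximum degree bound. This step is routine, so there is no real obstacle in this argument.
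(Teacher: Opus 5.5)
Your proof is correct and follows the paper's argument: you double count $|\G^*_U|$, using the lower bound $\frac{\eta}{2}|W|d^{2t+2}$ from Claim~\ref{claim:GU-lower} and the upper bound $e(B)\,(Kd)^{2t+1}L^{4(t+1)^2(r-1)}$ from part 2 of Claim~\ref{claim:multiplicity}. Your check of that multiplicity bound also matches the paper's reasoning.
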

\begin{proof}
By Claim~\ref{claim:GU-lower}, $|\G_U^*| \geq \frac{\eta}{2}|W|d^{2t + 2}$. 
On the other hand, for each $vx\in E(B)$ with $v\in W, x\in X$,
by Claim~\ref{claim:multiplicity}, there are at most 
$(Kd)^{2t+1}L^{4(t+1)^2 (r-1)}$ members of $\G^*_U$
that map $w$ to $v$ and $x_r$ to $x$.   Thus, 
  \[e(B)  (Kd)^{2t+1} L^{4(t + 1)^2 (r-1)}  \geq |\G^*_U| \geq \frac{\eta}{2}|W|d^{2t + 2},\] and the bound follows.    
\end{proof}

To finish, we obtain two lower bounds on $\delta_X$. 
\begin{claim}
$$\delta_X \geq \frac{\eta}{ 2L^{4(t + 1)^2(r - 1) + (2t + 1)^2}} |W| d^{2t + 2}n^{-t} \text{ and } \delta_X \geq \frac{C_{5}}{ L^{4(t + 1)^2(r - 1) + (2t + 1)^2}}.$$ 
\end{claim}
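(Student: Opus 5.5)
Fix $x\in X$ and bound $d_B(x)$ from below. By the definition of $X$, there is a good $V\in[V(G')]_t$ disjoint from $U$ such that $x=a(U\vee V)$ and $c(U\vee V)=r$. Moreover, $\G^{(2)}_{U\vee V}=\G^*_{U\vee V}$ contains the members of $\A_{U\vee V}$ that survived the cleaning, and each of them maps $x_r$ to $x$. The plan is to show that $\G^*_{U\vee V}$ has many members mapping $x_r$ to $x$, and that each neighbour $v$ of $x$ in $B$ accounts for only boundedly many of them, using Claim~\ref{claim:multiplicity}(4).

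\textbf{Step 1: lower bound on $|\G^*_{U\vee V}|$.} Since $V$ is good,
\[
|\G^*_{U\vee V}|=|\G^{(2)}_{U\vee V}|\geq \tfrac12|\G^{(2)}_U|n^{-t}.
\]
Claim~\ref{claim:GU-lower} gives $|\G^{(2)}_U|\geq \eta|W|d^{2t+2}$ and $|\G^{(2)}_U|\geq 2C_5n^t$. Hence
\[
|\G^*_{U\vee V}|\geq \max\Bigl\{\tfrac{\eta}{2}|W|d^{2t+2}n^{-t},\;C_5\Bigr\}.
\]

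\textbf{Step 2: passing to members that map $x_r$ to $x$.} We need a positive proportion of $\G^*_{U\vee V}$ to map $x_r$ to $x$. The anchor property gives this for $\A_{U\vee V}$ as a fraction of $\cH_{U\vee V}$, with $|\A_{U\vee V}|\geq\alpha|\cH_{U\vee V}|$. However, $\G^{(2)}$ is obtained from $\G^{(1)}$ only by the shadow cleaning, and $\G^{(1)}=\bigcup\A_Z$. So every member of $\G^{(1)}_{U\vee V}=\A_{U\vee V}$ maps $x_r$ to $x$, and therefore every member of $\G^{(2)}_{U\vee V}\subseteq \A_{U\vee V}$ does too. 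Thus all of $\G^*_{U\vee V}$ maps $x_r$ to $x$, and no factor $\alpha$ is lost.

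\textbf{Step 3: the multiplicity count and conclusion.} Each member of $\G^*_{U\vee V}$ maps $w$ to some $v$ with $vx\in E(B)$, because $\G^*_{U\vee V}\subseteq\G^*_U$. By Claim~\ref{claim:multiplicity}(4), for each such $v$ there are at most $L^{4(t+1)^2(r-1)+(2t+1)^2}$ members mapping $w$ to $v$ and $x_r$ to $x$. Therefore
\[
d_B(x)\,L^{4(t+1)^2(r-1)+(2t+1)^2}\geq |\G^*_{U\vee V}|.
\]
Combining this with Step 1 yields both inequalities of the claim.

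The step most likely to need care is Step 2, namely confirming that $\G^{(2)}_{U\vee V}$ really equals $\G^*_{U\vee V}$ and sits inside the anchored family. This follows from how $\G^*$ was defined, as a union over good $V$ of $\G^{(2)}_{U\vee V}$, and from $\G^{(1)}$ being a union of anchored subfamilies.
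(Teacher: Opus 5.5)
Your proposal is correct and follows the same route as the paper. Goodness of $V$ together with Claim~\ref{claim:GU-lower} gives $|\G^*_{U\vee V}|\geq \max\{\frac{\eta}{2}|W|d^{2t+2}n^{-t},\,C_5\}$; every member of $\G^*_{U\vee V}$ maps $x_r$ to the anchor $x$; and part 4 of Claim~\ref{claim:multiplicity} bounds how many members each $B$-neighbour $v$ of $x$ can account for. Your Step 2 just makes explicit, via $\G^{(2)}_{U\vee V}\subseteq \G^{(1)}_{U\vee V}=\A_{U\vee V}$, what the paper asserts in one line.
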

\begin{proof}
Now, fix any $x \in X$. By our definition of $X$, there is a good $V$ such that  $x$ is the $(\alpha, x_r)$-anchor
of $\G^*_{U\vee V}$. In other words,
$x$ is the image of $x_r$ for every $F \in \G_{U \vee V}^*$. Since $V$ is good, $|\G_{U \vee V}^*| \geq \frac{1}{2}|\G^{(2)}_U|n^{-t}$. By Claim~\ref{claim:GU-lower}, we have that $|\G_{U \vee V}^*| \geq \frac{\eta}{2} |W| d^{2t + 2}n^{-t}$. For each $vx\in E(B)$, where $v\in W, x\in X$, by Claim~\ref{claim:multiplicity},
there can be at most $L^{4(t + 1)^2(r - 1) + (2t + 1)^2}$ members of $\G^*_{U\vee V}$ that map $w$ to $v$ and $x_r$ to $x$.

Thus, 
\[d_B(x) \cdot L^{4(t + 1)^2(r - 1) + (2t + 1)^2} \geq |\G_{U \vee V}^*| \geq \frac{\eta}{2} |W| d^{2t + 2}n^{-t}.\]
Solving for $d_B(x)$ proves the first bound. 
Also, by Claim~\ref{claim:GU-lower} $|\G^*_U|\geq C_5n^t$. By the goodness of $V$, we have that $|\G^*_{U \vee V}| \geq C_{5}$. Thus, $$d_B(x) \cdot L^{4(t + 1)^2(r - 1) + (2t + 1)^2} \geq |\G_{U \vee V}^*| \geq C_{5},$$
and the second bound follows. 
\end{proof}

Now, we have

\begin{align*} e(B)\cdot \delta_X &\geq \frac{\eta}{2K^{2t + 1}L^{4(t + 1)^2(r - 1)}}|W|d \cdot \frac{\eta}{2 L^{4(t + 1)^2(r - 1) + (2t + 1)^2}} |W| d^{2t + 2}n^{-t}  \\
&\geq C_{2} |W|^2,
\end{align*}
using $d \geq C_{8} n^{ \frac{t}{2t + 3}}$, $rt \geq 2t + 3$ and our assumption $\frac{\eta^2}{4K^{2t + 1}L^{8(t + 1)^2 (r-1) + (2t + 1)^2}} C_{8}^{2t + 3} \geq C_{2}$. Since  $\delta_X \geq \frac{C_{5}}{ L^{4(t + 1)^2(r - 1) + (2t + 1)^2}} \geq C_{2}$, 
we may apply Lemma~\ref{lem:subdiv-lem1} (with $H=H^\ell_{r,t}$) to $B$ with bipartition $(W, X)$, and thus find a copy of $H_{r, t}^\ell$, completing the proof in this case. 
\bigskip

\subsubsection{ The subcase where $|\cH_2|\geq \frac{1}{3} |\cH|$
and $\G=\cH_2$}

In this subcase $\G^*_U$ is defined  relative to $\G=\cH_2$.  We now break the subcase into subsubcases.
For each $x\in X$, let $d(x,W)$ denote the number of vertices $v\in W$ such that some member of $\G^*_U$ 
map $w$ to $v$ and $x_r$ to $x$.

Let $\beta= \frac{1}{r}$. Let
\begin{eqnarray*}
X_1&=&\{x\in X: d(x,W)\leq C_{2}\}\\
X_2&=&\{x\in X: C_{2}<d(x,W)\leq n^{-\beta} |W|n^{-\frac{1}{r}}\}\\
X_3&=&X \setminus (X_1 \cup X_2 )
\end{eqnarray*}

For each $i\in [3]$, let $\J_i$ denote the subfamily of members of $\G^*_U$ that map $x_r$ into $X_i$.
Let $m\in [3]$ be the index $i$ of the largest family among $\J_1,\J_2, \J_3$. 
Then $|\J_m|\geq \frac{1}{3} |\G^*_U|$.

We now define a subfamily $\J^*_m$ of $\J_m$ as follows. 
We call a vector $V\in [V(G')]_t$ that is disjoint from $U$ \textit{good} if $|(\J_m)_{U \vee V}| \geq \frac{1}{6} |\G^*_{U}|n^{-t}$. Let $\J_m^* = \bigcup_{V \text{ good}} (\J_m)_{U \vee V}$. 
Let $X_m^*, Y_m^*$ be the set of images of $x_r, y_r$ over members of $\J_m^*$.
\medskip

{\bf Subcase 2.1} $m=1$. 

\medskip

Let $B$ be the bipartite graph with parts $X_1^*$ and $Y_1^*$ 
consisting of all the images of $x_ry_r$ of members of $\J^*_1$.
Our goal is to apply Lemma~\ref{lem:subdiv-lem3} to $B$ to find a copy of $H^\ell_{r,t}$.
This subcase is quite involved due to the highly unbalanced structure of $B$.
It is for this subcase, then we needed to develop Lemma~\ref{lem:subdiv-lem3}.
First, we give a lower bound on $e(B)$.

\begin{claim}
$$e(B) \geq \eta_1 |X^*_1|d. $$
with $\eta_1 \leq \frac{\eta }{12 C_{2}K^{2t} \cdot L^{4(t + 1)^2(r - 1)} }$.
\end{claim}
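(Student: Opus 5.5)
Double-count $\J^*_1$: a lower bound on $|\J^*_1|$ in terms of $|X^*_1|\,d^{2t+1}$, and an upper bound on $|\J^*_1|$ in terms of $e(B)\,d^{2t}$. Dividing the two gives the claim. Since $\J^*_1\subseteq \J_1\subseteq \G^*_U\subseteq \G^{(2)}_U$, every member of $\J^*_1$ passed the cleaning rule defining $\G^{(2)}$.

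For the lower bound, fix $x\in X^*_1$ and some $F\in\J^*_1$ mapping $x_r$ to $x$. Let $D$ be the subtree of $F$ whose leaf vector is $U\vee\langle x\rangle$; it consists of the $r-1$ full branches of $T'_{r,t}$ together with the edge $wx_r$. By the cleaning rule, $F[D]$ is the $D$-shadow of at least $\eta d^{2t+1}$ members of $\G^{(2)}$. Each of these lies in $\G^{(2)}_U$ and maps $x_r$ to $x$.

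The difficulty is that these extensions need not lie in $\J^*_1$, because their last $t$ leaves might form a non-good vector $V$. I would sidestep this by working with the appropriate family throughout. One option is to use the extensions inside $\G^{(2)}_U$ and note that the edge set of $B$ is defined from $\J^*_1$. I would therefore instead count via $\J_1$ and the goodness threshold. Summing over good $V$ gives $|\J^*_1|\ge \frac12|\J_1|\ge \frac16|\G^*_U|$. The same averaging as in Claim~\ref{claim:GU-lower} gives $|\G^*_U|\ge \frac{\eta}{2}|X|d^{2t+1}$, using $|\G^{(2)}_U|\ge \eta|X|d^{2t+1}$ and $|\G^*_U|\ge \frac12|\G^{(2)}_U|$. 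Since $X^*_1\subseteq X$, this yields $|\J^*_1|\ge \frac{\eta}{12}|X^*_1|d^{2t+1}$.

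For the upper bound, fix an edge $xy\in E(B)$. A member of $\J^*_1$ mapping $x_r\mapsto x$ and $y_r\mapsto y$ has its $w$-image among the $d(x,W)\le C_2$ neighbours allowed by the definition of $X_1$. For each such choice $v$ of $w$, Claim~\ref{claim:multiplicity}(3) bounds the number of members of $\G^*_U$ mapping $w\mapsto v$, $x_r\mapsto x$, $y_r\mapsto y$ by $(Kd)^{2t}L^{4(t+1)^2(r-1)}$. Hence
\[|\J^*_1|\le e(B)\,C_2\,(Kd)^{2t}L^{4(t+1)^2(r-1)}.\]
Combining with the lower bound gives
\[e(B)\ge \frac{\eta}{12C_2K^{2t}L^{4(t+1)^2(r-1)}}|X^*_1|d,\]
which is at least $\eta_1|X^*_1|d$ by the choice of $\eta_1$.

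The only delicate step is getting the lower bound in terms of $|X^*_1|$ rather than $|X|$; the resulting loss of $|X|$ versus $|X^*_1|$ is harmless. The key input is the bound $d(x,W)\le C_2$ coming from $X_1$.
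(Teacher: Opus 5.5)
Your proof is correct and is essentially the paper's argument. The paper also lower-bounds $|\J^*_1|\ge \frac{1}{12}|\G^{(2)}_U|\ge \frac{\eta}{12}|X^*_1|d^{2t+1}$ via the $\eta$-shadow cleaning rule, applying it directly to each $x\in X^*_1$ rather than going through all of $X$ as you do. It then upper-bounds $|\J^*_1|$ by $e(B)\,C_2(Kd)^{2t}L^{4(t+1)^2(r-1)}$, using $d(x,W)\le C_2$ and part 3 of Claim~\ref{claim:multiplicity}, exactly as you do.
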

\begin{proof}

Let $x \in X_1^*$, then there is an $F \in \J_1^*$ that maps $x_r$ to $x$. Letting $D$ be the subtree of $F$ with leaf set $U\cup\{x\}$, by the definition of $\G^{(2)}_U$, $D$ is contained in at least $\eta d^{2t + 1}$ many members of $\G_U^{(2)}$. Thus, $|\G_U^{(2)}| \geq \eta d^{2t + 1}|X_1^*|$. 
By our assumption on $m$, $|\J_1^*|\geq \frac{1}{12}|\G^{(2)}_U|$. Hence,
$|\J_1^*| \geq \frac{\eta}{12} |X_1^*| d^{2t + 1}$. 
Consider now any $xy\in E(B)$ where $x\in X^*_1, y\in Y^*_1$. Since $X^*_1\subseteq X_1$, there are at most $C_{2}$ different vertices $v\in W$ such that some member of $\G^*_U$ maps $w$ to $v$, $x_r$ to $x$. For each such $v$, by Claim~\ref{claim:multiplicity},
there are at most $(Kd)^{2t} L^{4(t + 1)^2(r - 1)}$ members of $\G^*_U$, and thus of $\J_1^*$, that map $w$ to $v$, $x_r$ to $x$, and $y_r$ to $y$.
Thus, 
\[e(B) \cdot  C_{2} (Kd)^{2t} L^{4(t + 1)^2(r - 1)} \geq 
|\J_1^*|\geq \frac{\eta}{12} |X_1^*|d^{2t+1}.\]
The claim follows.
\end{proof}

Next, we obtain a minimum degree bound for $y \in Y_1^*$. 

\begin{claim}
We have that for every $y \in Y_1^*$, 
$$d_B(y) \geq \max \left\{ \eta_2 |X_1^*| d^{2t + 1}n^{-t}, 4C_{4}\right\},$$

with $
\eta_2 \leq \frac{\eta}{L^{4(t + 1)^2(r - 1) + 4t} C_{2}}$

\end{claim}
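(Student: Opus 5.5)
Fix $y\in Y_1^*$. By the definition of $Y_1^*$, some member $F\in\J_1^*$ maps $y_r$ to $y$, and $F\in(\J_1)_{U\vee V}$ for some good vector $V$. Since we are in the type 2 case, $\G^{(2)}_{U\vee V}$ is contained in the $(\alpha,y_r)$-anchored family $\A_{U\vee V}$. Every member of $\G^{(2)}_{U\vee V}$ therefore maps $y_r$ to the same anchor $a(U\vee V)$, and this anchor must be $y$. In particular every member of $(\J_1)_{U\vee V}$ maps $y_r$ to $y$. The plan is to double count $(\J_1)_{U\vee V}$ against the edges of $B$ at $y$, as in the type 1 subcase, where $\delta_X$ was bounded via $\G^*_{U\vee V}$.

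\emph{Lower bound on $|(\J_1)_{U\vee V}|$.} Since $V$ is good for $\J_1$, we have $|(\J_1)_{U\vee V}|\ge \frac16|\G^*_U|n^{-t}$. To bring $X_1^*$ into this bound I use the estimate already obtained in the previous claim: each $x\in X_1^*$ gives, through the cleaning rule defining $\G^{(2)}$, at least $\eta d^{2t+1}$ members of $\G^{(2)}_U$. Hence $|\G^{(2)}_U|\ge \eta|X_1^*|d^{2t+1}$, and so $|\G^*_U|\ge \frac{\eta}{2}|X_1^*|d^{2t+1}$. Combining, $|(\J_1)_{U\vee V}|\ge \frac{\eta}{12}|X_1^*|d^{2t+1}n^{-t}$. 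For the constant term, Claim~\ref{claim:GU-lower} gives $|\G^*_U|\ge C_5 n^t$, so $|(\J_1)_{U\vee V}|\ge C_5/6$.

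\emph{Upper bound on $|(\J_1)_{U\vee V}|$.} Every member of $(\J_1)_{U\vee V}$ maps $x_r$ to a neighbour $x$ of $y$ in $B$, since $x_ry_r$ maps to an edge of $B$. I expect this to be the main obstacle, because a priori the center $w$ is not controlled. Here the definition of $X_1$ does the work: since $x\in X_1$, there are at most $C_2$ choices of the image $v$ of $w$. For each such triple $(v,x,y)$, part 5 of Claim~\ref{claim:multiplicity} gives at most $L^{4(t+1)^2(r-1)+4t}$ members of $\G^*_{U\vee V}$ mapping $w,x_r,y_r$ to $v,x,y$. Moreover $(\J_1)_{U\vee V}\subseteq \G^*_{U\vee V}$, and $\G^*_{U\vee V}$ is nonempty only for $V$ good in the earlier sense, which holds since $(\J_1)_{U\vee V}$ is nonempty. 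This yields
\[
|(\J_1)_{U\vee V}|\le d_B(y)\, C_2\, L^{4(t+1)^2(r-1)+4t}.
\]

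\emph{Conclusion.} Comparing the two bounds gives $d_B(y)\ge \frac{\eta}{12 C_2L^{4(t+1)^2(r-1)+4t}}|X_1^*|d^{2t+1}n^{-t}$, which is at least $\eta_2|X_1^*|d^{2t+1}n^{-t}$ once $\eta_2$ is taken a constant factor smaller, in line with the stated constraint. It also gives $d_B(y)\ge \frac{C_5}{6C_2L^{4(t+1)^2(r-1)+4t}}\ge 4C_4$, using $C_5\gg f_6(L,r,t,C_1,C_2,C_4)$. The only care needed is to verify that the anchor identification is valid, namely that $\cH_{U\vee V}$ is of type 2 with $c(U\vee V)=r$. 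This holds because $\G^{(1)}$ consists only of such $\A_Z$.
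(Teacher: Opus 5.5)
Your proposal is correct and follows essentially the same route as the paper. For a good $V$ whose members map $y_r$ to $y$, you bound $|(\J_1)_{U\vee V}|$ from below by $\frac{\eta}{12}|X_1^*|d^{2t+1}n^{-t}$ and by a constant multiple of $C_5$. You bound it from above by $d_B(y)\,C_2\,L^{4(t+1)^2(r-1)+4t}$, using the definition of $X_1$ and part 5 of Claim~\ref{claim:multiplicity}.

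You also make two points explicit that the paper passes over. First, you justify via the type-2 anchor $a(U\vee V)$ that every member of $(\J_1)_{U\vee V}$ maps $y_r$ to $y$, where the paper just says ``by definition''. Second, you note that $\eta_2$ must absorb the factor $12$, which the stated bound on $\eta_2$ omits.
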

\begin{proof}

Consider any $y\in Y_1^*$. By definition, there exists at least one good
$V\in [V(G')]_t$ disjoint from $U$ such that all members of $(\J_1^*)_{U\vee V}$ map $y_r$ to $y$
and $|(\J^*_1)_{U\vee V}|\geq \frac{1}{6}|(\G^*)_U|n^{-t}$.
By Claim~\ref{claim:GU-lower}, we have that $|\G^{(2)}_{U}| \geq \eta |X| d^{2t + 1}$. Thus, we have $|\G^*_U| \geq \frac{\eta}{2} |X| d^{2t + 1}$, and so
 $|(\J^*_1)_{U\vee V}|\geq \frac{\eta}{12} |X_1^*| d^{2t + 1} n^{-t}$. 
Consider any edge $xy\in E(B)$, where $x\in X_1^*, y\in Y^*_1$.
Since $x\in X_1$, there are at most $C_{2}$ vertices $v\in W$ for which
there are members of $\J^*_1$ that map $w,x_r,y_r$ to $v,x,y$, respectively.
For each such $v$, by Claim~\ref{claim:multiplicity}, there are at most $L^{4(t + 1)^2(r - 1) + 4t}$ members of $(\J^*_1)_{U \vee V}$ that map $w,x_r,y_r$ 
to $v,x,y$, respectively.
We thus derive  inequality

$$ L^{4(t + 1)^2(r - 1) + 4t} C_{2} d_B(y) \geq |(\J^*_1)_{U\vee V}|\geq \frac{\eta}{12} |X_1^*| d^{2t + 1} n^{-t}. $$

Similarly, by Claim~\ref{claim:GU-lower}, $|\G^*_U| \geq C_{5} n^t$, we have that $|(\J_1^*)_{U \vee V}| \geq\frac{1}{12} C_{5}$ , and thus we derive the second bound, since $ \frac{C_{5}}{12L^{4(t + 1)^2(r - 1) + 4t}C_{2}} \geq 4 C_{4}$.  
\end{proof}


We need a lower bound on the size of $|X_1^*|$. Recall $d=\Omega\left(n^{\frac{rt-1}{2rt+2r}}\right)$.

\begin{claim} \label{claim:X1-lower}
$$|X_1^*| \geq\frac{\alpha}{2^{4rt + 9} C_{2}L^{4(t + 1)^2(r - 1)}K^{2t + 1}} n^{1 - (r - 1)t} d^{2rt - 1} \geq 4^{2t + 2} \eta_1^{-(t + 1)}\eta_2^{-(t + 1)} C_{4}n^{\frac{t + 1}{r}} (\log n)^{t + 1} $$ for $t \geq 1, r\geq 2t + 3$. 
\end{claim}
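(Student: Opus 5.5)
The plan is a double count of $\J^*_1$: bound it below using the lower bounds on $|\G^{(2)}_U|$ already collected, and bound it above by $|X^*_1|$ times the number of members through a single $x\in X^*_1$. Comparing the two gives a lower bound on $|X^*_1|$, whose power of $n$ is then checked directly against $n^{\frac{t+1}{r}}$. One caveat up front: the comparison with the right-hand side only works after keeping the sharper exponent $2rt+2r-2t-1$ of $d$ that the count produces. With the printed exponent $2rt-1$ alone the second inequality is false; for example, $t=2$, $r=7$ makes that power of $n$ negative.

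\textbf{Lower bound on $|\J^*_1|$.} Since $m=1$ is the index of the largest family, $|\J_1|\ge \frac13|\G^*_U|$. Since $\J^*_1$ is obtained by keeping the good $V$, $|\J^*_1|\ge \frac12|\J_1|$. Together with $|\G^*_U|\ge \frac12|\G^{(2)}_U|$ and \eqref{eq:Gu-lower}, this gives
\[|\J^*_1|\ \ge\ \frac{1}{12}|\G^{(2)}_U|\ \ge\ \frac{\alpha}{288\,r}\,n^{1-(r-1)t}\Bigl(\frac d2\Bigr)^{2rt+2r}.\]

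\textbf{Upper bound on $|\J^*_1|$.} Fix $x\in X^*_1\subseteq X_1$. By the definition of $X_1$, at most $C_2$ vertices $v\in W$ occur together with $x$ as images of $(w,x_r)$. For each such $v$, Claim~\ref{claim:multiplicity}(2) allows at most $(Kd)^{2t+1}L^{4(t+1)^2(r-1)}$ members of $\G^*_U$, and hence of $\J^*_1$. Therefore
\[|\J^*_1|\le |X^*_1|\,C_2\,(Kd)^{2t+1}L^{4(t+1)^2(r-1)}.\]

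\textbf{First inequality.} Dividing the two bounds gives
\[|X^*_1|\ \ge\ \frac{\alpha}{2^{4rt+2r}\cdot 288\,r\,C_2L^{4(t+1)^2(r-1)}K^{2t+1}}\;n^{1-(r-1)t}\,d^{2rt+2r-2t-1}.\]
Because $r\ge t$ we have $d^{2r-2t}\ge1$, so this dominates the displayed middle term. The numerical factor $2^{2r}\cdot 288\,r$ is absorbed into $2^{9}$ together with the slack in $2^{4rt}$, since $rt\ge r$ and $t\ge 2$.

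\textbf{Second inequality.} I keep the sharper power $d^{2rt+2r-2t-1}$ and substitute $d\ge C_8 n^{\frac{rt-1}{2rt+2r}}$. The resulting exponent of $n$ is
\[1-(r-1)t+\frac{(2rt+2r-2t-1)(rt-1)}{2r(t+1)}\;=\;t-\frac{(2t+1)(rt-1)}{2r(t+1)}.\]
Multiplying through by $2r(t+1)$, the condition that this exceeds $\frac{t+1}{r}$ becomes $rt+2t+1>2(t+1)^2$, that is, $rt>2t^2+2t+1$. This holds since $r\ge 2t+3$ gives $rt\ge 2t^2+3t$, and $3t>2t+1$ for $t\ge 1$. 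So the power of $n$ on the left exceeds $\frac{t+1}{r}$ by a fixed positive margin depending only on $r,t$. That margin absorbs $(\log n)^{t+1}$ once $n$ is large. The remaining constants $4^{2t+2}\eta_1^{-(t+1)}\eta_2^{-(t+1)}C_4$ are absorbed by $C_8^{2rt+2r-2t-1}$, using that $C_8$ is chosen large in terms of $\alpha,\eta,\eta_1,\eta_2,C_2,C_4,L,K$.

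The double count is routine. The step to get right is this exponent comparison, which is exactly where the hypothesis $r\ge 2t+3$ is used and where the sharper $d$-exponent is needed.
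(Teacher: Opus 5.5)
Your double count of $\J^*_1$ (lower bound from the estimate on $|\G^{(2)}_U|$, upper bound $C_2(Kd)^{2t+1}L^{4(t+1)^2(r-1)}$ per vertex of $X^*_1$) is exactly the paper's proof. You are right that this count actually gives $d^{2rt+2r-2t-1}$ rather than the printed $d^{2rt-1}$, and that the larger exponent is what the second inequality needs; the paper never checks that step, and your reduction of it to $rt>2t^2+2t+1$ is correct. Two small slips: the power of $2$ coming from $(d/2)^{2rt+2r}$ is $2^{2rt+2r}$, not $2^{4rt+2r}$, and the inequality $3t>2t+1$ requires $t\ge 2$, not $t\ge 1$. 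At $t=1$, $r=5$ the two exponents of $n$ coincide and the factor $(\log n)^{t+1}$ is not absorbed, but this is harmless because the theorem assumes $t\ge 2$.
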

\begin{proof}
By~\eqref{eq:Gu-lower} and the definition of $\J^*_1$,
$|\J_1^*| \geq \frac{\alpha}{2^{5}3^2 r}n^{1-(r-1)t} \left(\frac{d}{2}\right)^{2rt+2r}$. 
On the other hand, $|\J_1^*| \leq  C_{2}|X^*_1| L^{4(t + 1)^2 (r - 1)}(Kd)^{2t + 1}, $ since for each choice of $x \in X_1^*$, there are at most
$C_{2}$ vertices $v$ for which some member of $\J^*_1$ map $w$ to $v$ and $x_r$ to $x$, and for each fixed choice of $v$, by Claim~\ref{claim:multiplicity}, there are at most
 $L^{4(t + 1)^2(r-1)}(Kd)^{2t+1}$ members of $\J^*_1$ that map $w$ to $v$
 and $x_r$ to $x$.
The inequality follows by combining the lower and upper bounds on $|\J_1^*|$.
\end{proof}
Given this graph $B$, we will now pass to a minimum degree subgraph $B'$ so that we can apply Lemma~\ref{lem:subdiv-lem3}. Indeed, we will iteratively remove vertices from $X^*_1$ if their degree drops below $\frac{1}{4} \frac{e(B)}{|X^*_1|}$ and vertices from $Y_1^*$ if their degree drops below $\frac{1}{4} \frac{e(B)}{|Y_1^*|}$.
Then $e(B')\geq \frac{1}{2}e(B)$. Let $\widetilde{X}=X^*_1\cap V(B')$ and $\widetilde{Y}=Y^*_1\cap V(B')$.
Then $\delta_{\widetilde{X}}\geq \frac{1}{4}\eta_1 d$ and $\delta_{\widetilde{Y}}\geq \frac{1}{4}\delta_{Y_1^*}\geq 
\max\{C_{4}, \frac{\eta_2}{4}|X_1^*|d^{2t + 1}n^{- t}\}$, 
We seek to apply Lemma~\ref{lem:subdiv-lem3} to $B'$ with $q = t + 1$, $X = \widetilde{X}$, $Y = \widetilde{Y}$, $\delta_X = \frac{\eta_1}{4} d$, $\delta_Y = \max\{C_{4}, \frac{\eta_2}{4}|X_1^*|d^{2t + 1}n^{- t}\}$, and $K_0 = 4\eta_1^{-1} K$. Also, note that  $d^{2t+2}n^{-t}=n^{-\frac{1}{r}}$. We have
\begin{align*}
\delta_X^{t+1}\delta_Y^{t+1} &\geq 4^{-2t + 2} \eta_1^{t + 1}\eta_2^{t + 1}d^{t + 1}\left(|X_1^*| d^{2t + 1}n^{- t} \right)^{t + 1} \geq 4^{-2t + 2} \eta_1^{t + 1}\eta_2^{t + 1}
d^{t + 1}\left(\frac{|X_1^*|}{dn^{\frac{1}{r}}}\right)^{t + 1}\\&\geq4^{-2t + 2} \eta_1^{t + 1}\eta_2^{t + 1} |X_1^*|^{t} \frac{|X_1^*|}{n^{\frac{t + 1}{r}}}
\geq C_{4} |X_1^*|^{t} (\log n)^{t + 1},
\end{align*}
where in the last inequality we apply our lower bound on $|X_1^*|$ from Claim~\ref{claim:X1-lower}.

Since $rt > 2t + 3$, we have that $d^{2t + 3} \geq 2^6\eta_1^{-2} \eta_2^{-1} C_{4} n^t (\log n )^{2}$ by choice of $C_{8}^{2t + 3}\geq  2^{6}\eta_1^{-2} \eta_2^{-1} C_{4}$. This implies, by plugging in the values of $\delta_X$ and $\delta_Y$, that 
\[\delta_X^2\delta_Y\geq \frac{\eta_1^2 \eta_2}{2^6} d^{2t + 3}n^{-t} |X_1^*| \geq  C_{4} |X_1^*| (\log{n} )^{2}.\]
Thus, all the conditions of Lemma~\ref{lem:subdiv-lem3} hold, and so $H_{r, t}^\ell \subseteq B$, contradicting $G$ being $H_{r, t}^{\ell}$-free. 

\bigskip

{\bf Subcase 2.2} $m=2$.

Let $\cP$ denote the set of images of $wx_ry_r$ over all members of $\J^*_2$. Note that every member of $\cP$ is an $L$-light $2$-path in $G$ by the definition of $\J^*_2 \subseteq \F^{(1)}$. 

\begin{claim} \label{claim:P-lower}
$|\cP|\geq \gamma  d^2  |W|$ with $\gamma = \frac{\eta}{12K^{2t} L^{4(t + 1)^2(r - 1)}}$.
\end{claim}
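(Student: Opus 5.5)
We will bound $|\cP|$ by double counting: a lower bound on $|\J^*_2|$ in terms of $|W|$, and an upper bound on how many members of $\J^*_2$ share a given image of the path $wx_ry_r$.

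\emph{Lower bound on $|\J^*_2|$.} By Claim~\ref{claim:GU-lower}, $|\G^*_U|\geq \frac{\eta}{2}|W|d^{2t+2}$. Since $m=2$ indexes the largest of $\J_1,\J_2,\J_3$, we have $|\J_2|\geq \frac13|\G^*_U|$. Passing to good $V$ discards at most $n^t\cdot\frac16|\G^*_U|n^{-t}=\frac16|\G^*_U|$ members, so
\[|\J^*_2|\geq \left(\tfrac13-\tfrac16\right)|\G^*_U|=\tfrac16|\G^*_U|\geq \tfrac{\eta}{12}|W|d^{2t+2}.\]

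\emph{Upper bound per path.} Fix an image $P=vxy$ of $wx_ry_r$. By part 3 of Claim~\ref{claim:multiplicity}, at most $(Kd)^{2t}L^{4(t+1)^2(r-1)}$ members of $\G^*_U\supseteq\J^*_2$ map $w,x_r,y_r$ to $v,x,y$. The reason is that the first $r-1$ branches, which form the subtree with leaf set $U\cup\{v\}$, are unions of $L$-light $S_{t+1}$'s; this gives at most $L^{4(t+1)^2(r-1)}$ choices for them. The remaining $2t$ edges of the $r$-th branch, hanging off $y$, are then chosen in at most $(Kd)^{2t}$ ways.

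\emph{Combining.} Each member of $\J^*_2$ determines exactly one path in $\cP$, so
\[|\cP|\cdot (Kd)^{2t}L^{4(t+1)^2(r-1)}\geq |\J^*_2|\geq \tfrac{\eta}{12}|W|d^{2t+2}.\]
Dividing through gives $|\cP|\geq \frac{\eta}{12K^{2t}L^{4(t+1)^2(r-1)}}d^2|W|=\gamma d^2|W|$, as claimed.

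The only delicate point is the lower bound on $|\J^*_2|$. It depends on the goodness threshold $\frac16|\G^*_U|n^{-t}$ being small enough relative to $|\J_2|\geq\frac13|\G^*_U|$, and on counting the bad $V$ by at most $n^t$. Once that is in place, the claim is a direct consequence of Claims~\ref{claim:GU-lower} and~\ref{claim:multiplicity}.
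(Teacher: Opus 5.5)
Your proof is correct and follows essentially the same route as the paper. You lower-bound $|\J^*_2|\ge \frac{\eta}{12}|W|d^{2t+2}$ via Claim~\ref{claim:GU-lower}, then divide by the bound $(Kd)^{2t}L^{4(t+1)^2(r-1)}$ from part 3 of Claim~\ref{claim:multiplicity} on how many members share an image of $wx_ry_r$; your explicit count of the loss when passing to good $V$ just spells out what the paper summarizes as ``the relative sizes of $\G^*_U$ and $\J^*_2$''.
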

\begin{proof}
By Claim~\ref{claim:GU-lower}, $|\G^{(2)}_U| \geq \eta |W| d^{2t + 2}$. By our assumption on $\J_2$ and the relative sizes of $\G^*_U$ and $\J^*_2$, we have that $|\J^*_2| \geq \frac{\eta}{12} |W| d^{2t + 2}$. On the other hand, by  Claim~\ref{claim:multiplicity}, each member of $\cP$ is the image
of $wx_ry_r$ of at most $(Kd)^{2t} L^{4(t + 1)^2(r - 1)}$ many members of $\J^*_2$. Thus, we have 
$$|\cP| (Kd)^{2t} L^{4(t + 1)^2(r - 1)} \geq \frac{\eta}{12} |W| d^{2t + 2},$$

and so the bound follows. 
\end{proof}

For each $y \in Y^*_2$, let $\cP_y$ denote the subfamily members of of $\cP$ that maps $y_r$ to $y$. Recall that $d \geq C_{8} n^{\frac{rt-1}{2rt+2r}}$.
\begin{claim} \label{claim:py-lower}
For each $y \in Y^*_2$, $|\cP_y|\geq C_{6} |W|n^{-\frac{1}{r}}$, where $C_{6} =  \frac{ \eta C_{8}^{2t + 2}}{12 L^{4(t + 1)^2(r - 1) + 4t} }$. 
\end{claim}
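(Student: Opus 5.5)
Fix $y\in Y^*_2$. The plan is to find one leaf vector $U\vee V$ whose members all pass through $y$, bound the size of $(\J_2)_{U\vee V}$ from below using goodness and Claim~\ref{claim:GU-lower}, and then divide by the multiplicity bound of Claim~\ref{claim:multiplicity}(5) to count distinct $2$-paths.

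\emph{Choosing $V$.} By the definition of $Y^*_2$, some member of $\J^*_2$ maps $y_r$ to $y$. That member lies in $(\J_2)_{U\vee V}$ for some good $V\in[V(G')]_t$ disjoint from $U$. Set $Z=U\vee V$. Since $\J_2\subseteq \G^*_U\subseteq \G^{(2)}\subseteq \G^{(1)}$, and we are in the subcase $\G=\cH^{(2)}$, we have $(\J_2)_Z\subseteq \A_Z$. Here $\A_Z$ is the $(\alpha,y_r,a(Z))$-anchored subfamily of $\cH_Z$, so every member of $(\J_2)_Z$ maps $y_r$ to the single vertex $a(Z)$. Because one of these members maps $y_r$ to $y$, we get $a(Z)=y$. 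Hence every member of $(\J_2)_Z$ maps $y_r$ to $y$, and so the image of $wx_ry_r$ in each such member belongs to $\cP_y$.

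\emph{Lower bound on $|(\J_2)_Z|$.} Goodness of $V$ gives
\[
|(\J_2)_{Z}|\geq \tfrac{1}{6}|\G^*_U|n^{-t}.
\]
Claim~\ref{claim:GU-lower} gives $|\G^*_U|\geq \frac{\eta}{2}|W|d^{2t+2}$. Combining,
\[
|(\J_2)_Z|\geq \tfrac{\eta}{12}|W|d^{2t+2}n^{-t}.
\]

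\emph{Converting to a count of paths.} Each member of $(\J_2)_Z$ lies in $\G^*_{U\vee V}$. A fixed triple $(v,x,y)$ is the image of $(w,x_r,y_r)$ for at most $L^{4(t+1)^2(r-1)+4t}$ members of $\G^*_{U\vee V}$, by part 5 of Claim~\ref{claim:multiplicity}. Therefore
\[
|\cP_y|\,L^{4(t+1)^2(r-1)+4t}\;\geq\; |(\J_2)_Z| \;\geq\; \tfrac{\eta}{12}|W|d^{2t+2}n^{-t}.
\]

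\emph{Exponent computation.} Using $d\geq C_8 n^{\frac{rt-1}{2rt+2r}}$,
\[
d^{2t+2}n^{-t}\geq C_8^{2t+2}\, n^{\frac{(rt-1)(t+1)}{r(t+1)}-t}=C_8^{2t+2}\,n^{-\frac1r}.
\]
Substituting this into the previous display yields $|\cP_y|\geq C_6|W|n^{-1/r}$, with $C_6$ exactly as stated.

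The only delicate point is the first step: checking that the anchor of $\A_Z$ is the vertex $y$ itself. This is what forces all of $(\J_2)_Z$ to contribute paths ending at $y$. Everything after that is bookkeeping with bounds already established earlier in the paper.
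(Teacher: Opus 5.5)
Your proof is correct and follows the paper's argument: you fix a good $V$ whose members send $y_r$ to $y$, bound $|(\J_2)_{U\vee V}|\ge \frac{\eta}{12}|W|d^{2t+2}n^{-t}$ using goodness and Claim~\ref{claim:GU-lower}, divide by the multiplicity bound in part 5 of Claim~\ref{claim:multiplicity}, and use $d^{2t+2}n^{-t}\ge C_8^{2t+2}n^{-1/r}$. Your anchor argument showing $a(U\vee V)=y$ spells out a step the paper only asserts ``by definition'', and writing $d\ge C_8 n^{\frac{rt-1}{2rt+2r}}$ is slightly more careful than the paper's ``$d=C_8n^{\cdots}$''.
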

\begin{proof}
Let $y \in Y^*_2$. Then, by definition there exists a good $V \in [V(G)]^t$ such that
$|(\J_2^*)_{U \vee V}| \geq \frac{1}{12}|\G^{(2)}_U| n^{- t}$ and 
$y$ is the image of $y_r$ for every $F \in (\J_2^*)_{U \vee V}$. Since $|\G^{(2)}_U| \geq \eta |W| d^{2t + 2}$, we have that $|(\J_2^*)_{U \vee V}|  \geq \frac{\eta}{12} |W| d^{2t + 2}n^{-t}$. 

On the other hand by Claim~\ref{claim:multiplicity}, the number of members of $(\J^*_2)_{U\vee V}$ that
map $wx_ry_r$ to $vxy$ is at most $L^{4(t + 1)^2(r - 1) + 4t}$. Thus, the inequality 
\[|\cP_y|L^{4(t + 1)^2(r - 1) + 4t} \geq \frac{\eta}{12} |W| d^{2t + 2} n^{-t}\] holds.
Solving this for $|\cP_y|$ and using $d=C_{8}n^{\frac{rt-1}{2rt+2r}}$, we find $|\cP_y|\geq C_{6} |W|n^{-\frac{1}{r}}$,
using that $C_8$ is sufficiently large compared to $C_6$.
\end{proof}

We seek now to apply Lemma~\ref{lem:spiders-to-graph} to $\Pc$ to find a copy of $H_{r, t}^{\ell}$. 
By Claim~\ref{claim:py-lower}, we have that we can take $M = C_{6} |W| n^{- \frac{1}{r}}$. Combining this with Claim~\ref{claim:P-lower}, we have that $|\cP| M^{t} \geq \gamma C_{6}^t d^2 |W|^{t + 1} n^{-\frac{  t}{r}}$. This will be greater than $C_{3} |W|^{t + 1}$ by our assumption $C_{6}$ is sufficiently large in terms of $C_{3}$ and that $d^2 \geq n^{t / r}$ for $r \geq t + 2$. Since members of $\Pc$ are $2$-paths contained in members of $\J_2^*$,
by definition of $\J^*_2$ if we fix the first two vertices of any member of $\Pc$, this initial segment extends 
to at most $|W|n^{- \frac{1}{r} -\beta}$ members of $\Pc$.  Thus, we can take $\lambda = \frac{1}{C_{6}} n^{- \beta}$. 

Therefore, as  $ \lambda^{-1} M \geq C_6^2 n^{\beta - \frac{1}{r}} |W| \geq C_{3} |W|$ for $\beta \geq \frac{1}{r}$ and $C_{6}$ sufficiently large in terms of $C_{3}$, we thus may apply Lemma~\ref{lem:spiders-to-graph} 
(with $H=F_{r,t}^\ell$) to find our copy of $H_{r, t}^{\ell}$, as desired. 
\bigskip

{\bf Subcase 2.3} $m=3$.

\medskip
Let $B$ be the bipartite graph with parts $W$ and $X^*_3$ whose edges are images of $wx_r$ over members of $\J^*_3$.

\begin{claim}
$$e(B) \geq \frac{\eta}{3  L^{4(t+1)^2 (r-1) + (2t+1)^2} K^{2t + 1}} |W| d.$$
\end{claim}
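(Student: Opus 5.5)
We double count the members of $\J^*_3$: each member contributes the image of its edge $wx_r$ as an edge of $B$, and each edge of $B$ can carry only boundedly many members. Comparing a lower bound on $|\J^*_3|$ with the number of members per edge gives a bound of the required shape. The extra factor $L^{(2t+1)^2}$ in the stated denominator then absorbs the constant lost along the way, turning $\eta/12$ into $\eta/3$.

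\emph{Lower bound on $|\J^*_3|$.} By Claim~\ref{claim:GU-lower}, $|\G^*_U|\geq \frac{\eta}{2}|W|d^{2t+2}$. Since $m=3$ is the index of the largest family, $|\J_3|\geq \frac13|\G^*_U|$. A vector $V$ that is not good contributes fewer than $\frac16|\G^*_U|n^{-t}$ members to $\J_3$. There are at most $n^t$ vectors $V\in[V(G')]_t$, so the non-good vectors together contribute at most $\frac16|\G^*_U|$ members. Hence
\[
|\J^*_3|\geq \tfrac13|\G^*_U|-\tfrac16|\G^*_U|=\tfrac16|\G^*_U|\geq \tfrac{\eta}{12}|W|d^{2t+2}.
\]

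\emph{Upper bound via multiplicity.} Every member of $\J^*_3\subseteq \G^*_U$ maps $wx_r$ onto some edge $vx$ of $B$ with $v\in W$ and $x\in X^*_3$. By part 2 of Claim~\ref{claim:multiplicity}, at most $(Kd)^{2t+1}L^{4(t+1)^2(r-1)}$ members of $\G^*_U$ map $w$ to $v$ and $x_r$ to $x$. Therefore
\[
e(B)\,(Kd)^{2t+1}L^{4(t+1)^2(r-1)}\geq |\J^*_3|\geq \tfrac{\eta}{12}|W|d^{2t+2},
\]
which gives
\[
e(B)\geq \frac{\eta}{12K^{2t+1}L^{4(t+1)^2(r-1)}}|W|d.
\]

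\emph{Matching the stated constant.} We have $L\geq 2$ and $(2t+1)^2\geq 2$, so $L^{(2t+1)^2}\geq 4$. Hence
\[
\frac{\eta}{12K^{2t+1}L^{4(t+1)^2(r-1)}}\geq \frac{4\eta}{12K^{2t+1}L^{4(t+1)^2(r-1)+(2t+1)^2}}=\frac{\eta}{3L^{4(t+1)^2(r-1)+(2t+1)^2}K^{2t+1}},
\]
which is exactly the claimed inequality.

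The only delicate point is the counting of non-good $V$ in the lower bound. It must be charged against $|\G^*_U|$, which is the quantity appearing in the definition of good, so that the loss is exactly $\frac16|\G^*_U|$ and not larger.
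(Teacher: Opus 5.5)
Your proof is correct and is the same double-counting argument the paper uses: lower-bound $|\J^*_3|$ via Claim~\ref{claim:GU-lower} and the goodness threshold, then divide by the per-edge multiplicity from part 2 of Claim~\ref{claim:multiplicity}. Your constant bookkeeping is more careful than the paper's. The paper asserts $|\J^*_3|\geq \frac13|\G^*_U|\geq\frac{\eta}{3}|W|d^{2t+2}$, whereas the bound that actually holds is $\frac{\eta}{12}|W|d^{2t+2}$, as you found; the paper then implicitly relies on the superfluous factor $L^{(2t+1)^2}$ in its multiplicity bound to absorb the difference, which you do explicitly.
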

\begin{proof} We have $|\J^*_3|\geq \frac{1}{3}|\G_U^*| \geq \frac{\eta}{3} |W| d^{2t + 2}$. Consider any edge
$vx\in E(B)$, where $v\in W$ and $x\in X^*_3$.
By Claim~\ref{claim:multiplicity}, there are no more than $(Kd)^{2t+1}L^{4(t+1)^2 (r-1)+(2t+1)^2}$ many members of $\G_U^*\supseteq \J^*_3$ that map $w$ to $v$ and $x_r$ to $x$. Hence, 

$$e(B) \cdot (Kd)^{2t+1}L^{4(t+1)^2 (r-1)+(2t+1)^2} \geq \frac{1}{3}  \eta |W| d^{2t + 2}, $$
and so the claim follows. 
\end{proof}

By definition, $\delta_{X^*_3} \geq n^{-\beta- \frac{1}{r}}|W|$. Furthermore, as $X^{^*}_3$ is nonempty, $\delta_{X^*_3} \geq C_{2}$.  Thus, we have

\[e(B)\cdot \delta_{X_3^*} \geq \frac{\eta}{3  L^{4(t+1)^2 (r-1) + (2t+1)^2} K^{2t + 1}} dn^{-\beta - \frac{1}{r}} |W|^2 \geq C_{2}|W|^2, \]
as long as $d \geq C_{2} \eta^{-1} L^{4(t+1)^2 (r-1) + (2t+1)^2} K^{2t + 1} n^{\beta +  \frac{1}{r}}$, which holds as long as $C_{8}$ is sufficiently large and $\frac{rt-1}{2rt+2r}\geq \frac{2}{r}$, which is guaranteed if
$rt \geq 4t + 5$, which is guaranteed by our assumptions of the theorem.
By Lemma~\ref{lem:subdiv-lem1}, $B$ contains $H_{r, t}^{\ell}$, contradicting $G$ being $H_{r, t}^{\ell}$-free. 


\vskip 60pt

\subsection{ The case where $|\cH_3|\geq \frac{1}{3} |\cH|$}

\stepcounter{theorem}
\bigskip

We pass from $\cH_3$ to $\cH^*$ by letting $\cH^*$ be the subfamily of $\cH_3$ defined by the union over $Z \in [V(G)]_{rt}$ where $|(\cH_3)_Z| \geq \frac{1}{6} |\cH|n^{-rt} \geq C_{7}$ for $C_{8}$ sufficiently large in terms of $C_{7}$. Note that by ~\eqref{eq:F1-lower},
\begin{equation}\label{eq:H*-lower}
|\cH^*| \geq \frac{1}{6}|\cH| \geq \frac{1}{12}|\F^{(1)}|\geq\frac{n}{24} \left( \frac{d}{2} \right)^{2r + 2rt}.
\end{equation}
Recall the definition of $D$-shadows from Definition~\ref{defn:D-shadow} and strong family from 
Definition~\ref{def:strong}.

We say a proper subtree $D \subseteq F'_{r, t}$ is \textit{nice} if $D$ is obtained from
$F'_{r,t}$ by deleting at most one $z_{i,j}$ for each $i\in [r]$. In particular, $D$ has $rt$ leaves. Recall that $\lambda = \frac{1}{(rt + 1 + 2r)^2\ell}$.

\begin{claim}\label{claim:iterative}
Let $D$ be a nice subtree of $T'_{r,t}$ with $2rt+2r-p$ edges, where $0\leq p\leq r-1$.
Let $U\in [V(G')]_{rt}$.
Suppose  $\D$ is a subfamily of members of $\partial_D(\cH_Z^*)$ that have
leaf vector $U$ and  $|\D| \geq \lambda^{p} |\cH^*_Z|$.
Then, either
\begin{enumerate}
\item The family $\D$ is $L$-strong, or 
\item There exists a nice subtree $D'$ of $T'_{r,t}$ contained in $D$
 with $e(D') = e(D) - 1$, $U' \in [V(G)]_{rt}$, and a subfamily $\D'$ of members of
 $\partial_{D'}(\cH^*_Z)$ such that 
 every member of $\D'$ has leaf vector $U'$ and $|\D'| \geq \lambda|\D| $.
\end{enumerate}

\end{claim}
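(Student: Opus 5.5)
The plan is to apply Lemma~\ref{lem:anchored} to $\D$ and use the fact that $\cH_Z$ is of type 3 to rule out anchors at $x_i$ and $y_i$. Suppose $\D$ is not $L$-strong. With $\lambda=\frac{1}{(rt+1+2r)^2\ell}$ (and $\ell$ chosen so that $\lambda\le \frac{1}{(v(D)-rt)^2L}$, since $v(D)-rt\le 2r+rt+1$), Lemma~\ref{lem:anchored} gives a non-leaf $u$ of $D$ and a vertex $v$ such that at least $\lambda|\D|$ members of $\D$ map $u$ to $v$. There are three possibilities for $u$.

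\emph{The anchor is at $w$, $x_i$ or $y_i$.} The $x_i$ and $y_i$ cases must be excluded using type 3, and the $w$ case by lightness. Each member of $\D$ is the $D$-shadow of some members of $\cH^*_Z$. Lift the anchored subfamily back: the members of $\cH^*_Z$ whose $D$-shadow lies in the anchored part. I would argue that each member of $\D$ has roughly the same number of preimages, so these lifts form a $\lambda$-fraction of the lift of $\D$. That lift has size at least $\lambda^p|\cH^*_Z|$, because $\D\subseteq\partial_D(\cH^*_Z)$ and each shadow has at least one preimage. Since $p\le r-1$, we would get an anchor for $\cH_Z$ with proportion at least $\lambda^{r}\ge\alpha$ (choosing $\alpha\ll\lambda$). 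If $u\in\{x_i,y_i\}$, this contradicts $\cH_Z$ being of type 3. If $u=w$, at most $L^{O(rt^2)}$ members map $w$ to $v$, because all members avoid $L$-heavy $S_{t+1}$'s and $2$-paths; this is much smaller than $\lambda^r|\cH^*_Z|\ge\lambda^r C_7$ once $C_7$ is large.

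The main obstacle is the lifting step. Members of $\D$ may have very different numbers of preimages in $\cH^*_Z$, so the proportion does not transfer directly. To handle this, I would reformulate the hypothesis with multiplicities: regard $\D$ as the multiset of shadows of a subfamily $\cH'\subseteq\cH^*_Z$ of size at least $\lambda^p|\cH^*_Z|$, and apply the pigeonhole argument of Lemma~\ref{lem:anchored} to $\cH'$ weighted by shadow. Concretely, take a maximal collection of fewer than $L$ internally disjoint members of $\D$ and its vertex set $S$; every member of $\cH'$ meets $S$ in a non-leaf of $D$. This gives $|\cH'|/((v(D)-rt)^2L)$ members of $\cH'$ mapping some $u$ to some $v$, which is exactly what the type-3 contradiction needs. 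Verifying that this multiset formulation matches how the claim is later iterated is the delicate point.

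\emph{The anchor is at some $v_{i,j}$.} Fix such $u=v_{i,j}$, vertex $v$, and the anchored subfamily $\D_0$ with $|\D_0|\ge\lambda|\D|$. Because we are working in $\cH_3$, where the $z$'s are leaves, the anchor $u$ is of this form. Set $D'=D-z_{i,j}$. Here we need $D$ not already missing a $z_{i,j'}$; if it were, then $v_{i,j'}$ would be a leaf of $D$, not $v_{i,j}$, and one must check that $D'$ is still nice. Counting edges, $e(D')=e(D)-1$. Let $U'$ be $U$ with the image of $z_{i,j}$ replaced by $v$, reordered so that $v_{i,j}$ takes the leaf position of $z_{i,j}$. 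Take $\D'=\{F[D']:F\in\D_0\}$; all members have leaf vector $U'$, since $v_{i,j}$ is mapped to $v$. Each member of $\D'$ comes from at most $L^{O(1)}$ members of $\D_0$, by lightness of the $2$-path $y_iv_{i,j}z_{i,j}$, or in the weighted version just by keeping multiplicities. Hence $|\D'|\ge\lambda|\D|$, up to constants absorbed into $\lambda$, and alternative 2 holds.
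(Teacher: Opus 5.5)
\textbf{Gap: the anchor at $v_{i,j}$ when branch $i$ has already lost a leaf.} Your overall structure is the paper's: apply Lemma~\ref{lem:anchored}, rule out $w$, $x_i$, $y_i$, and shrink $D$ at a $v_{i,j}$-anchor. But your last case has a real gap. You set $D'=D-z_{i,j}$ and note this needs branch $i$ of $D$ to be intact, yet you never handle the case where it is not.

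That case really occurs. Suppose $i\le p$ and $z_{i,1}$ is already gone. Then only $v_{i,1}$ has become a leaf, while $v_{i,2},\dots,v_{i,t}$ (recall $t\ge 2$) are still non-leaves. So Lemma~\ref{lem:anchored} can return $u=v_{i,j}$ with $j\ge 2$. Now $D-z_{i,j}$ is missing two $z$'s from branch $i$, so it is not nice. Niceness cannot be dropped: the claim requires it, and Lemma~\ref{lem:strong-subtree-lemma} and the final count only cover nice subtrees.

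The missing idea is to show this case is impossible, again via type 3. In every member of the anchored subfamily, the $2$-path $v_{i,1}\,y_i\,v_{i,j}$ has both endpoints fixed: $v_{i,1}$ is a leaf of $D$, so its image is given by $U$, and $v_{i,j}$ is sent to $v$. Members of $\cH^*\subseteq\F^{(1)}$ contain no $L$-heavy $2$-path, so $y_i$ has at most $L^4$ possible images. Hence some vertex $y$ is the image of $y_i$ for at least $L^{-4}\lambda|\D|\ge L^{-4}\lambda^{r}|\cH^*_Z|$ members. So $y$ is an $(\alpha,y_i)$-anchor of $\cH_Z$ as soon as $\alpha\le \lambda^r L^{-4}$, contradicting type 3. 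Only when branch $i$ is intact may you set $D'=D-z_{i,j}$.

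\textbf{The lifting step is not an obstacle.} The step you call the main obstacle goes through directly, and the multiset reformulation is unnecessary. Every member of $\cH^*_Z$ has leaf vector $Z$, and a nice $D$ omits only some $z_{i,j}$'s, whose images are fixed by $Z$. So $F\mapsto F[D]$ is injective on $\cH^*_Z$, and every shadow has exactly one preimage. Even without this, $|\A|\ge\lambda|\D|\ge\lambda^{p+1}|\cH^*_Z|$ distinct shadows mapping $u$ to $v$ lift to at least that many distinct members of $\cH^*_Z$. That is all the type-3 contradiction needs.

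Likewise, in the $v_{i,j}$ case, restricting to $D'$ is injective on the anchored subfamily, because the image of $z_{i,j}$ is fixed by $U$. So $|\D'|\ge\lambda|\D|$ holds exactly, with no $L^{O(1)}$ loss to absorb.

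\textbf{Two small points.} First, the $w$-case bound needs lightness of $S^-_{t+1}$ as well: branches with a deleted $z$ are copies of $S^-_{t+1}$, not $S_{t+1}$, and $2$-path lightness alone does not pin down $y_i$ there. Second, the constraint from Lemma~\ref{lem:anchored} is on $\lambda$ itself, namely $\lambda\le 1/((rt+2r+1)^2L)$. The $\ell$ in the paper's formula for $\lambda$ should be $L$; $\ell$ is not something you choose.
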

\begin{proof}
If the family $\D$ is $L$-strong, we are done. So suppose on the other hand that the family $\D$ is not $L$-strong. Then by Lemma~\ref{lem:anchored}, there exists a vertex $u \in V(D)$ and a vertex $v \in V(G)$ such that $v$ is a $(\lambda, u)$-anchor for $\D$. Let $\A$ denote the subfamily of members of $\D$ that map $u$ to $v$.
Then $|\A|\geq \lambda |\D|\geq \lambda^{2rt+2r+1-e(D)}|\cH^*_Z|$.

If $u$=$x_i$ or $y_i$ for some $i\in [r]$, then it would follow that the family $\cH^*_Z$ has an $(\alpha, x_i)$ or $(\alpha, y_i)$-anchor respectively, contradicting members of $\cH^*_Z$ being of type 3. Indeed, if for example $u$ were a $(\lambda, x_i)$-anchor for $\D$, then it would be a $(\lambda^{2rt+2r - e(D) + 1}, x_i)$-anchor for $\cH^*_Z$. 

Suppose next $u = w$. Then, since $\cH^*_Z$ contains no $L$-heavy $S_{t+1}$ or $S_{t + 1}^-$, we have that $|\A| \leq  L^{4(t + 1)^2r} < \lambda^{2rt+2r+1-e(D)}|\cH^*_Z|$,  as $|\cH^*_Z| \geq C_{7}$ is sufficiently large in terms of $L, \lambda$. Hence, $u\in \bigcup_{i=1}^r\bigcup_{j=1}^t \{v_{i,j}\}$.

Without loss of generality, suppose $D=T'_{r,t}-\{z_{1,1},z_{2,1},\dots, z_{p,1}\}$. Suppose first that
$u\in \bigcup_{i=p+1}^r \bigcup_{j=1}^t \{v_{i,j}\}$. Without loss of generality, suppose $u=v_{p+1,1}$.
Let $D'=D-\{z_{p+1,1}\}$. Let $\D' = \partial_{D'}(\A)$ and $U'$ being common leaf vector of members of $\D'$.
Then the claim holds. Hence, we may assume that $u\in \bigcup_{i=1}^p \bigcup_{j=2}^t \{v_{i,j}\}$. Without loss of generality,
suppose $u=v_{1,2}$. 

Since members of $\A$ contain no $2$-paths that are $L$-heavy in $G'$, there are at most $L^4$ different images of $y_1$ over all
members of $\A$. Hence at least $L^{-4}|\A|$ members of $\A$ map $y_1$ to the same vertex $y$.
Thus $y$ is an $( \lambda^{2rt+2r - e(D) + 1}L^{-4}, y_1)$-anchor, and thus an $(\alpha, y_1)$-anchor for $\cH^*_Z$, contradicting members of $\cH^*_Z$ being of type 3. 
\end{proof}

\begin{claim} \label{claim:strong-subtree-containment}
For every $Z\in [V(G')]_{rt}$ with $\cH^*_Z \neq \emptyset$, there exists a tuple $U \in [V(G)]_{rt}$ and a nice subtree $D$ of $T'_{r,t}$ such that
the subfamily $\D$ of members of $\partial_D(\cH_Z^*)$ that have leaf vector $U$ is $L$-strong and $|\D| \geq \lambda^r |\cH^*_Z|$.  In particular, at least $\lambda^r |\cH^*_Z|$ members of $\cH^*_Z$ contain an $L$-strong nice subtree.
 \end{claim}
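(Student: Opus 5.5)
The plan is to iterate Claim~\ref{claim:iterative}, starting from the full tree and peeling off at most one leaf $z_{i,j}$ per branch. We start with $D_0=T'_{r,t}$, which is nice with $p=0$, and $U_0=Z$. We take $\D_0=\partial_{D_0}(\cH^*_Z)$ restricted to leaf vector $Z$; this is just $\cH^*_Z$, so $|\D_0|=|\cH^*_Z|\geq \lambda^0|\cH^*_Z|$.

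The iteration runs as follows.
\begin{enumerate}
\item At stage $p$ we have a nice $D_p$ with $e(D_p)=2rt+2r-p$, a vector $U_p$, and a family $\D_p$ of $D_p$-shadows with common leaf vector $U_p$ and $|\D_p|\geq \lambda^p|\cH^*_Z|$.
\item We apply Claim~\ref{claim:iterative} to this data.
\item If alternative~1 holds, $\D_p$ is $L$-strong and we stop. Since $p\le r$ and $\lambda<1$, we get $|\D_p|\ge\lambda^r|\cH^*_Z|$.
\item Otherwise alternative~2 gives $D_{p+1}$, $U_{p+1}$ and $\D_{p+1}$ with $|\D_{p+1}|\geq\lambda|\D_p|\geq \lambda^{p+1}|\cH^*_Z|$, and the induction continues.
\end{enumerate}
The hypothesis of Claim~\ref{claim:iterative} requires $p\le r-1$. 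So we must check that the process cannot pass stage $r$ without stopping. That is, we must show that at stage $p=r$ the family is necessarily $L$-strong.

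This termination step is the main obstacle. Suppose we reach $D_r$. Its construction removed exactly one $z_{i,j}$ for each $i\in[r]$: the proof of Claim~\ref{claim:iterative} always removes a $z$ from a new branch $i>p$, and any other anchor location led to a contradiction. We then redo the analysis in that proof. If $\D_r$ is not $L$-strong, Lemma~\ref{lem:anchored} yields a $(\lambda,u)$-anchor $u$ for $\D_r$.
\begin{itemize}
\item If $u$ is $x_i$ or $y_i$, then $\cH^*_Z$ has an $(\alpha,x_i)$- or $(\alpha,y_i)$-anchor, because $\lambda^{r+1}\ge\alpha$ by the choice $\alpha\ll f_1(r,t,\ell,L)$. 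This contradicts type~3.
\item If $u=w$, this is impossible by the lightness bound $L^{4(t+1)^2r}<\lambda^{r+1}|\cH^*_Z|$, using $|\cH^*_Z|\ge C_7$.
\item If $u=v_{i,j}$, the branch $i$ already has a deleted leaf. By $L$-lightness of $2$-paths we get a $(\lambda^{r+1}L^{-4},y_i)$-anchor, again contradicting type~3. The deleted position itself is $v_{i,1}$, now a leaf of $D_r$, so it cannot be a non-leaf anchor.
\end{itemize}
Hence $\D_r$ is $L$-strong. Equivalently, one could note that Claim~\ref{claim:iterative}'s proof works verbatim for $p=r$, with alternative~2 impossible since no branch remains.

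For the ``in particular'' statement we use the final family $\D$, with $|\D|\geq\lambda^r|\cH^*_Z|$. Each member of $\D$ is the $D$-shadow of at least one member of $\cH^*_Z$, and distinct members of $\D$ come from distinct members of $\cH^*_Z$. Since $\D$ is an $L$-strong family with common leaf vector $U$, each member's leaf vector $U$ is $(D,L)$-linked, so each member is an $L$-strong copy of the nice tree $D$. Therefore at least $|\D|\ge\lambda^r|\cH^*_Z|$ members of $\cH^*_Z$ contain an $L$-strong nice subtree.
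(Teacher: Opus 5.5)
Your proposal is correct and follows the paper's proof: you iterate Claim~\ref{claim:iterative} starting from $D_0=T'_{r,t}$ and $\D_0=\cH^*_Z$, losing a factor $\lambda$ per step until the shadow family is $L$-strong. Your explicit check at stage $p=r$ reruns the anchor case analysis, where no fresh branch remains, to show the family must already be $L$-strong; this fills in the termination step that the paper only asserts from the edge count of nice subtrees.
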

 \begin{proof}

 We  will iteratively apply Claim~\ref{claim:iterative} to find the tuple $U \in [V(G)]_{rt}$ and nice subtree $D$ of $T'_{r,t}$ such that the subfamily $\D$ of members of $\partial_D(\cH_Z^*)$ that have leaf vector $U$ is $L$-strong.
Let $U_0 = Z$ and $D_0 = T_{r, t}'$, and $\D_0 = \cH_Z^*$. Observe that the family $\D_0$ satisfies the conditions of Claim~\ref{claim:iterative} with $D = D_0$ and $U = U_0$. In general, let $0\leq j\leq r-1$. Suppose $D_j$ is a nice subtree of
$T'_{r,t}$ with $2rt+2r-j$ edges and $\D_j$ is a subfamily of members of $\partial_D(\cH_Z^*)$ that have the same
leaf vector $U_j$ and  $|\D_j| \geq \lambda^{j} |\cH^*_Z|$. If $\D_j$ is $L$-strong, then the lemma holds. Otherwise, 
we apply Claim~\ref{claim:iterative} to obtain a nice subtree $D_{j+1}$ with $2rt+2r-j-1$ edges and a subfamily $\D_{j+1}$
of $\partial_{D_{j+1}}(\cH^*_Z)$ with the same leaf vector $U_{j+1}$ such that $|\D_{j+1}|\geq \lambda^{j+1}|\cH^*_Z|$.
Since every nice subtree of $T_{r, t}'$ has at least $2rt+2r- r$ edges, the process must terminate with a nice subtree $D_m$
of $2rt+2r-m$ edges and a subfamily $\D_m$ of members of $\partial_{D_m}(\cH^*_Z)$ that have the same leaf vector and has size
at least $\lambda^m|\cH^*_Z|$ for some $m\in [r]$.
\end{proof}

By Claim~\ref{claim:strong-subtree-containment}, for $\lambda^r$ proportion of members $F$ of $\cH^*$,
there is a nice subtree $D$ of $T'_{r,t}$ such that $F[D]$ is $L$-strong in $G'$. 
On the other hand, for each nice subtree $D$ of $T'_{r,t}$,
by Lemma~\ref{lem:strong-subtree-lemma} and the fact that $\Delta(G')\leq Kd$, 
there can be at most $2C_0 nd^{e(D)-1}\cdot (Kd)^{2rt+2r-e(D)}<2C_0K^{2rt+2r} nd^{2rt+2r-1}$
members of $\cH^*$ such that $F[D]$ is $L$-strong. Since there are at most $(t + 1)^r$ choices of $D$,
we have for sufficiently large $n$,
\[|\cH^*| < 2(t + 1)^r \lambda^{-r} C_0K^{2rt+2r} nd^{2rt+2r-1} < \frac{n}{24} \left( \frac{d}{2}\right)^{2rt + 2r},\]
contradicting ~\eqref{eq:H*-lower}. This contradiction completes our proof of Theorem~\ref{thm:main}. 
\end{proof}

\section{Concluding remarks}

\begin{enumerate}
\item Besides the Bukh-Conlon conjecture, another intriguing conjecture posed by Kang, Kim, Liu \cite{KKL} states that if $H$ is a bipartite graph satisfying that $\ex(n,H)=O(n^{1+\alpha})$ then $\ex(n,H')=O(n^{1+\frac{\alpha}{2}})$. As shown by the authors, the conjecture, if true, also implies the rational exponents conjecture. Besides implying the rational exponents conjecture, the conjecture is very interesting on its own. We note that our main theorem
proves the conjecture for the family of rooted powers of height two trees $T_{r,t}$ for $t \geq 2, r\geq 2t + 3$.
The conjecture, however, remains wide open in general. 

\item The {\it $p$-subdivision} of a graph $H$ is obtained from $H$ by replacing edges
of $H$ with internally disjoint paths of length $p$. So in this language, $H'$ is the $2$-subdivision of $H$.
It is likely our method can be used to establish the Bukh-Conlon conjecture for $p$-subdivisions of
$T_{r,t}$ for any even $p$ when $r$ is moderately large compared to $t$.

\end{enumerate}

\end{document}